\documentclass[12pt]{amsart}
\usepackage[latin1]{inputenc}
\usepackage{amsmath, latexsym, amsfonts, amssymb, amsthm, amscd}
\usepackage[left=3cm, right=2.5cm, top=2.5cm, bottom=2.5cm]{geometry}
\usepackage{color}

\newtheorem{corollary}{Corollary}
\newtheorem{proposition}{Proposition}
\newtheorem{lemma}{Lemma}

\newtheorem{theorem}{Theorem}

\renewcommand{\P}{\mathbb{P}}

\newcommand{\R}{\mathbb{R}}

\newcommand{\G}{\mathbb{G}}
\newcommand{\V}{\mathbb{V}}
\newcommand{\T}{\mathcal{T}}
\newcommand{\A}{\mathcal{A}}
\newcommand{\N}{\mathbb{N}}
\newcommand{\X}{\mathcal X}

\newcommand{\Ind}[1]{\mathbf{1}_{\{#1\}}}
\newcommand{\E}{\mathbb{E}}


\def\Bea{\begin{eqnarray*}}
\def\Eea{\end{eqnarray*}} 
\def\bea{\begin{eqnarray}}
\def\eea{\end{eqnarray}} 

\def\cmv#1{\marginpar{\raggedright\tiny{\textcolor{white}{Vincent:}  \textcolor{white}{#1}}}}
\setlength{\marginparwidth}{2cm}

\usepackage{tikz}

\begin{document}

 \title[]{Spine for  interacting populations and sampling}

\author{Vincent Bansaye}
\address{CMAP, \'Ecole Polytechnique, Route de Saclay, F-91128 Palaiseau Cedex, France}
\email{vincent.bansaye@polytechnique.edu}

 \maketitle
%





\begin{abstract} 
We consider Markov jump processes describing structured populations with 
interactions via density dependance. We propose a Markov construction with a distinguished individual which allows to describe
      the random tree and random sample at a given time  via a change of probability. This spine construction involves the extension of type space of individuals to include the state of the population.
 The jump rates  outside the spine are also modified.
We exploit this approach to  study  some issues concerning population dynamics. For single type populations, we derive the diagram phase of a 
 growth fragmentation model with competition and the growth of the size of transient birth and death processes with multiple births.
We also describe the  ancestral lineages of a uniform sample in  multitype populations. 
\end{abstract}

\noindent\textit{\small{Key words: Jump Markov process, random tree, spine, interactions,
positive semigroup, martingales, population models.} }

\section{Introduction}
Spine techniques and size biased trees have a long and fruitful story in branching processes.
They have played a deep role in  the analysis of branching  brownian motion and branching random walk  from the works of Chauvin and Rouault \cite{CR} and
 Lyons \cite{Lyons}. More generally,    spine construction provides a relevant point of view to tackle many issues related to the genealogy and long time behavior of branching processes.
 Lyons, Peres and Pemantle  \cite{LPP} have given a conceptual approach of the famous $L\log L$ criterion involved in the asymptotic analysis of branching processes. Their construction provides an illuminating
proof of the non degenerescence of limiting martingale using  branching processes with immigration.
This Markov construction has been extended to multitype branching process \cite{KLPP}
and    infinite dimension and we refer e.g. to \cite{Athreya, Englander, EHK}. It involves then either an eigenfunction or an exponential additive functional of trajectories corresponding to a Feynman Kac semigroup, both being linked by a change of probability.  This construction is  also involved in the fine estimate of the front 
 of branching random walks, see   Hu and Shi  \cite{HS} and Roberts \cite{Roberts}, thanks to a family of  exponential eigenfunctions
 and Biggins martingale. \\
 Spine constructions give a trajectorial and markovian sense to a typical individual alive at a given time. This  allows to prove   ergodic properties of type distribution among the population and shed light on  sampling \cite{Marguet}. More generally, we refer to the description of  reduced tree and backbone  \cite{HHK, EKW} and multispine construction  \cite{HJR}.   It has finally proved to be a powerful way to analyse the first moment semigroup of branching processes, and more generally non-conservative semigroups or linear  PDEs, see e.g.
\cite{BCGM, BW} and references therein.  \\

These  constructions exploit  the branching property and independence of individuals.
 The
aim of this paper is to propose a spine construction for   dynamics taking into account interactions, through density dependance of individual behaviors and associated \emph{branching events}. Individuals may die  or reproduce or move and individual branching events may depend  on the state of the population. 
Such models are involved in population dynamics or genetics or epidemiology, see e.g. \cite{Kurtz, BM, EK} and forthcoming examples. Indeed, for various models of competition, mutualism, contamination, sexual reproduction or predation,   the individual rate of transition
 depends on the size of some species at a given location. Death or successive reproduction of asexual population may indeed depend on local competition and ressources available, reproduction of females may depend on density of males, contaminations by  infected individuals depends on the local number of susceptibles...  Let us also mention that density dependent models appear in various other contexts, including  chemistry, queueing systems or networks.\\
 Our first motivation here is the study of   population models with competition. The evolution of the distribution of traits among such a population is related to the distribution of a sample. They can both  be tackled via a spine construction. Addario-Berry and Penington \cite{AP}
 have considered a competitive effect in a branching random walk. The authors obtain fine  results on the front of propagation. They   focus  on a peculiar form of competition which enables them to link their model to branching brownian motion.  In a large population approximation where a branching property can be recovered, Calvez, Henry, M\'el\'eard and Tran  \cite{CHMT} describe the ancestral lineage of   a sample in a context of competition and adaptation to a gradual environmental change. The examples considered in this work will involve different scalings.  \\

In recent decades, lots of attention has been paid to the study of genealogical structures of population. For branching processes, the contour (or exploration) process provides a full description of the genealogy. From the work of Aldous and convergence to the continuum random tree, it has been  generalized and used for instance for the description of mutations of splitting trees \cite{Lambert}. The effect of competition as a pruning of trees has been introduced and studied  in  \cite{PW, BFF}.  Spine construction offers a complementary insight in the structure by focusing on a \emph{typical individual} in various senses. It allows for extension to structured population and varying environment.
An other point of view enlightens genealogical structure of 
population models, the look-down construction  introduced by Kurtz and Donnelly. In this construction, a level is added to individuals.  The Poisson representation of this enlarged process allows in particular to build the genealogy of large population approximations and describe the longest branch in the tree. We refer to \cite{KR} for the look-down construction of branching processes and to  \cite{EK} for a recent extension to interacting populations. This latter allows for a description of genealogy and samples by  a tracing  which 
follows the evolution of the levels back in time. We  consider in this paper simpler models and propose a forward Markov consistent construction for samples.\\

The main objective is the study of structured populations : each individual has a type $x\in \mathcal X$, where 
$\mathcal X$ is finite or countable  here. The type can represent 
a size, a location, or any phenotypic or genotypic  trait of the individual.
The population is described by a vector ${\bf z}=({\bf z}_x :  x\in \mathcal X)$ where ${\bf z}_x$ is the number of individuals with type $x$. We write 
$\| {\bf z}\|_1=\sum_{x\in \mathcal X} {\bf z}_x$ the $\ell_1$ norm of   ${\bf z}$
and work with the associated normed  and  countable space 
$$\mathcal Z=\{ {\bf z} \in  \mathbb N_0^{\mathcal X} :\|{\bf z}\|_1<\infty\}.$$
Informally,   each individual of  a   population composed by ${\bf z}$ branches independently and each individual with type $x$ is replaced by an  offspring
 ${\bf k}\in \mathcal Z$ with rate $\tau_{\bf k}(x,{\bf z})$. In other words,
 an individual with type $x$ branches at total rate $$\tau(x,{\bf z})=\sum_{{\bf k}\in\mathcal Z} \tau_{\bf k}(x,{\bf z})<\infty$$
 and is  replaced by ${\bf k}$ with probability $p_{\bf k}(x,{\bf z})= \tau_{\bf k}(x,{\bf z})/ \tau(x,{\bf z})$. The new composition of the population is then  ${\bf z}-{\bf e}(x)+{\bf k}$, where 
 ${\bf e}(x)$ stands for one single individual with trait $x$, i.e.
 ${\bf e}(x)=({\bf e}_y (x), \,  y\in \mathcal X)$ and  ${\bf e}_x(x)=1$ and ${\bf e}_y (x)=0$ for  $y\ne x$. For a reference on density dependent Markov process, let us mention \cite{EKbook, Kurtz}.\\

The spine construction consists in a new process with a distinguished individual and rates are modified using a positive function $\psi$ on $\X\times \mathcal Z$.
Roughly, when the  distinguished individual has type $x$ and lives
in population whose  type composition is ${\bf z}$,
this individual branches, yields ${\bf k}$ offsprings and
switches to type $y$ with rate
$$\widehat{\tau}_{\bf k}^{\, \star}(x,{\bf z})=\tau_{\bf k}(x,{\bf z}) \frac{ {\bf k}_y\, \psi(y,{\bf z}-{\bf e}(x)+{\bf k})}{ \psi(x,{\bf z})}.$$ 
  This rate
 is biased by the size and type of offsprings as for branching structures.
 It is also   corrected by the variation of the population composition  
 through a $\psi$ transform. The jump rates of individuals outside the spine are also modified and they  branch at rate
$$\widehat{\tau}_{\bf k}(y,x,{\bf z})=\tau_{\bf k}(y,{\bf z}) \frac{\psi(x,{\bf z}-{\bf e}(y)+{\bf k})}{ \psi(x,{\bf z})}$$ when their type is $y$ and the type  of the spine is $x$.
We observe that in the case when $\psi$ is not dependent on the state ${\bf z}$ of the population, we recover the construction for branching structures proposed in \cite{LPP,KLPP}.
\\
In the applications considered here, the couple formed by  the typical (or spine) individual and the composition of the population is involved. We will also consider the associated semigroup and martingale. The $\psi$-spine construction  
couples a size bias  and a  Doob  transform  on the product space $\mathcal X\times \mathcal Z$, which will provide a Feynman Kac representation of the semigroup. The spine will describe the lineage of a typical individual, which includes its time of branching, number of offsprings, types...  More information on the underlying genealogy structure may be interesting.  In particular, the tree associated to a sample is needed
when tracing an infected individual in epidemiology or when
looking at the subpopulation carrying a common mutation in
 population genetics.
Our main result   allows to describe  the full tree around the spine. 
Among stimulating open questions is the way multisampling could be obtained,  which will be just briefly evoked here.\\
We focus in this paper on the continuous time setting. The  spine construction achieved  has a counterpart in discrete time, for non-overlapping generations. As far as we see,
the fact that in continuous time branching  events are not simultaneous is actually more convenient for construction and analysis. Besides, models which motivate this work may be 
 more classical or relevant in continuous time. \\

{\bf Example.}
To motivate this construction and illustrate it, let us briefly present  the non-structured case
and refer to Section 3.1 for details. In that case, individuals are exchangeable, i.e. each individual has a common type. When the population size is $z$,  each individual branches and is replaced by $k$ individuals at rate $\tau_k(z)$. In other words, an event occurs inside the population at rate $z\tau(z)$ and then one individual is chosen uniformly at random and is replaced by $k$ individuals with probability $\tau_k(z)/\tau(z)$.
Let us consider the case where the jump Markov process $Z$ on $\N$ counting the number of individuals is well defined for any time (non-explosive) and does become extincted : $\forall t\geq 0, Z(t)\in [1,\infty)$. At a given time $t$, sample uniformly at random one individual alive. Then the times when the ancestral line has branched and the number of offsprings at these times is given (in law) by the $1/z$-spine construction. It means that it can be constructed in the forward sens by considering a Markov process with a distinguished individual (our sample) which  is replaced by $k$ individuals  at rate $k\tau_k(z)(z+k-1)/z$, while the other individuals branch independently and are  replaced by $k$ individuals at rate $\tau_k(z)(z+k-1)/z$. In particular, this result allows to specify how sampling bias  the reproduction of individuals along time and the effect of population size.\\

  {\bf Outline of the paper.}
  The paper is organized as follows. In the next section,  we describe more precisely the $\psi$-spine construction associated to a positive function $\psi$. The main result provides then a Girsanov type result (change of probability) to transform the original random tree with a randomly chosen individual at a give time into a new random tree with a distinguished individual, the spine. We complement this section by considering the associated semigroup and martingale, and a many-to-one formula, which focuses on the ancestral lineage of a typical individual.
The two next sections are devoted to  applications. In Section \ref{dim1}, we consider the single type case. In that case, computations can be achieved. It allows in particular  to describe 
explicitly the uniform sampling at a given time when extinction does not occur.
We exploit and illustrate this construction by considering a simple growth fragmentation process with competition and we determine the criterion of regulation of growth by competition and fragmentation. 
We also provide in this section a $L\log L$ criterion for the non-degenerescence   
of the natural positive martingale associated with the growth of the process, thus extending the criterion of Kesten Stigum and the approach of \cite{LPP}.  In Section \ref{multitype}, 
we consider a population structured by a finite number of types. We describe the ancestral lineage of a uniform sample  when the state space of the population is finite  and the sampling in large population approximation when the limiting process is a differential equation.\\

 In the paper, we write $\N=\{1,2,\ldots\}$, $\N_0=\{0,1,2,\ldots\}=\N\cup\{0\}$. For two vectors ${\bf u}=({\bf u}_x)_{x\in \X}$ and ${\bf v}=({\bf v}_x)_{x\in \X}$, we write
 $\langle {\bf u}, {\bf v}\rangle=\sum_{x\in \X} {\bf u}_x {\bf v}_x$ the inner product.
 \section{Spine construction}
 \label{mainsection}
 \subsection{Definition of the original process}
 \label{model}
We construct the tree of individuals  with their types, until the potential explosion time, as follows. We use the  Ulam Harris Neveu notation to label  the individuals of the population  and each label  has a type and life length.
We thus introduce
$$\mathcal U= \cup_{k\geq 1} \N^k, $$
where $u=(u_1,\ldots,u_k)\in  \N^k$ means that $u$ is  
an individual of the generation $k=\vert u\vert $ and the $u_{k}$-th child  of $(u_1,\ldots, u_{k-1})$.
We consider now a random  process $Z$ and the associated random tree $\mathcal T$
 tree   defined by iteration.
We start  with a population labeled by a non-empty and  finite and  deterministic subset $\mathfrak g$ of $\N$
and their types  are $(x_{u},\,  u\in \mathfrak g)$.  We write
$${\bf x}=\{(u,x_u), \, u \in \mathfrak g\}$$
this initial condition.
We denote by  ${\bf v}\in \mathcal Z$ the vector counting the initial number of individuals of each type:  ${\bf v}_x=\#\{ u \in \mathfrak g : x_u=x\}$.\\
The population alive at time $t$ is a random subset of $\mathcal U$, denoted by $\mathbb G(t)$, and the types of individuals at time $t$ are $(Z_{u},\,  u \in \mathbb G(t))$. The vector counting 
the number of individuals of each type is 
${\bf Z}(t)=({\bf Z}_x (t), \,   x\in \X)$, where ${\bf Z}_x (t)=\#\{ u \in \mathbb G(t) : Z_u=x\}$.
In particular,  ${\bf Z}(0)={\bf v}$.\\
The markovian construction by iteration is classical. Each individual $u$ has a random life length $L_u\in (0,+\infty]$
and a type $Z_u$ during all its life. Each individual with type $x$ is replaced by an  offspring
 whose types are counted by ${\bf k}\in \mathcal Z$ at rate $\tau_{\bf k}(x,{\bf z})$ when the population composition
 is ${\bf z}$.  When an individual $u\in \mathcal U$ is replaced, the new individuals are labeled by $(u,1), \ldots, (u,\| {\bf k} \|_1)$ and the population composition moves to ${\bf z}+{\bf k}-e(x)$. Each new individual has a type and the order of affectation of types
 will play no role. Let us  prescribe a type $Z_{(u,i)}$ to each new label $(u,i)$, for $1\leq i\leq \| {\bf k} \|_1$, and will need do it in coherent way later in the spine construction. Thus, we consider a probability law $Q_{\bf k}$ on 
$$\X_{{\bf k}}=\{{\bf x} \in \X^{\| {\bf k} \|_1} : \, \forall x \in \X, \, \#\{ i \geq 1 : {\bf x}_i=x\}={\bf k}_x\}$$ and 
$(Z_{(u,i)} : 1\leq i \leq \| {\bf k} \|_1)$  is distributed as $Q_{\bf k}$. This affectation  is
achieved independently for each event and its law only depends on the type composition ${\bf k}$. A generic natural law  
is an exchangeable one, choosing successively  the types of individuals uniformly at random among available choices, but models may suggest another one.

The process is constructed iteratively. Writing $T_n$ the successive branching times, the process is   constant in the time intervals $[T_n,T_{n+1})$, where $T_0=0$ and $T_{n+1}=+\infty$ if no event occurs after 
$T_n$. At these times $T_n$, we may say \emph{jump} or \emph{event} or  \emph{branching event}, indifferently. Note that  for any event, only one individual disappears. It may be replaced
by a single individual with a same type (but a different label).
The process is thus well defined until  the limiting time
of successive branching events $(T_n)_{n\geq 1}$: 
$$T_{\text{Exp}}=\lim_{n\rightarrow\infty}T_n\in \R\cup\{+\infty\}.$$
This latter is finite if the sequence of branching events accumulate and as usual, we speak then of \emph{explosion}. 
We write $\mathcal T$ the random tree obtained with this construction and 
$\mathcal T(t)$ the tree truncated at time $t\geq 0$.  Formally $\mathcal T=\{ (u, L_u,Z_u) : 
u \in \mathcal U, \, \exists t \geq 0 \text{ s.t.} \,  u \in \mathbb G(t)\}$ and  $\mathcal T(t)=\{ (u, L_u(t),Z_u) : 
u \in \mathcal U, \, \exists s \leq t \text{ s.t.} \, u \in \mathbb G(s)\}$ with $L_u(t)$ the 
life length of $u$ truncated  at time $t$ .

\subsection{The $\psi$-spine construction} Recall that $\mathcal Z=\{ {\bf z} \in  \mathbb N_0^{\mathcal X} :\|{\bf z}\|_1<\infty\}$ is  denumerable and gives the state space
of the composition of the population. We introduce now the state space for the type of the spine and composition of the population:
$$\overline{\mathcal Z}= \{ (x,{\bf z}) \in  \mathcal X\times \mathcal Z : {\bf z}_x\geq 1\}$$ and consider 
a (fixed) positive  function 
$$\psi: \overline{\mathcal Z}\rightarrow (0,\infty).$$
 We assume in the rest of the paper that for any
$(x,{\bf z})\in  \overline{\mathcal Z}$,
\begin{align}
\label{condsum}
&\sum_{{\bf k}\in \mathcal Z} \tau_{{\bf k}}(x,{\bf z}) \langle {\bf k},\psi(.,{\bf z}+{\bf k}-{\bf e}(x))\rangle <\infty.
\end{align}
Let us construct a new process ${\bf \Xi}^{\psi}={\bf \Xi}$
  and associated genealogical tree $\mathcal A^{\psi}=\mathcal A$. This construction contains   a distinguished individual  $E^{\psi}=E$ for any time. We follow the point of view of \cite{LPP,KLPP} for Galton-Watson processes. 
 We write now  $\mathbb V(t)$ the  random set of individuals alive at time $t$  
and the types of individuals are given by $(\Xi_u, \, u\in  \mathbb V(t))$.
Thus  $ E(t) \in \mathbb V(t)
\subset \mathcal U$ is the label of the spine at time $t$ and  the type of the spine  is then $Y(t)=\Xi_{E(t)}$.  Besides, ${\bf \Xi}_x (t)=\#\{ u \in \mathbb V(t) : \Xi_u=x\}$.

We start with the same  population ${\bf x}=\{(u,x_u), \, u \in \mathfrak g\}$  as  the original process, i.e. initial individuals are labeled by
$\mathfrak g$ and have types $(x_u : u \in \mathfrak g)$  counted by ${\bf v}$. 
The distribution of the initial label of the spine $E(0)$ is 
$$\P(E(0)=e)= \frac{\psi(x_e, {\bf v})}{\langle {\bf v},\psi(., {\bf v})\rangle} \qquad (e\in \mathfrak g). $$
Then  the distribution of the initial type of the spine is   $\P(Y(0)=r)= {\bf v}_r\psi(r, {\bf v})/ \langle {\bf v},\psi(., {\bf v})\rangle.$\\
$\newline$
Among a population whose types are counted by ${\bf z}$, the spine $E$ with type $x$ branches and  is replaced by  offsprings of types ${\bf k}$  at rate 
$$\widehat{\tau}_{\bf k}^{\, \star}(x,{\bf z})=\tau_{\bf k}(x,{\bf z}) \frac{\langle {\bf k},\psi(.,{\bf z}-{\bf e}(x)+{\bf k})\rangle}{ \psi(x,{\bf z})}.$$
The total branching  rate  of  the spine individual is then
 $\widehat{\tau}^{\, \star}(x,{\bf z})=\sum_{{\bf k}\in \mathcal Z}  \widehat{\tau}_{{\bf k}}^{\, \star} (x,{\bf z})$, which is finite by $\eqref{condsum}$.
Labels of offsprings are  $(E(t-),1),\ldots, (E(t-),\| {\bf k} \|_1)$ and their types
are chosen using probability law $Q_{{\bf k}}$ as above. Among these offsprings, each individual with type $y\in \mathcal X$ is chosen to be the spine
with probability
$$q_y({\bf k},{\bf z})=\frac{ \psi(y,{\bf z}-{\bf e}(x)+{\bf k})}{\langle{\bf k},\psi(.,{\bf z}-{\bf e}(x)+{\bf k})\rangle}$$
and provides the new label $E(t)$ of   the  distinguished individual.\\
Outside the spine, i.e. for individuals $u\in \mathbb V(t)-\{E(t)\}$ at time $t$, rates of jumps are  modified as follows.
 Inside a population  ${\bf z}$ with spine of type $x$, the individuals (but  the spine) with type $y$
 branch  and yield offsprings composed by ${\bf k}$ at rate 
$$\widehat{\tau}_{\bf k}(y,x,{\bf z})=\tau_{\bf k}(y,{\bf z}) \frac{\psi(x,{\bf z}-{\bf e}(y)+{\bf k})}{ \psi(x,{\bf z})}.$$
This process with a distinguished particle is constant between successive jumps $\widehat{T}_n$ and $\widehat{T}_{n+1}$, where $\widehat{T}_0=0$ and $\widehat{T}_{n+1}=+\infty$ if no event occurs after 
$\widehat{T}_n$. It is thus also defined by induction until  explosion time 
$$\widehat{T}_{\text{Exp}}=\lim_{n\rightarrow \infty} \widehat{T}_n\in \R_+\cup\{+\infty\},$$ which may be finite or not.\\
The Markovian construction achieved here provides a random tree $\mathcal A$ with a distinguished individual $E$. It  is associated to the original random process
through the rates $(\tau_{\bf k}(x,{\bf z}) :  x\in \mathcal X, {\bf k}\in \mathcal Z, {\bf z}  \in \mathcal Z)$ and the initial type composition ${\bf v}$. It then depends only
 on the choice of the transform $\psi$, which will play a key role.

\subsection{General result}
We introduce 
the linear operator $\mathcal G$ which arise in the spine construction.
For a  function  $f: \overline{\mathcal Z}\rightarrow \R$, we consider  the function
$\mathcal Gf$ on $\overline{\mathcal Z}$ given by
\Bea
\mathcal Gf(x,{\bf z})
&=& \sum_{{\bf k}\in \mathcal Z} \tau_{{\bf k}}(x,{\bf z}) \, \langle {\bf k},f(.,{\bf z}+{\bf k}-{\bf e}(x))\rangle\\
&&+\sum_{y \in \mathcal X,{\bf k} \in \mathcal Z} \tau_{{\bf k}}(y,{\bf z})({\bf z}_y-\delta_y^x)
f(x,{\bf z}+{\bf k}-{\bf e}(y)) -\left(\sum_{y \in \mathcal X}\tau(y,{\bf z}){\bf z}_y\right)f(x,{\bf z}),
\Eea
 where $\delta_y^x$ is the Kronecker symbol ($\delta_y^x=1$ if $y=x$ and $0$ otherwise).  This operator $\mathcal G$ is well defined and will be used
  on the set
$\mathfrak D_\mathcal G$  of positive functions $\psi$ on  $\overline{\mathcal Z}$
which satisfy \eqref{condsum}
and for any $(x,{\bf z})\in \overline{\mathcal Z}$,
$$\sum_{y \in \mathcal X,{\bf k} \in \mathcal Z} \tau_{{\bf k}}(y,{\bf z}){\bf z}_y
\psi(x,{\bf z}+{\bf k}-{\bf e}(y))<\infty.$$
In particular,  $\mathfrak D_\mathcal G$
  contains all the  bounded functions on  $\overline{\mathcal Z}$, which satisfy \eqref{condsum}.
By now, we assume that $\psi$ belongs to
$\mathfrak D_\mathcal G$
and we  define the real valued function 
$$  \qquad   \qquad  \qquad  \lambda=\frac{ \mathcal G\psi}{\psi}   \qquad  \text{on } \quad \overline{\mathcal Z}.\qquad   \qquad$$
Observe that the $\psi$-transform $f\rightarrow \mathcal G(\psi f)/\psi-\lambda f$ 
 yields the generator
of $(Y(t), {\bf \Xi}(t))_{t\geq 0}$.
For any $t\geq 0$, we consider  a random variable $U(t)$ choosing an individual alive at time $t$ among the original population process, when the population is alive. 
Its law is specified by the function $p_e(\mathfrak t)$ which yields the probability
to choose $e$ when the tree is $\mathfrak t$, i.e.
 $$\P( U(t)=e\, \vert\,  \mathcal T(t))=p_e(\mathcal T(t))$$
and $\sum_{e\in \mathbb G(t)} p_e(\mathcal T(t))=1$ a.s. on the event $\mathbb G(t)\ne \varnothing$. Our main interest in this paper is the uniform choice at time $t$, i.e. $\P( U(t)=e\, \vert\,  \mathcal T(t))=p_e(\mathcal T(t))=1/
\# \mathbb G(t)=1/ \| {\bf  Z}(t) \|_1$. But sampling at a given time with a type bias is also relevant for instance.\\
We introduce the  random process $\mathcal W$ associated with the spine construction
$(\mathcal A, E)$  and the  choice $p$:
 $$\mathcal W(t)
 ={\bf 1}_{\widehat{T}_{\text{Exp}}>t}\, \frac{\exp\left(\int_0^t \lambda(Y(s),{\bf \Xi}(s))ds \right)}{\psi(Y(t), {\bf \Xi}(t))} \, p_{E(t)}(\A(t)).$$
We can now state the result and link the random choice of an individual 
among our interacting population to the Markovian spine construction.
Let $\mathbb T$ be the space of finite trees where each nodd has a life length and an $\X$ valued type.
Elements of $\mathbb T$ are   identified to a finite collection of elements of 
$\mathcal U\times (\R_+\cup\{+\infty\} ) \times \X$ 
endowed with the product $\sigma$-algebra. 
\begin{theorem} \label{main} Let 
$\psi\in \mathfrak D_{\mathcal G}$. 
For any $t\geq 0$ and   any measurable non-negative function $F : \mathbb T\times \mathcal U\rightarrow \R$ :
$$\E_{{\bf x}}\left( \Ind{T_{\emph{Exp}}>t, \, \mathbb G(t)\ne \varnothing} \, F(\T(t),U(t))\right)=\langle  {\bf v},\psi(., {\bf v})\rangle\, \E_{{\bf x}}\left(\mathcal W(t) \, F(\A(t), E(t))\right).$$
\end{theorem}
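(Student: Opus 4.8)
The plan is to produce an explicit change-of-measure identity by comparing, realisation by realisation, the joint law of $(\T(t),U(t))$ coming from the original process with the joint law of $(\A(t),E(t))$ coming from the $\psi$-spine process. Since $\{T_{\text{Exp}}>t\}$ entails finitely many branching events, and since $\mathcal W(t)=0$ on $\{\widehat T_{\text{Exp}}\le t\}$, it suffices to prove the identity after intersecting the left-hand expectation with $\{T_n\le t<T_{n+1}\}$ and the right-hand one with $\{\widehat T_n\le t<\widehat T_{n+1}\}$, and then sum over $n\ge0$. On such an event a realisation is coded by the ordered jump times $0<s_1<\cdots<s_n\le t$ and, at each jump $i$, the label $u^{(i)}$ of the branching individual, its type $y_i$, the offspring vector ${\bf k}^{(i)}\in\mathcal Z$ and the type assignment $\vec z^{(i)}\in\X_{{\bf k}^{(i)}}$ of the new offspring; together with the marked leaf $e\in\mathbb G(t)$. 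Original realisations with a marked leaf $e$ and spine realisations are in bijection via $e\leftrightarrow E(t)$: given the original data and $e$, the initial spine is the ancestor of $e$ in $\mathfrak g$, the jumps at which the spine branches are exactly those on the ancestral line of $e$ (the spine at any time $s\le t$ being an ancestor of $E(t)$), and at such a jump the next spine is the offspring that is the next ancestor of $e$. First I would fix the reference measure (Lebesgue measure on $\{0<s_1<\cdots<s_n\le t\}$ times counting measure on the discrete choices) and compute both densities with respect to it, abbreviating $R({\bf z})=\sum_y\tau(y,{\bf z}){\bf z}_y$ (finite since each $\tau(y,{\bf z})<\infty$ and $\|{\bf z}\|_1<\infty$).

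Conditioning successively on the past before each jump, the density of $(\T(t),U(t))$ on $\{T_n\le t<T_{n+1}\}$ is $p_e(\T(t))\prod_{i=1}^n\big(\tau_{{\bf k}^{(i)}}(y_i,{\bf Z}(s_{i-1}))\,Q_{{\bf k}^{(i)}}(\vec z^{(i)})\big)\exp(-\int_0^t R({\bf Z}(s))\,ds)$, the exponential collecting the no-jump probabilities on the successive intervals (with $s_0=0$, ${\bf Z}(0)={\bf v}$). Writing $Y(\cdot)$ for the piecewise constant spine type and $J\subseteq\{1,\dots,n\}$ for the jumps on the ancestral line of $e$, the density of $(\A(t),E(t))$ on $\{\widehat T_n\le t<\widehat T_{n+1}\}$ factorizes as the initial weight $\psi(Y(0),{\bf v})/\langle{\bf v},\psi(.,{\bf v})\rangle$, times, for $i\notin J$, the rate $\widehat\tau_{{\bf k}^{(i)}}(y_i,Y(s_{i-1}),{\bf Z}(s_{i-1}))$, times, for $i\in J$, the rate $\widehat{\tau}^{\,\star}_{{\bf k}^{(i)}}(Y(s_{i-1}),{\bf Z}(s_{i-1}))$ times the selection probability $q_{Y(s_i)}({\bf k}^{(i)},{\bf Z}(s_{i-1}))$, times the assignments $Q_{{\bf k}^{(i)}}(\vec z^{(i)})$, times the holding-time factor $\exp(-\int_0^t\widehat R(Y(s),{\bf Z}(s))\,ds)$, where $\widehat R(x,{\bf z})$ is the total outgoing rate of the spine process from $(x,{\bf z})$.

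Three elementary facts then finish the argument. First, summing the spine rates gives $\widehat R(x,{\bf z})\psi(x,{\bf z})=\mathcal G\psi(x,{\bf z})+R({\bf z})\psi(x,{\bf z})$, hence $\widehat R=R+\lambda$ (and $\widehat R\ge0$, since $\widehat R\psi$ is a sum of non-negative terms); so the ratio of the two holding-time exponentials equals $\exp(\int_0^t\lambda(Y(s),{\bf\Xi}(s))\,ds)$. Second, and this is the crux, in each jump factor of the spine density the dependence on $\psi$, once the spine rate has been multiplied by $q$, is exactly $\psi(Y(s_i),{\bf Z}(s_i))/\psi(Y(s_{i-1}),{\bf Z}(s_{i-1}))$: for $i\notin J$ this is the definition of $\widehat\tau$ (the spine type being unchanged at a non-spine jump), and for $i\in J$ the numerator $\langle{\bf k}^{(i)},\psi(.,{\bf Z}(s_i))\rangle$ produced by $\widehat{\tau}^{\,\star}$ cancels the denominator of $q$. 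As the spine type and the population composition are constant between consecutive jumps, the product over $i$ telescopes to $\psi(Y(t),{\bf\Xi}(t))/\psi(Y(0),{\bf v})$, whose denominator cancels the initial weight. Third, the remaining $\tau$- and $Q$-factors coincide termwise with those of the original density.

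Dividing, on $\{\widehat T_n\le t<\widehat T_{n+1}\}$ the density of $(\T(t),U(t))$ equals that of $(\A(t),E(t))$ times $\langle{\bf v},\psi(.,{\bf v})\rangle\,p_{E(t)}(\A(t))\exp(\int_0^t\lambda(Y(s),{\bf\Xi}(s))\,ds)/\psi(Y(t),{\bf\Xi}(t))=\langle{\bf v},\psi(.,{\bf v})\rangle\,\mathcal W(t)$. Integrating $F$ against these densities and summing over $n\ge0$ gives the theorem (note $\mathbb V(t)\ne\varnothing$ automatically, the spine being immortal, so the right-hand measure is supported on the survival event). The step I expect to require the most care is the measure-theoretic bookkeeping just described --- the bijection between realisations, the Ulam--Harris labels and the $Q_{\bf k}$-assignments of offspring types --- together with checking that \eqref{condsum} and $\psi\in\mathfrak D_{\mathcal G}$ make $\widehat{\tau}^{\,\star}$, $\widehat R$ and both densities well-defined and finite, so that the interchange of sum and integral is legitimate.
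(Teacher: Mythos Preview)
Your proposal is correct and is essentially the same proof as the paper's: the paper packages the density comparison as an induction on the number $n$ of branching events (its Lemma~\ref{aveclesmains}), conditioning on $\mathcal F_{T_{n-1}}$ at each step, while you write out the full likelihood on $\{T_n\le t<T_{n+1}\}$ at once; both rest on the same two identities you isolate, namely $\widehat R=R+\lambda$ for the holding-time exponentials and the telescoping $\psi(Y(s_i),{\bf Z}(s_i))/\psi(Y(s_{i-1}),{\bf Z}(s_{i-1}))$ of the jump factors.
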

In particular,
if $U(t)$ is a uniform choice among the set $\mathbb G(t)$ of individuals  at time $t$,
\begin{align*}
 &\E_{{\bf x}}\left( \Ind{T_{\text{Exp}}>t, \mathbb G(t)\ne \varnothing} \, F(\T(t), U(t))  \right)\\
 &\qquad =\langle  {\bf v},\psi(., {\bf v})\rangle \, \E_{{\bf x}}\left({\bf 1}_{\widehat{T}_{\text{Exp}}>t}\, \frac{e^{\int_0^t \lambda(Y(s),{\bf \Xi}(s))ds}}{\psi(Y(t), {\bf \Xi}(t)) \, \| {\bf  \Xi}(t) \|_1} 
F(\A(t),E(t))\right).
\end{align*}

The proof is a consequence of the following lemma, which encodes the successive
transitions. Recall that  the successive branching times of the original process ${Z}$
and of  the spine process ${\Xi}$  are respectively denoted by  $(T_i, 1\leq  i\leq  N)$, with $N\in \N \cup\{+\infty\}$  and ($\widehat{T}_i, \, 1\leq  i \leq  \widehat{N})$, with $\widehat{N}\in \N \cup\{+\infty\}$ and  $T_0=\widehat{T}_0=0$ a.s. The variable $N \in \mathbb N\cup\{+\infty\}$ yields the total number of branching events
 and $N=i<\infty$ means 
that  the process does not branch after time $T_i$. The same holds for $\widehat{N}$.   \\ 
For $1\leq i\leq N$, we write  $U_i$  (resp. ${\bf K}_i$) the random variable in $\mathcal U$ 
(resp. in $\mathcal Z$) which gives the label of the individual which realizes the 
$i$th branching  events in the original process (resp. the types of its offsprings at this event). We  denote by $(X_{i,j}, j\leq  \|{\bf K}_i\|_1)$ the types of the successive offsprings of $U_i$. In other words, at time $T_i$, the individual $U_i$ is replaced by individuals $(U_i,j)$, for $1 \leq j\leq  \|{\bf K}_i\|_1$, whose types are   $(X_{i,j}, 1\leq j\leq  \|{\bf K}_i\|_1)$.\\
We write similarly $\widehat{U}_i$, $\widehat{{\bf K}}_i$ and  $(\widehat{X}_{i,j}, j\leq  \|\widehat{{\bf K}}_i\|_1)$ the variables involved in the $i$th branching event of the spine construction for $1\leq i\leq N$. Besides, we write $E_i$ the label of the distinguished individual when the $i$th branching event occurs. Thus, if $E_i=\widehat{U}_i$, then $E_{i+1}\ne E_i$ and 
$E_{i+1}=(E_i,j)$ with $1\leq j\leq \|\widehat{{\bf K}}_i\|_1$; otherwise $E_{i+1}= E_i$.
For convenience we write 
$$A_i=\left(U_i,{\bf K}_i, (X_{i,j})_{1\leq j\leq  \|{\bf K}_i\|_1}\right), \qquad \widehat{A}_i=\left(\widehat{U}_i,\widehat{{\bf K}}_i, \,  (\widehat{X}_{i,j})_{1\leq j \leq\|\widehat{{\bf K}}_i\|_1}\right)$$
the discrete variables describing these successive branching events.\\
Let $\mathfrak A_n^{\star}$  be the subset of  non-extincted discrete trees with types in $\X$ and  $n$
internal
nodds (i.e. $n$ branching events) and initial population ${\bf x}=\{(u,x_u), \, u \in \mathfrak g\}$. Each  element of
$a\in \mathfrak A_n^{\star}$  is  a
 finite sequence  $a=(a_i)_{1\leq i \le n}$ which describes the successive branching 
 events (forgetting the time). More precisely  $a_i=(u_i, {\bf k}_i, (x_{i,j})_{ 1\leq j\leq  \|{\bf k}_i\|_1})\in \mathcal U\times \mathcal Z\times \cup_{k\geq 0} \mathcal X^k$
means that individual $u_i$ has offsprings whose types are counted by ${\bf k}_i$
and successively given by $(x_{i,j})_{ 1\leq j\leq  \|{\bf k}_i\|_1}$.
For $0\leq k\leq n$, we denote  by $\mathfrak g_k(a)\subset \mathcal U$ the labels  of individuals alive just after the $k$-th event (and before the $k+1$-th) and ${\bf z}_k(a)\in \mathcal Z$  the  vector giving the corresponding  type composition. 
We also write $y_{k}(e)$ the type of the ancestor of $e$ between these $k$-th  and $k+1$-th branching event. The fact that the tree $a\in \mathfrak A_n^{\star}$ is non-extincted means that we require that
$\mathfrak g_k(a)\ne \varnothing$ for $k\leq n$. Note also that $\mathfrak g_0(a)=\mathfrak g$.  
\begin{figure}[!t]
\begin{center}
\includegraphics[scale=0.18]{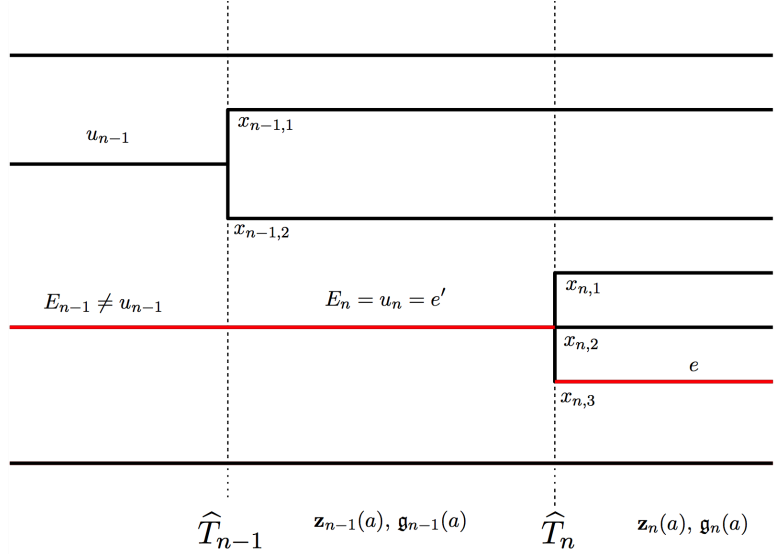}
\caption{Illustration of the construction and notation. The lines correspond to individuals and their label is indicated just above the line. Below the line we give types  for these individuals. The spine is represented in red. }
\end{center}
\end{figure}

\begin{lemma}\label{aveclesmains}
Let $n\geq 0$ and $G$ be a measurable non-negative function from $\R_+^n$.
For any  $a \in \mathfrak A_n^{\star}$  and any  $e \in \mathfrak g_{n}(a)$, 
\begin{align*}
&\E_{{\bf x}}\left(\Ind{N\geq n} G(T_1, \ldots, T_n)\, \Ind{A_i=a_i :  \,  1\leq  i \leq n}\right)\\
&\qquad =\langle {\bf v}  ,\psi(., {\bf v})\rangle\, \E_{{\bf x}}\left(\Ind{\widehat{N}\geq n, \, E_{n+1}=e} \, \mathcal W^{(a,e)}_{n}\,  G(\widehat{T}_1,\ldots,
 \widehat{T}_n) \Ind{\widehat{A}_i=a_i : \,  1\leq i \leq n}\right),
\end{align*}
where  
$$\mathcal W_n^{(a,e)}=
  \frac{\exp\left(\sum_{k=0}^{n-1}(\widehat{T}_{k+1}-\widehat{T}_k) \lambda(y_k(e),{\bf z}_k(a))\right)}{\psi(y_{n}(e),{\bf z}_n(a))} .$$
\end{lemma}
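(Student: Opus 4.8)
The plan is to prove Lemma \ref{aveclesmains} by induction on $n$, the number of branching events, since this is a statement about a finite collection of successive discrete transitions together with the holding times between them. The base case $n=0$ should reduce to the statement that, on the event of no branching before the relevant time, choosing $e$ among $\mathfrak g$ with probability $p_e$ on the original side matches choosing $E(0)=e$ on the spine side with the correction factor $\langle {\bf v},\psi(.,{\bf v})\rangle$; this is exactly how the initial distribution of the spine was defined, namely $\P(E(0)=e)=\psi(x_e,{\bf v})/\langle {\bf v},\psi(.,{\bf v})\rangle$, and $\mathcal W_0^{(a,e)}=1/\psi(x_e,{\bf v})$, so the two sides match up after cancellation. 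Actually I should be careful: the lemma as stated has $G$ on holding/branching times and indicators on the $A_i$, so for $n=0$ both sides are just $\Ind{N\geq 0}=1$ versus $\langle {\bf v},\psi(.,{\bf v})\rangle\, \mathcal W_0^{(\cdot,e)}\,\Ind{E_1=e}$, and since the spine label is chosen with the stated probability, taking expectation over $E_1$ gives precisely the claimed identity.

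For the inductive step I would condition on the first branching event. On the original side, starting from population ${\bf v}$ with labelled types ${\bf x}$, the first event happens after an exponential time with parameter $\sum_{y}\tau(y,{\bf v}){\bf v}_y$, the individual $U_1$ is chosen proportionally to $\tau(y,{\bf v})$ among individuals of each type $y$, its offspring composition ${\bf K}_1$ is chosen with probability $p_{\bf k}(y,{\bf v})$, and the offspring types are assigned via $Q_{\bf k}$. So I would write out $\E_{{\bf x}}(\Ind{N\geq n}G(T_1,\ldots,T_n)\Ind{A_i=a_i})$ as an integral over the first holding time $T_1$ of (the exponential density) times (the probability that $A_1=a_1$) times (the conditional expectation over the subtree rooted at the state after the first event, which is $\mathfrak g_1(a)$ with composition ${\bf z}_1(a)$), to which I apply the induction hypothesis with $n-1$ events and the shifted function $G(T_1,\cdot)$. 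On the spine side I do the same decomposition using the modified rates $\widehat\tau^{\,\star}_{\bf k}$ and $\widehat\tau_{\bf k}$, the offspring type assignment $Q_{\bf k}$ (same as the original), and the spine-selection probability $q_y({\bf k},{\bf z})$.

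The heart of the matter is then the algebraic identity at a single transition: one must check that, after summing/integrating, the product of (original exponential density for $T_1$) $\times$ (original branching probability of the first event) $\times$ (factor $\langle {\bf z}_1(a),\psi(.,{\bf z}_1(a))\rangle$ coming from applying the induction hypothesis to the subtree) equals $\langle {\bf v},\psi(.,{\bf v})\rangle$ $\times$ (spine exponential density for $\widehat T_1$) $\times$ (spine branching probability of the first event, including the choice of whether $U_1$ is the spine and of which child becomes the new spine) $\times$ $\exp((\widehat T_1-0)\lambda(y_0(e),{\bf v}))/\psi(\ldots)$-type Radon--Nikodym weight. The key cancellations are: the size-bias factor ${\bf k}_y$ and the $\psi(y,\cdot)/\psi(x,\cdot)$ ratios in $\widehat\tau^{\,\star}$ telescope against $q_y$ and against the $\langle{\bf k},\psi\rangle$ normalizations; the off-spine factors $\psi(x,{\bf z}-{\bf e}(y)+{\bf k})/\psi(x,{\bf z})$ are exactly what is needed so that the total spine rate $\widehat\tau^{\,\star}(x,{\bf z})+\sum_{y}\widehat\tau(y,x,{\bf z})({\bf z}_y-\delta_y^x)$ minus the original total rate $\sum_y\tau(y,{\bf z}){\bf z}_y$ produces the exponent $\lambda(x,{\bf z})=\mathcal G\psi(x,{\bf z})/\psi(x,{\bf z})$, which is why the ratio of the two exponential densities for the first holding time contributes precisely the factor $e^{(\widehat T_1)\lambda(y_0(e),{\bf v})}$ and converts one exponential law into the other; and the initial-distribution factor combines with the subtree's $\langle{\bf z}_1,\psi\rangle$ to reproduce $\langle {\bf v},\psi(.,{\bf v})\rangle$ out front.

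The main obstacle I expect is bookkeeping rather than conceptual: keeping track of whether the branching individual is the spine or not (the $\delta_y^x$ term and the split into $\widehat\tau^{\,\star}$ versus $\widehat\tau$), making sure the label $e\in\mathfrak g_n(a)$ and its ancestral type sequence $(y_k(e))_k$ are handled consistently through the recursion (in particular that when the spine branches, the induction hypothesis is applied on the subtree with the new spine label and the ancestral-type function shifts correctly), and verifying that condition \eqref{condsum} and $\psi\in\mathfrak D_{\mathcal G}$ guarantee all the sums and the total rates are finite so that the exponential holding-time computation and Fubini are legitimate. Once Lemma \ref{aveclesmains} is established, Theorem \ref{main} follows by summing over $n$ (i.e. over $N$), over the finitely-many-at-each-level discrete tree shapes $a\in\mathfrak A_n^{\star}$, and over the spine label $e\in\mathfrak g_n(a)$, then reinserting the holding-time function to recover $\mathcal W(t)$ from $\mathcal W_n^{(a,e)}$ via $\int_0^t\lambda(Y(s),{\bf\Xi}(s))\,ds=\sum_{k}(\widehat T_{k+1}\wedge t-\widehat T_k\wedge t)\lambda(y_k,{\bf z}_k)$ on $\{\widehat T_{\mathrm{Exp}}>t\}$, and folding the choice function $p_{E(t)}(\A(t))$ back in.
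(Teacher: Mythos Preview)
Your proposal is correct and follows essentially the same inductive strategy as the paper, with the same base case and the same key algebraic identity $\widehat{\tau}_n+\widehat{\tau}_n^{\,\star}-\lambda(y',{\bf z}_{n-1})=\tau_n$ that converts one exponential holding law into the other. The only difference is the direction in which the induction is peeled: the paper conditions on $\mathcal F_{T_{n-1}}$ (respectively $\widehat{\mathcal F}_{T_{n-1}}$) and computes the conditional contribution $B_n$ (respectively $\widehat B_n$) of the \emph{last} event, then invokes the hypothesis at level $n-1$; you instead propose to condition on the \emph{first} event and apply the hypothesis to the subtree rooted at $\mathfrak g_1(a)$. Both work; the paper's direction is marginally cleaner because the Markov property delivers the conditional law of the next jump directly, whereas in your direction the hypothesis at level $n-1$ produces a freshly initialized spine on $\mathfrak g_1(a)$ with its own initial law $\P(E_1'=e')=\psi(x_{e'},{\bf z}_1)/\langle{\bf z}_1,\psi(.,{\bf z}_1)\rangle$, and you must then match this against the continuation of the original spine after its first step---this is exactly where your factor $\langle{\bf z}_1,\psi(.,{\bf z}_1)\rangle$ cancels against $\psi(x_{e'},{\bf z}_1)$ and the first-step weight, as you sketch. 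The paper also closes the induction with a monotone class argument (reducing to $G_n(t_1,\ldots,t_n)=G(t_1,\ldots,t_{n-1})H(t_n-t_{n-1})$), which you would likewise need.
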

\begin{proof} The initial population ${\bf x}$ is fixed and notation is omitted in this  proof. 
For convenience, we also write ${\bf z}_n={\bf z}_n(a)$ the composition of the population between the $n$-th and $(n+1)$-th branching event and $\mathfrak{g}_n=\mathfrak{g}_n(a)$ the set of labels alive at this time.\\
We proceed by induction and start with $n=0$. 
For any $e\in \mathfrak g_0=\mathfrak g$,
$$\langle {\bf v}  ,\psi(., {\bf v})\rangle\E(\Ind{E_1=e} \, \mathcal W_{0}^{(\varnothing,e)}) =\E\left(\Ind{E_1=e} \,\frac{\langle {\bf v}  ,\psi(., {\bf v})\rangle}{\psi(x_e, {\bf v})}\right)=1.$$
Let us now consider $n\geq 1$ and assume that the identity holds for $n-1$.
We consider $G_{n}(t_i :1\leq i \leq n)=G(t_i :1\leq i \leq n-1)H(t_{n}-t_{n-1})$, where
$G$ and $H$ are measurable and non-negative and bounded respectively on $\R_+^{n-1}$ and $\R_+$.
We fix also $a=(a_i)_{1\leq i \le n}\in \mathfrak A_{n}^{\star}$ and first observe that
\begin{align}
&\E\left(\Ind{N\geq n}\, G_{n}(T_i,  \, 1\leq  i \leq n)\, \Ind{A_i=a_i :  \,  1\leq  i \leq n} \, \big\vert  \, \mathcal F_{T_{n-1}}\right) \nonumber\\
&\qquad \qquad \qquad \qquad  \qquad \qquad  =\Ind{N\geq n-1}\, G(T_i,  \, 1\leq i \leq n-1)\, \Ind{A_i=a_i :  \,  1\leq  i \leq n-1}\,
 B_n, \label{original}
\end{align}
where 
$$B_n=\E\left(\Ind{N\geq n}\, H(T_{n}-T_{n-1})\, \Ind{A_{n}=a_{n}} \,  \big\vert  \, \mathcal F_{T_{n-1}}\right)$$
and $\mathcal F_{T_{n-1}}=\sigma( T_i, A_i :  i\leq n-1)$ is the filtration generated until time $T_{n-1}$ in the original construction. 
Conditionally on $\mathcal F_{T_{n-1}}$, on the event $\{A_i=a_i :  \,  1\leq  i \leq n-1\}$ the random variable $T_{n}-T_{n-1}$ is exponentially distributed with parameter
$$\tau_n=\sum_{u \in \mathfrak{g}_{n-1} } \tau(x_u,{\bf z}_{n-1}).$$  
Considering $a_{n}$, 
 the label $u_{n}$ branches
and this latter is replaced by individual with types $(x_{n,j})_{ 1\leq j\leq  \|{\bf k}_{n}\|_1}$ and composition
${\bf k}_{n}$.  For convenience, we also write
$$Q_{n}=Q_{{\bf k}_{n}} ( x_{n,j}, 1\leq j\leq \|{\bf k}_{n}\|_1 ),$$
the probability to choose the types $( x_{n,j}, 1\leq j\leq \|{\bf k}_{n}\|_1 )$. 
On the event $\{N\geq n-1\}\cap \{A_i=a_i :  \,  1\leq  i \leq n-1\}$, we get
\begin{align}
\label{idBn}
 B_n& =\Ind{\tau_n\ne 0} \, \tau_{{\bf k}_{n}}(x_{u_{n}},{\bf z}_{n-1})\, Q_{n} \, \int_{\R_+} H(t)\, e^{-\tau_n t }  dt.
\end{align}
Similarly for $e\in \mathfrak g_{n}$, with direct ancestor $e'\in  \mathfrak g_{n-1}$ before the last branching event,
\begin{align}
 &\E\left(\Ind{\widehat{N}\geq n, \, E_{n+1}=e} \, \mathcal W_{n}^{((a_i : i \leq n),e)}\,  G_n(\widehat{T}_i,  \, 1\leq  i \leq n) \Ind{\widehat{A}_i=a_i : \,  1\leq i \leq n} \,  \big\vert  \, \widehat{ \mathcal F}_{T_{n-1}}\right) \nonumber \\
& \qquad \, =\Ind{\widehat{N}\geq n-1, \, E_{n}=e'}\, \mathcal W_{n-1}^{((a_i : i \leq n-1),e')} \,  G(\widehat{T}_i,  \, 1\leq i \leq n-1) \Ind{\widehat{A}_i=a_i : \,  1\leq i \leq n-1}\,   \frac{\psi(y',{\bf z}_{n-1})}{\psi(y,{\bf z}_{n})}  \widehat{B}_n,   \label{epine}  \end{align}
where 
$$\widehat{B}_n=\E\left(\Ind{\widehat{N}\geq n, \, E_{n+1}=e} \,  e^{(T_{n}-T_{n-1}) \lambda(y',{\bf z}_{n-1})} \,  H(\widehat{T}_{n}-\widehat{T}_{n-1}) \Ind{\hat{A}_{n}=a_{n}} \,  \big\vert  \, \mathcal F_{T_{n-1}}\right)$$
and  $y$ (resp. $y'$) is the type of the spinal individual $e$ (resp. $e'$) after (resp. before)
the $n$th branching event. We write   respectively
\Bea
\widehat{\tau}_n&=&\sum_{u \in {\mathfrak g_{n-1}}-\{e'\}}  \widehat{\tau}(x_u,y',{\bf z}_{n-1}), \qquad \widehat{\tau}_n^{ \, \star}= \widehat{\tau}^{ \, \star}(y',{\bf z}_{n-1}),
\Eea
the total branching rates of the population oustide the spine  and of the spine. Recalling that ${\bf z}_{n}={\bf z}_{n-1}-{\bf e}(x_{u_{n}})+{\bf k}_{n}$, we also write
$$
\widehat{\tau}_{n,{\bf k}_{n}}=
\tau_{{\bf k}_{n}}(x_{u_{n}},{\bf z}_{n-1}) \frac{\psi(y,{\bf z}_{n})}{ \psi(y',{\bf z}_{n-1})}
$$
the  rate at which an individual outside the spine is replaced by ${\bf k}_{n}$. If the  branching  indeed occurs outside the spine,  $y'=y$ and this rate $\widehat{\tau}_{n,{\bf k}_{n}}$ coincides with
$\widehat{\tau}_{{\bf k}_{n}}(x_{u_{n}},y',{\bf z}_{n-1})$. Besides, 
$$
 \widehat{\tau}_{n,{\bf k}_{n}}^{ \, \star}=\widehat{\tau}_{{\bf k}_{n}}^{ \, \star}(y', {\bf z}_{n-1})=\tau_{{\bf k}_{n}}(y',{\bf z}_{n-1}) \frac{\langle {\bf k}_{n},\psi(.,{\bf z}_{n})\rangle}{ \psi(y',{\bf z}_{n-1})}
$$
yields the branching rates for the spine.  If the  branching event indeed concerns 
 the spine,  $y'=x_{u_n}$ may differ from $y$. 
Similarly, the probability to choose a spine with type 
$y$  is
$$q_{n}=q_{y}({\bf k}_{n},{\bf z}_{n})=\frac{ \psi(y,{\bf z}_{n})}{\langle{\bf k}_{n},\psi(.,{\bf z}_{n})\rangle}.$$  
We distinguish  two cases, corresponding to the fact that the $n$th branching event
concerns  the spine or not, i.e. either
$u_{n}=e'$  or ($u_n \ne e'$ and $y=y'$).
On the event $\{\widehat{N}\geq n-1, \, E_{n-1}=e'\}\cap \{\widehat{A}_i=a_i : \,  1\leq i \leq n-1\}$, the time $\widehat{T}_n-\widehat{T}_{n-1}$
is exponentially distributed with parameter
$\widehat{\tau}_n+\widehat{\tau}_n^{\, \star}$ and we get
\begin{align*}
\widehat{B}_n
& = \Ind{\widehat{\tau}_n+\widehat{\tau}_n^{\, \star}\ne 0, \, u_{n}\ne e'} 
\int_{\R_+} H(t) e^{t ( \lambda(y',{\bf z}_{n-1})-(\widehat{\tau}_n+\widehat{\tau}_n^{ \, \star}))}\, \widehat{\tau}_{n,{\bf k}_{n}}\, Q_{n}\, dt \\
& \quad +\Ind{\widehat{\tau}_n+\widehat{\tau}_n^{ \, \star}\ne 0, \, u_{n}= e'} \int_{\R_+} H(t) e^{t (\lambda(y',{\bf z}_{n-1})-(\widehat{\tau}_n+\widehat{\tau}_n^{ \, \star}))} \widehat{\tau}_{n,{\bf k}_{n}}^{ \, \star}\,  q_{n}Q_{n}\, dt \\
& =\Ind{\widehat{\tau}_n+\widehat{\tau}_n^{ \, \star}\ne 0 } \,  \frac{\psi(y,{\bf z}_{n})}{\psi(y',{\bf z}_{n-1})} \, \tau_{{\bf k}_{n}}(x_{u_{n}},{\bf z}_{n-1}) \, Q_{n} \,  \int_{\R_+} H(t) e^{t (\lambda(y',{\bf z}_{n-1})-(\widehat{\tau}_n+\widehat{\tau}_n^{ \, \star}))}\,   dt.
\end{align*}
 Adding that
 by definition of $\lambda$,
$$\widehat{\tau}_n+\widehat{\tau}_n^{ \, \star}-\lambda(y',{\bf z}_{n-1}) =\tau_n,$$
we obtain  from \eqref{epine}
\begin{align*}
 &\E\left(\Ind{\widehat{N}\geq n, \, E_{n+1}=e} \, \mathcal W_{n}^{((a_i : i \leq n),e)}\,  G_n(\widehat{T}_i,  \, 1\leq  i \leq n) \Ind{\widehat{A}_i=a_i : \,  1\leq i \leq n} \,  \big\vert  \, \widehat{ \mathcal F}_{T_{n-1}}\right)  \nonumber \\
& \qquad \, =\Ind{\widehat{N}\geq n-1, \, E_{n}=e'}\, \mathcal W_{n-1}^{((a_i : i \leq n-1),e')} \,  G(\widehat{T}_i,  \, 1\leq i \leq n-1) \Ind{\widehat{A}_i=a_i : \,  1\leq i \leq n-1, \, \widehat{\tau}_n+\widehat{\tau}_n^{\star}\ne 0 }\, \\
&\qquad \qquad \qquad   \times \int_{\R_+} H(t) e^{-\tau_n t}\,   dt \, \times  \tau_{{\bf k}_{n}}(x_{u_{n}},{\bf z}_{n-1}) \, Q_{n}.
\end{align*}
\cmv{A ecrire differemment?}
Using \eqref{original} and \eqref{idBn} and the fact that $\widehat{\tau}_n+\widehat{\tau}_n^{\star}= 0$ is equivalent to $\tau_n=0$, 
the induction hypothesis  ensures
\begin{align*}
&\E\left(\Ind{N\geq n}\, G_{n}(T_i,  \,1\leq i \leq n)\, \Ind{A_i=a_i :  \,  1\leq  i \leq n} \right)\\
&\qquad =\langle {\bf v}  ,\psi(., {\bf v})\rangle\, \E\left(\Ind{\widehat{N}\geq n, \, E_{n+1}=e} \, \mathcal W^{((a_i : i \leq n),e)}_{n}\,  G_{n}(\widehat{T}_i,  \, 1\leq i \leq n) \Ind{\widehat{A}_i=a_i : \,  1\leq i \leq n}\right)
\end{align*}
by conditioning both sides with respect to their filtration until the $n+1$th branching event. It ends the proof by a monotone class argument.
\end{proof}

\begin{proof}[Proof of Theorem \ref{main}]
The result is a consequence of the previous lemma. On the event $\{N<\infty\}$, 
we  set $T_{n}=+\infty$ for   $n >N$.
For  each $t\geq 0$ and $n\geq 0$ and $e\in \mathcal U$, we introduce
 a measurable non-negative function $G^{t,e}_n$  from $\R_+^n\times \mathfrak A_n^{\star}$ such that, on the event $\{T_n\leq t < T_{n+1}, N\geq n\}$ we have 
$$F(\T(t),e)p_e(\T(t))= G^{t,e}_n(T_1, \ldots, T_n, A_1,\ldots,A_n) \quad \text{a.s}.$$
  Then
\begin{align*}
&\E_{{\bf x}}\left( \Ind{T_{\text{Exp}}>t, \, \mathbb G(t)\ne \varnothing} \, F(\T(t),U(t))\right)\\
&\qquad   =\sum_{\substack{n\geq 0,\\  a\in \mathfrak A_n^{\star},\, e\in \mathfrak g_n(a)}}
\E_{{\bf x}}\left(F(\T(t),e)p_e(\T(t)){\bf 1}_{A_i=a_i : 1\leq i\leq n,   \, T_n\leq t < T_{n+1}, N\geq n} \right)\\
&\qquad =\sum_{\substack{n\geq 0,\\  a\in \mathfrak A_n^{\star},\, e\in \mathfrak g_n(a)}}
F^{t,e}_n(a), 
\end{align*}
where
 \begin{align*}
F^{t,e}_n(a)= &\E_{{\bf x}}\left(  G^{t,e}_n(T_1, \ldots, T_n, a_1,\ldots,a_n)f_t(T_n,  a_1,\ldots,a_n) \Ind{A_i=a_i : 1\leq i\leq n, T_n\leq t , N\geq n}\right)
 \end{align*}
and 
$$ f_t(T_n,  a_1,\ldots,a_n)=\P(T_{n+1}>t \vert T_n, A_n=a_n, \ldots, A_1=a_1).$$
To end the proof, we apply Lemma \ref{aveclesmains} to express $F^{t,e}_n(a)$ in terms of the spine construction
 and writting $y_n(e)$ the type of the spine at the $n$th event, we use that 
 \begin{align*}
& \Ind{ E_{n+1}=e}f_t(\widehat{T}_n,  a_1,\ldots,a_n)\\
&\qquad =\Ind{ E_{n+1}=e}e^{(t-\widehat{T}_n) \lambda(y_n(e),{\bf z}_n(a))} \P(\widehat{T}_{n+1}>t\vert \widehat{T}_n, \widehat{A}_n=a_n, \ldots, \widehat{A}_1=a_1).
\end{align*}
This latter identity is proved  following the last lines  as in the proof of Lemma \ref{aveclesmains}. 
\end{proof}
\subsection{Positive semigroup and   martingale}
For each $(r,{\bf v})\in \overline{\mathcal Z}$, we associate an initial labeling
${\bf x}=x({\bf v})=((u,x_u) : u \in \mathfrak g)$, where $x_u$ is the type of $u\in  \mathfrak g$ and  $\#\{ u \in  \mathfrak g : x_u=x\}={\bf v}_x$. Since ${\bf v}_r\geq 1$, we can also  associate a label $u_r \in \mathfrak g$ such that $x_{u_r}=r$ .
For any $t\geq 0$  and $f$   function from $\overline{\mathcal Z}$ to $\R_+\cup\{+\infty\}$, we define for any $(r,{\bf v})\in \overline{\mathcal Z}$,
$$M_tf(r,{\bf v})=\E_{x({\bf v})}\left( \Ind{T_{\text{Exp}}>t}\, \sum_{  u\in \G(t), \, u\succcurlyeq u_r} f(Z_u(t), {\bf Z}(t))\right),$$
where  $Z$ and ${\bf Z}$ are   defined in Section \ref{model} with  initial condition $x({\bf v})$. \cmv{Faire le lien entre $M$ et $\mathcal G$ : c'est le generateur, modulo l'explosion}
This corresponds to the first moment associated to the empirical measure of the descendance of a specific initial individual, together with the total composition of the population. We   observe that  this definition  
 does not depend on the choice of the labels of $x({\bf v})$ and  $u_r$. We can also write \\
$$M_tf(r,{\bf v})=\E_{x({\bf v})}\left( \Ind{T_{\text{Exp}}>t}\, \langle {\bf Z}^{(u_r)}(t), f( ., {\bf Z}(t))\rangle\right),$$
where ${\bf Z}_x^{(u)}(t)=\#\{ v \in \G(t) \, : \, v\succcurlyeq u, \, Z_v(t)=x \}$ is the number of individuals with type $x$ at time $t$
 who are descendant of $u$.\\

Recall that $\psi\in \mathfrak D_{\mathcal G}$ is  positive   on  $\overline{\mathcal Z}$ and  satisfies $\eqref{condsum}$ and   that $\lambda=\mathcal G \psi/\psi$
on  $\overline{\mathcal Z}.$
Recall also that   ${\bf \Xi}$ is the  process counting types in the  $\psi$-spine
construction    and  $Y(t)=\Xi_{E(t)}(t)$ is the type of the spine at time $t$.
Observe that $(Y,{\bf \Xi})$ is a jump Markov process whose jump rates are determined by  $\widehat{\tau}_{{\bf k}}$ and $\widehat{\tau}^{\, \star}_{{\bf k}}$ for ${\bf k}\in \mathcal Z$. It starts from $(Y(0),{\bf \Xi}(0))=(r,{\bf v})$. For $u\in \G(t)$, we write $Z_u(s)$ the type of the (unique) ancestor of $u$ at time $s\leq t$.

\begin{proposition} \label{sg} 
 $(M_t)_{t\geq 0}$ is a positive semigroup  on the set of  functions from $\overline{\mathcal Z}$ to $\R_+\cup\{+\infty\}$.  Besides,  for any $t\geq 0,$ for any non-negative  function 
 $f$ on $\overline{\mathcal Z}$ and $(r,{\bf v})\in \overline{\mathcal Z}$,
$$M_tf(r,{\bf v})=\psi(r,{\bf v}) \, \E_{(r,{\bf v})}\left( \Ind{\widehat{T}_{\emph{Exp}}>t}\, \frac{e^{\int_0^t \lambda (Y(s),{\bf \Xi}(s)) ds}}{\psi(Y(t),{\bf \Xi}(t))} f(Y(t),{\bf \Xi}(t))\right).$$
Furthermore, for any $G$ measurable function from $\mathbb D([0,t], \X\times \mathcal  Z)
$ to $\R_+$,
\begin{align*}
&\E_{x({\bf v})}\left(\Ind{T_{\emph{Exp}}>t}\,\sum_{ u\in \G(t)} \psi(Z_u(t),{\bf Z}(t))G((Z_u(s),{\bf Z}(s))_{s\leq t})\right) \,
\nonumber \\
&\qquad \qquad =\langle   {\bf v},\psi(., {\bf v}) \rangle \E_{x({\bf v})}\left( \Ind{\widehat{T}_{\emph{Exp}}>t}\, e^{\int_0^t \lambda(Y(s),{\bf \Xi}(s))ds}\, G((Y(s),{\bf \Xi} (s))_{s\leq t})\right).  
\end{align*}
\end{proposition}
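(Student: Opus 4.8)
The plan is to deduce all three assertions from Theorem \ref{main}, only the semigroup property requiring in addition an elementary Markov argument. \emph{Positivity and semigroup property.} Positivity of $M_t$ is immediate. For $M_{t+s}=M_tM_s$ I would fix $s,t\ge0$, $(r,{\bf v})\in\overline{\mathcal Z}$ and a nonnegative $f$ — bounded, the general $\R_+\cup\{+\infty\}$-valued case following by monotone convergence — and condition the original process on the $\sigma$-field $\mathcal F_t$ generated by the tree up to time $t$. On $\{T_{\text{Exp}}>t\}$ the individuals of $\G(t+s)$ descending from $u_r$ are exactly those descending from one of the finitely many $v\in\G(t)$ with $v\succcurlyeq u_r$, and since the rates $\tau_{\bf k}(x,{\bf z})$ depend on an individual only through its type and the current composition, two individuals sharing a type in a common composition are exchangeable for the future evolution; hence the conditional expectation of the contribution of the descendants of such a $v$ — including the event that no explosion occurs on $(t,t+s]$ — equals $M_sf(Z_v(t),{\bf Z}(t))$. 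Summing over $v$ and taking expectations gives $M_{t+s}f(r,{\bf v})=\E_{x({\bf v})}(\Ind{T_{\text{Exp}}>t}\sum_{v\in\G(t),\,v\succcurlyeq u_r}M_sf(Z_v(t),{\bf Z}(t)))=M_t(M_sf)(r,{\bf v})$. Alternatively, once $(Y,{\bf \Xi})$ is known to be a jump Markov process, this follows from the Feynman--Kac representation below, which displays $M_t$ as the conjugation by $\psi$ of the Feynman--Kac semigroup of $(Y,{\bf \Xi})$ with potential $\lambda$.

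\emph{Feynman--Kac representation.} I would invoke Theorem \ref{main} with the sampling rule $p$ which, given the tree truncated at $t$, is uniform on the individuals of $\G(t)$ descending from $u_r$ when this set is nonempty and uniform on $\G(t)$ otherwise (the latter case contributing nothing below), and with the test function $F(\mathfrak t,e)=N_r(\mathfrak t)\,\Ind{e\succcurlyeq u_r}\,f(x_e(\mathfrak t),{\bf z}(\mathfrak t))$, where $N_r(\mathfrak t)$ counts the individuals of $\G(t)$ descending from $u_r$, and $x_e(\mathfrak t),{\bf z}(\mathfrak t)$ are the type of $e$ and the composition at time $t$. Then the left-hand side of Theorem \ref{main} is $M_tf(r,{\bf v})$; on the right-hand side one has $E(t)\succcurlyeq u_r$ if and only if $E(0)=u_r$, in which case $p_{E(t)}(\A(t))=1/N_r(\A(t))$ cancels the factor $N_r$, so that $\mathcal W(t)F(\A(t),E(t))=\Ind{\widehat T_{\text{Exp}}>t}\,\Ind{E(0)=u_r}\,e^{\int_0^t\lambda(Y(s),{\bf \Xi}(s))\,ds}\,\psi(Y(t),{\bf \Xi}(t))^{-1}f(Y(t),{\bf \Xi}(t))$. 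Since $\P(E(0)=u_r)=\psi(r,{\bf v})/\langle{\bf v},\psi(.,{\bf v})\rangle$ and, conditionally on $\{E(0)=u_r\}$, the spine process starts from $(r,{\bf v})$, the prefactor $\langle{\bf v},\psi(.,{\bf v})\rangle$ is absorbed and the stated identity follows; monotone convergence then removes the boundedness of $f$.

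\emph{Trajectorial many-to-one formula, and the main obstacle.} For the last identity I would use the uniform-sampling form of Theorem \ref{main} (its second displayed identity) with $F(\mathfrak t,e)=\#\G(t)\,\psi(x_e(\mathfrak t),{\bf z}(\mathfrak t))\,G(\gamma_e(\mathfrak t))$, where $\gamma_e(\mathfrak t)\in\mathbb D([0,t],\X\times\mathcal Z)$ is the path $s\mapsto(\text{type of the ancestor of }e\text{ at time }s,\ \text{composition at time }s)$, a measurable functional of $(\mathfrak t,e)$: the factor $\#\G(t)$ cancels the $1/\#\G(t)$ from the uniform sampling and the $1/\|{\bf \Xi}(t)\|_1$ on the right, $\psi(x_{E(t)},{\bf \Xi}(t))$ cancels $\psi(Y(t),{\bf \Xi}(t))^{-1}$, and the ancestral path of $E(t)$ in $\A$ is $(Y(s),{\bf \Xi}(s))_{s\le t}$, after which monotone convergence removes the boundedness of $G$. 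Granted Theorem \ref{main} the whole argument is bookkeeping; the points deserving care are the choice of a genuine sampling kernel $p$ in the Feynman--Kac step — handled by the harmless fallback on the event that $u_r$ has no surviving descendant at time $t$ — and, in the semigroup step, the exchangeability/relabeling reduction that identifies the conditional expectation with $M_sf$ evaluated at the time-$t$ type and composition of each $v\succcurlyeq u_r$.
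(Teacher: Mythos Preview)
Your proof is correct and follows essentially the same approach as the paper: the semigroup property via conditioning on $\mathcal F_t$ and Markov property, and the two representation formulas by applying Theorem~\ref{main} with a suitable test function $F$ and sampling rule $p$ restricted to descendants of $u_r$ (then conditioning on $E(0)=u_r$). The only organizational difference is that the paper first establishes the $u_r$-restricted trajectorial identity \eqref{iddd} and deduces both formulas from it (the Feynman--Kac one by taking $G$ marginal at time $t$, the full many-to-one by summing over initial individuals), whereas you invoke Theorem~\ref{main} twice with different sampling rules; your explicit fallback for $p$ on the event that $u_r$ has no surviving descendant is in fact a bit more careful than the paper's formulation.
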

We first observe that the generator of the semigroup $M$ is the linear operator 
$\mathcal G$ introduced above. This will be made explicit in applications.
This proposition  provides   a Feynman Kac representation  of the semigroup and  a so-called many-to-one formula for the population.
 We refer to \cite{DelMoral} for a general reference  on Feynman Kac formulae
 and Biggins and Kyprianou \cite{BK} for related works on  multiplicative martingales. For such representations in the context of structured branching processes and in particular for fragmentations or growth fragmentations, we mention the works of Bertoin   \cite{Bertoinbook,  Bertoin2} and Cloez \cite{Cloez} and Marguet \cite{Marguet}. We note that the event $\{\widehat{T}_{\text{Exp}}>t\}$ is measurable with respect to filtration associated to the process ${\bf \Xi}$ since this event 
is characterized by the absence of 
 accumulation of jumps for ${\bf \Xi}$ before time $t$. \\

\begin{proof}  We omit initial condition  in notation.
To prove that $M$ is a semigroup,  we  condition by the filtration $\mathcal F_t$  generated by the original process until  time $t$. For any 
$u\in \mathcal U$ and non-negative function $f$,
\begin{align*}
\E\left( \Ind{T_{\text{Exp}}>t+s, \, u\in \G(t)} \langle {\bf Z}^{(u)}(t+s), f( ., {\bf Z}(t+s))\rangle \, \big \vert \, \mathcal F_t\right)
&= \Ind{T_{\text{Exp}}>t, \, u\in \G(t)} M_sf(Z_u(t),{\bf Z}(t)).
\end{align*}
We get
\Bea
&&M_{t+s}f(r,{\bf v})\\
&& \qquad = \E\Big( \Ind{T_{\text{Exp}}>t}\, \sum_{  u\in \G(t), \, u\succcurlyeq u_r} 
\E\left( \Ind{T_{\text{Exp}}>t+s, \, u\in \G(t)} \langle {\bf Z}^{(u)}(t+s), f( ., {\bf Z}(t+s))\rangle \, \big \vert \, \mathcal F_t\right)\Big)\\
&&\qquad =
\E\left(  \Ind{T_{\text{Exp}}>t}
\langle {\bf Z}^{(u_r)}(t), M_sf( ., {\bf Z}(t))\rangle
\right)=M_t(M_sf)(r,{\bf v}).
\Eea
To prove the Feynmac Kac representation of the semigroup $M$ and get the ancestral lineage of a typical individual, 
we prove 
that  for $t\geq 0$,
\bea
&&\E\Big(  \Ind{T_{\text{Exp}}>t} \, \sum_{  u\in \G(t), \, u\succcurlyeq u_r} \psi(Z_u(t),{\bf Z}(t))G((Z_u(s),{\bf Z}(s))_{s\leq t})\Big)\nonumber\\
&&\qquad \qquad  \quad =\psi(r, {\bf v}) \E_{{(r,\bf v)}}\left(  \Ind{\widehat{T}_{\text{Exp}}>t} \,  e^{\int_0^t \lambda(Y(s),{\bf \Xi}(s))ds}\, G((Y(s),{\bf \Xi} (s))_{s\leq t})\right). \label{iddd}
\eea
Indeed, we can apply Theorem \ref{main} to
\Bea
F(\mathfrak{t},u)&= &\#\{ v\in  {\mathfrak g}(t) : \, v\succcurlyeq u_r\}\, \psi(z_u(t),{\bf z}(t))\, G((z_u(s),{\bf z}(s))_{s\leq t}), \\
 p_u(\mathfrak{t}) &=&\frac{{\bf 1}_{u\in {\mathfrak g}(t),  \, u\succcurlyeq u_r}}{\#\{ v\in  {\mathfrak g}(t) : \, v\succcurlyeq u_r\}},
\Eea
where ${\mathfrak g}(t)$ is the set of labels of $\mathfrak{t}$ alive at time $t$, ${\bf z}(t)$ is the type composition at time $t$ of the population 
and $z_u(t)$ the type of individual $u$ at time $t$.
We   observe  that $\E\left(\mathbf 1_{T_{\text{Exp}}>t, \, \G(t)\ne \varnothing} \, F(\T(t),U(t))\right)$ gives the left hand side of 
\eqref{iddd}
by exploiting the law  $p(\mathcal T(t))$ of $U(t)$ conditionally on $\mathcal T(t)$, while 
$\E\left(\mathcal W(t)\, F(\A(t), E(t))\right)$  yields the right hand side of  \eqref{iddd} by  conditioning by $E(0)=u_r$. We can also remark that  \eqref{iddd}
 amounts to  a spine construction with  initial condition $E(0)=e$, $Y(0)=r$, which 
focuses on the  lineages of individuals whose initial ancestor is $u_r$. This would provide an alternative proof.
Identity \eqref{iddd} proves the first expected expression of semigroup $M$
by considering marginal functions at time $t$. It also yields the second 
one    by summation over initial individuals, which ends the proof.
\end{proof} 

\cmv{Supermartingale utile si explosion ?}
\begin{proposition} \label{martin}
If $T_{\emph{Exp}}=+\infty$ p.s. and $\widehat{T}_{\emph{Exp}}=+\infty$ p.s., then
$$M(t)= \sum_{ u\in \G(t)}
e^{-\int_0^t  \lambda(Z_u(s),{\bf Z}(s))\, ds}\,\psi(Z_u(t), {\bf Z}(t))$$
is a non-negative martingale with respect to the filtration $(\mathcal F_t)_{t\geq 0}$ generated by the original process $Z$.  Furthermore,  it converges a.s. to $W\in [0,\infty)$.
\end{proposition}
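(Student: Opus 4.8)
The plan is to recognize $M(t)$ as a closely related object to the semigroup $M_t$ of Proposition~\ref{sg} and to use that proposition's Feynman--Kac representation, or rather the underlying many-to-one structure, directly. First I would observe that $M(t)$ is precisely $\sum_{u\in\mathfrak g}M_0^{(u)}$-type quantity built from the terms appearing in Proposition~\ref{sg}: indeed, by taking $f\equiv 1$ in the last display of Proposition~\ref{sg} (or more transparently by running the identity \eqref{iddd} with $G\equiv 1$), one gets for a single initial individual $u_r$ of type $r$ in population ${\bf v}$ that
$$\E_{x({\bf v})}\Big(\sum_{u\in\G(t),\,u\succcurlyeq u_r}e^{-\int_0^t\lambda(Z_u(s),{\bf Z}(s))ds}\psi(Z_u(t),{\bf Z}(t))\Big)=\psi(r,{\bf v}),$$
using that $\widehat T_{\text{Exp}}=+\infty$ a.s. removes the indicator on the spine side, and the Feynman--Kac weight $e^{\int_0^t\lambda}e^{-\int_0^t\lambda}=1$ collapses. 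Summing over the initial individuals gives $\E_{x({\bf v})}(M(t))=\langle{\bf v},\psi(\cdot,{\bf v})\rangle=M(0)$. So $M$ has constant expectation; nonnegativity is immediate since $\psi>0$ and the exponential is positive.

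Next I would upgrade constant expectation to the martingale property via the Markov property of $Z$. Fix $0\le s\le t$. Conditionally on $\mathcal F_s$, the population at time $t$ decomposes as the disjoint union over $u\in\G(s)$ of the descendants of $u$, and by the Markov property the subtree rooted at each $u$ evolves as an independent copy of the original process started from the current population ${\bf Z}(s)$ but "tagged" at $u$. Writing $\lambda_u(r)=\int_0^r\lambda(Z_u(v),{\bf Z}(v))dv$ and splitting $\int_0^t=\int_0^s+\int_s^t$, one has
$$\E\big(M(t)\mid\mathcal F_s\big)=\sum_{u\in\G(s)}e^{-\int_0^s\lambda(Z_u(v),{\bf Z}(v))dv}\,\E\Big(\sum_{w\in\G(t),\,w\succcurlyeq u}e^{-\int_s^t\lambda(Z_w(v),{\bf Z}(v))dv}\psi(Z_w(t),{\bf Z}(t))\,\Big|\,\mathcal F_s\Big),$$
and by the Markov property together with the single-ancestor identity above applied at time $t-s$ with initial data $(Z_u(s),{\bf Z}(s))\in\overline{\mathcal Z}$, the inner conditional expectation equals $\psi(Z_u(s),{\bf Z}(s))$. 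Hence $\E(M(t)\mid\mathcal F_s)=\sum_{u\in\G(s)}e^{-\int_0^s\lambda(Z_u(v),{\bf Z}(v))dv}\psi(Z_u(s),{\bf Z}(s))=M(s)$. One must be slightly careful that $\lambda$ may take negative values, so that the exponential weights are not bounded; but since everything is nonnegative, Tonelli justifies all interchanges of sum and expectation, and the single-ancestor identity is an equality of $[0,\infty]$-valued quantities that happens to be finite.

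Finally, almost sure convergence to a limit $W\in[0,\infty)$ is the standard nonnegative (super)martingale convergence theorem: a nonnegative martingale is in particular a nonnegative supermartingale, hence converges a.s. to an integrable, hence a.s. finite, limit $W\ge 0$. I expect the main obstacle to be purely bookkeeping: making the decomposition over $u\in\G(s)$ and the application of the Markov property rigorous when the population is a labelled tree rather than a plain counting process --- i.e. carefully stating that the subtree descending from $u$, together with the ambient population it feels, has the law of the original process started from ${\bf Z}(s)$ with a distinguished initial individual of type $Z_u(s)$ --- and checking that $\widehat T_{\text{Exp}}=+\infty$ a.s. is exactly what is needed to drop the survival indicator in the spine representation. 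No genuinely new estimate is required beyond Proposition~\ref{sg}.
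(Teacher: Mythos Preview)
Your proposal is correct and follows essentially the same route as the paper: apply the many-to-one identity of Proposition~\ref{sg} with the path functional $G((x_s,{\bf z}_s)_{s\le t})=e^{-\int_0^t\lambda(x_s,{\bf z}_s)\,ds}$ (not $G\equiv 1$ as you wrote, though your remark on the cancellation $e^{\int\lambda}e^{-\int\lambda}=1$ shows you have the right choice in mind) to obtain $\E(M(t))=M(0)$, then combine the Markov property with the single-ancestor version \eqref{iddd} of that identity to get $\E(M(t+s)\mid\mathcal F_t)=M(t)$, and conclude by nonnegative martingale convergence. This is exactly the paper's argument.
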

The proof of the martingale property uses
$\widehat{T}_{\text{Exp}}=+\infty$. Indeed, otherwise mass decays and in the case of jump process, we do not have a direct compensation via a killing rate. Observe from  \eqref{manytoone}below  that under the condition $T_{\text{Exp}}=+\infty$ p.s, the fact that $M$ is a martingale (and not only a local martingale) is equivalent to $\widehat{T}_{\text{Exp}}=+\infty$ a.s.\\
Besides, 
the limit $W$ may degenerate to $0$. In the case of branching processes,
the criterion  for non-degenerescence 
is the $L\log L$ condition for reproduction law, coming from Kesten and Stigum theorem.
In Section \ref{LlogL}, we deal with a counterpart with interactions in the single type case, following the spinal approach of \cite{LPP}  for Galton Watson processes. It corresponds to the case $\psi=1$. Then $\lambda (x,{\bf z})= \sum_{{\bf k}\in \mathcal Z} \tau_{{\bf k}}(x,{\bf z}) \| {\bf k} \|_1 \, -\tau(x,{\bf z})$
yields   the growth rate induced by individuals of  type $x$ among population
${\bf z}$.\\
\cmv{Mettre des $h$ quand c'est vect propre ?}
Finally, we stress that the case when the semigroup $M$ has a positive eigenfunction (harmonic function)
allows to simplify the exponential term, since $\lambda$ is then  constant.
It is of particular interest and will be exploited in applications of the next sections. In population dynamics and genetics, this eigenfunction corresponds to the notion of \emph{reproductive value}, giving the long term impact in terms of population size of a given individual (depending on its trait).
We refer to \cite{MS, BCGM} and references therein
for general results ensuring existence and/or uniqueness of eigenelements of positive semigroup in related contexts.

\begin{proof} The initial condition ${\bf x}$ is fixed and omitted in notation.
The fact $T_{\text{Exp}}=+\infty$ p.s and Proposition \ref{sg} applied 
to $$G((Z_u(s),{\bf Z}(s))_{s\leq t})=e^{-\int_0^t \lambda(Z_u(s),{\bf Z}(s))ds} \psi(Z_u(t),{\bf Z}(t))$$
ensure that 
\begin{align}
\langle   {\bf v},\psi(., {\bf v}) \rangle \P\left(\widehat{T}_{\text{Exp}}>t\right)=&\E\left(\sum_{ u\in \G(t)} e^{-\int_0^t \lambda(Z_u(s),{\bf Z}(s))ds} \psi(Z_u(t),{\bf Z}(t))\right)  \label{manytoone}
\end{align}
for any $t\geq 0$.  This identity  guarantees the integrability of $M$.
Similarly Markov property and $\eqref{iddd}$ allow to write for $u,t$ fixed, on the event $u\in \G(t)$,
\begin{align*}
& \E\left(  \sum_{  v \in \G({t+s}),  \, v \succcurlyeq u} e^{-\int_t^{t+s} \lambda (Z_u(\tau),{\bf Z}(\tau)) d\tau}\psi(Z_v(t+s),{\bf Z}(t+s))  \, \bigg\vert \, \mathcal F_t\right) =   \psi(Z_u(t),{\bf Z}(t)) 
\end{align*}
since $\widehat{T}_{\text{Exp}}=\infty$ a.s. We get
$$
\E(M(t+s) \vert \mathcal F_t)=  \sum_{ u\in \G(t)} e^{-\int_0^t \lambda (Z_u(\tau),{\bf Z}(\tau))d\tau} \psi(Z_u(t),{\bf Z}(t))=M(t),
$$
which proves the proposition.
\end{proof}

\section{Single type density dependent Markov process  and neutral evolution} 
\label{dim1}
In this section, we consider   single type populations and  some  issues which have originally motivated this work. 
  In that case, when the size of the population is $z\in \N$, each individual branches and is replaced by $k$ individuals with rate $\tau_k(z)$, for 
$k\in\N_0$. We forget bold letters in this section for single type case and 
$(Z(t))_{t\geq 0}$ is the jump Markov process on $\mathbb N_0$ giving the population size along time. \\
We consider $\psi :\N\rightarrow (0,\infty)$ and 
the $\psi$-spine construction is as follows. The distinguished individual    is replaced by  $k\in \N$ individuals at rate 
$$\widehat{\tau}_{k}^{\, \star}(z)=k\tau_{k}( z) \frac{\psi(z-1 + k)}{ \psi(z)} \qquad (z\geq 1).$$
Among these offsprings, each individual is chosen to be the new label of the spine with probability $1/k$.\\
The individuals but  the spine
 branch  and are replaced by $k\in \N_0$ individuals at rate 
$$\widehat{\tau}_{k}(z)=\tau_{ k}(z) \frac{\psi(z-1+ k)}{ \psi(z)} \qquad (z\geq 2).$$
We observe that the size $\Xi$  of the population in the $\psi$-spine construction
is a density dependent Markov process with transition rates from $z$ to $z+k-1$ equal to 
$$(k+z-1)\tau_{k}( z) \frac{\psi(z-1 + k)}{ \psi(z)}  \qquad (z\geq 1).$$
Thus, $\Xi$ is a population process with individual  branching rates $\tau_{k}(z) \psi(z+k)/\psi(z)$, plus additional size depend immigration, where  $k\geq 1$ immigrants arrive in the population of size $z\geq 1$ at rate $k\, \tau_k(z)\, \psi(z-1+k)/\psi(z).$\\
Generator $\mathcal G$ is now defined for real valued functions $f$ on 
$\mathbb N$ and writes  for $z\geq 1$ as
\begin{align*}
\mathcal Gf(z)
&=\sum_{k\in \N_0} \tau_{k} (z) (z+k-1)f(z+k-1)-z\tau(z) f(z).
\end{align*}
Function $\lambda=\mathcal G\psi/\psi$ becomes for $z\geq 1$,
$$ \lambda(z)=\sum_{k\in \N_0} \tau_{k} (z) (z+k-1)\frac{\psi(z+k-1)}{\psi(z)}-z\tau(z).$$
\subsection{Harmonic function}
Exchangeability in the single type case suggests the choice 
$\psi(z)=1/z$ for $z\geq 1$.
We get $\lambda(z)=0$ if $z\geq 2$ and $\lambda(1)=-\tau_0(1)$. In particular the inverse function is an eigenelement associated with the eigenvalue $\lambda=0$ when the process cannot reach (and be absorbed) in $0$, i.e. in the case
$\tau_0(1)=0$. 
Let us then consider    a uniform choice $U(t)$ among individuals  alive at time $t$. Conditionally on $\mathbb G(t)$, we assume that this variable is independent of $\mathcal T(t)$ and uniformly distributed $\mathbb G(t)$, when this latter is non empty.
Since here $\lambda=0$, we get $\mathcal W(t)={\bf 1}_{T_{\text{Exp}}>t}$ a.s. and  Theorem \ref{main} yields:
\begin{proposition} \label{und} Assume
$\tau_0(1)=0$.   
Then, for any $t\geq 0$, ${\bf 1}_{T_{\emph{Exp}}>t}(\T(t), U(t))$
is distributed as ${\bf 1}_{\widehat{T}_{\emph{Exp}}>t}(\A(t),E(t))$, where $(\A,E)$ is the  $1/z$-spine construction. 
\end{proposition}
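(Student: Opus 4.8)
The plan is to apply Theorem~\ref{main} with the specific choice $\psi(z) = 1/z$ and verify that the hypotheses are met and that the exponential weight degenerates. First I would check that $\psi(z) = 1/z$ belongs to $\mathfrak D_{\mathcal G}$: in the single type setting condition \eqref{condsum} reads $\sum_{k} \tau_k(z)\, k\, \psi(z-1+k) = \sum_k \tau_k(z)\, k/(z-1+k) < \infty$, which holds since $k/(z-1+k) \le 1$ and $\sum_k \tau_k(z) = \tau(z) < \infty$; similarly the second summability requirement defining $\mathfrak D_{\mathcal G}$ is dominated by $z\tau(z) < \infty$. So $\psi$ is admissible.

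Next I would compute $\lambda = \mathcal G\psi/\psi$ using the displayed formula in Section~\ref{dim1}: for $z \ge 1$,
\begin{align*}
\lambda(z) &= \sum_{k \in \N_0} \tau_k(z)(z+k-1)\frac{\psi(z+k-1)}{\psi(z)} - z\tau(z)\\
&= \sum_{k \in \N_0} \tau_k(z)(z+k-1)\frac{z}{z+k-1} - z\tau(z) = z\sum_{k\in\N_0}\tau_k(z) - z\tau(z) = 0,
\end{align*}
with the proviso that the $k=0$ term at $z=1$ must be handled: there $z+k-1 = 0$, so that summand contributes $\tau_0(1)\cdot 0 \cdot f(0)/\psi(1)$, i.e.\ it simply vanishes rather than being $\tau_0(1)\cdot z = \tau_0(1)$; this is exactly why the hypothesis $\tau_0(1) = 0$ is imposed (so that no individual is ever absorbed at $0$, and the heuristic cancellation $\lambda \equiv 0$ is genuinely valid on all of $\N$). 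I would state this carefully: under $\tau_0(1) = 0$ the process $Z$ (started from $\mathfrak g$ nonempty) stays in $\N$, and $\lambda(z) = 0$ for every $z \ge 1$.

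With $\lambda \equiv 0$, the weight $\mathcal W(t)$ from the definition before Theorem~\ref{main} simplifies. For a uniform choice $U(t)$, $p_{E(t)}(\A(t)) = 1/\|{\bf\Xi}(t)\|_1 = 1/\Xi(t)$, so
$$\mathcal W(t) = {\bf 1}_{\widehat T_{\text{Exp}} > t}\,\frac{e^{\int_0^t \lambda\,ds}}{\psi(\Xi(t))}\cdot\frac{1}{\Xi(t)} = {\bf 1}_{\widehat T_{\text{Exp}} > t}\cdot \Xi(t)\cdot\frac{1}{\Xi(t)} = {\bf 1}_{\widehat T_{\text{Exp}} > t}.$$
Likewise the prefactor $\langle {\bf v},\psi(\cdot,{\bf v})\rangle$ becomes $v \cdot (1/v) = 1$ where $v = \|{\bf v}\|_1$ is the initial population size. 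Theorem~\ref{main} then reads: for every bounded measurable $F$ on $\mathbb T \times \mathcal U$,
$$\E_{\bf x}\big({\bf 1}_{T_{\text{Exp}}>t,\,\mathbb G(t)\ne\varnothing}\,F(\T(t),U(t))\big) = \E_{\bf x}\big({\bf 1}_{\widehat T_{\text{Exp}}>t}\,F(\A(t),E(t))\big).$$
Finally I would note that on $\{T_{\text{Exp}} > t\}$ we automatically have $\mathbb G(t) \ne \varnothing$ (since $\tau_0(1) = 0$ forbids extinction and the population size is always $\ge 1$), so the indicator on the left is just ${\bf 1}_{T_{\text{Exp}}>t}$; identifying the functionals gives that ${\bf 1}_{T_{\text{Exp}}>t}(\T(t),U(t))$ and ${\bf 1}_{\widehat T_{\text{Exp}}>t}(\A(t),E(t))$ have the same law, which is the claim. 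The only genuinely delicate point is the bookkeeping at $z = 1$ in the computation of $\lambda$ and the role of $\tau_0(1) = 0$; everything else is substitution into Theorem~\ref{main}.
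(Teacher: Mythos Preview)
Your proof is correct and follows exactly the route the paper takes: compute $\lambda\equiv 0$ for $\psi(z)=1/z$ under the hypothesis $\tau_0(1)=0$, observe that $\mathcal W(t)$ then collapses to ${\bf 1}_{\widehat T_{\text{Exp}}>t}$ and the prefactor $\langle {\bf v},\psi(\cdot,{\bf v})\rangle$ to $1$, and invoke Theorem~\ref{main}. You supply more detail than the paper (the check that $\psi\in\mathfrak D_{\mathcal G}$, the careful treatment of the $k=0$, $z=1$ term, and the observation that $\mathbb G(t)\ne\varnothing$ is automatic), but the argument is the same.
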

This $1/z$-spine construction consists in a single type density dependent Markov process with a distinguished individual and individual jump rates 
$$\widehat{\tau}_{k}^{ \, \star}(z)=k\, \tau_{k}( z)  \frac{z}{z-1 + k}, 
\qquad \widehat{\tau}_{k}(z)=\tau_{ k}(z) \, \frac{z}{z-1 + k}$$
for $z\geq 1,k\geq 0$. We  recover the fact that the process $\Xi$ counting the size of the population in the $1/z$-spine construction is distributed as the original process $Z$.
We give a  consequence about ancestral lineage of samples, which will be useful. We consider the case when the size of the population  of the spine construction $\# \V(t)$
converges in law   to a stationary distribution  $\pi=(\pi_z)_{z\geq 1}$. 
Then,  the number of branching events with $k$ offsprings along the ancestral lineage of a uniform sample in $\mathbb G(t)$ grows linearly with rate 
$$\widehat{\pi}_k=k\, \sum_{z\geq 1} \pi_z \, \tau_{k}( z) \, \frac{z}{z-1 + k}.$$
Before applying this result, we mention that in the case when $\tau_0(1)\ne 0$,  an analogous  result can be stated conditionally 
on the survival of the process. The eigenfunction $\psi$ is then non-explicit in general, but can be written as $h(z)/z$ for $z\geq 1$, where $h$ is the harmonic function of the killed process. It allows in particular to describe sampling in the quasistionnary regime, i.e. when the process conditioned to survive at a given time converges in law. 
In that case the  process $\Xi$ survives a.s. but the original process dies out. \\

{\bf A growth fragmentation model with competition.}
We consider a neutral model of dividing cells including competition, which induces death of cells. The mass of the cell grows during its life at a fixed exponential speed, and two mechanisms may regulate this mass : division (random splitting of the mass)
and death  (with individual death rate of cells increasing with total number of cells).
Without interactions, for branching structures, such processes  have received lots of attention, including deterministic, random and structured frameworks. We refer  e.g. to
 \cite{BCGM, BW,  Bertoin2, Cloez, Marguet} and references therein. We assume for this example that cells divide in two daughter cells
\begin{align}
\label{naissmort}
b_z=\tau_2(z), \quad d_z=\tau_0(z), \quad \tau_1(z)= \tau_k(z)=0 \quad \text{ for  }z\geq 1, k\geq 3.
\end{align} We  assume also that the individual birth rate is bounded
and death is only caused by competition  : 
\cmv{Ref non explosion}
\begin{equation}
\label{borneb}
\sup_{z\geq 1} b_z \leq \overline{b},  \qquad  d_1=0.
\end{equation}
So  process $Z$ is well defined and positive for any time and we are exploiting the $1/z$ spine construction.  
Each cell is now equipped with a size, which grows at exponential rate $r>0$.
  Let us denote by $(\zeta_u(t))_{u \in \G(t)}$ the process giving the size of each
  cell alive at time $t$. Thus, between two jumps of the cell population, 
 $$\zeta_u'(t)=r\zeta_u(t).$$
When the cell dies, its mass is lost. When it divides, it is shared randomly between each daughter cell, using a random fraction $F\in (0,1)$ a.s. More precisely, we draw an i.i.d.  family of r.v. $(F_u)_{u\in \mathcal U}$ 
distributed as $F$ and when cell $u$ divides at time $t$ with mass $\zeta_{u}(t-)$, its two offsprings
get masses 
$$(\zeta_{(u,1)}(t),\zeta_{(u,2)}(t))=(F_u\zeta_{u}(t-),(1-F_u)\zeta_{u}(t-)).$$ 
Without loss of generality, we assume  that 
$F$ is distributed as $1-F$, i.e.  the law of $F$ is symmetric  with respect to one half.
Let us refer to \cite{BDMT, Marguet} for similar constructions in general context of branching processes. 
We start for simplicity from a single cell with size $\zeta_0>0$ : $Z(0)=1$, $\zeta_1(0)=\zeta_0$. Let us give a trajectorial description of the population process together with the spine individual. For that purpose and convenice, we use a Poisson representation 
for constructing the original birth and death process $Z$, given by a Poisson point measure and use the same measure for the spine construction.
\cmv{est ce ambigu d'utiliser $Z$ a chaque fois sachant que pour l'epine c'est $\Xi$ mais c'est pareil ici? ajouter IW en ref}
More precisely, we define
 the  process
$(Z(t),\zeta^\star(t))_{t\geq 0}$
 as the unique strong solution of the following stochastic differential equation
\begin{align*}
Z(t)&=1+\int_0^t\int_{\R_+^2} \left({\bf 1}_{u\leq  Z(s-)b_{Z(s-)}} -{\bf 1}_{ Z(s-) b_{Z(s-)}<u\leq Z(s-)(b_{Z(s-)}+d_{Z(s-)})}\right)\mathcal N(ds,du,df),\\
\zeta^\star(t)&=\zeta_0+\int_0^t r \zeta^\star(s)\, ds-\int_0^t \int_{\R_+\times (0,1)} (1-f) \zeta^\star(s) {\bf 1}_{u\leq  2 b_{Z(s-)} Z(s-)/(Z(s-)+1)} \, \mathcal N(ds,du,df),
\end{align*}
where $\mathcal N$ is a Poisson point measure on $\R_+^2\times (0,1)$
with intensity $dsdu\P(F\in df)$. 
Existence and strong uniqueness are classical
and we refer e.g. to \cite{IW,BM}.
In words, $\zeta^\star$ is a Markov process growing exponentially at speed $r$, which undergoes multiplicative jumps distributed as $F$. These jumps occur at the  birth rate along the spine, which itself lives under population $Z$. Since explosion is here excluded by assumption, Proposition \ref{und} yields:
\begin{proposition} \label{neutre} Assume that \eqref{naissmort} and \eqref{borneb} hold. Let  $t\geq 0$ and $U(t)$ be a uniform choice among $\mathbb G(t)$, which is independent of $(\zeta_u(s))_{s\geq 0, u \in \G(s)}$ conditionally on $\mathbb G(t)$. Then,
$(Z(t),\zeta_{U(t)}(t))$ is distributed as $(Z(t),\zeta^\star(t))$.
\end{proposition}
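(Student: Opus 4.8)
The plan is to apply Proposition \ref{und} to the enriched tree that carries the size marks, not merely to $(\T(t),U(t))$ as an abstract labelled tree. First I would observe that the construction of the sizes $(\zeta_u)_{u\in\mathcal U}$ is a deterministic measurable functional of the underlying tree $\T$ together with the i.i.d.\ family $(F_u)_{u\in\mathcal U}$: between jumps each $\zeta_u$ solves $\zeta_u'=r\zeta_u$, at a division the mass is split by $F_u$, and the initial cell has mass $\zeta_0$. Thus $\zeta_{U(t)}(t)$ is a measurable function $\Phi(\T(t),U(t),(F_u)_u)$ of the truncated tree, the sampled label, and the marks. Since the $F_u$ are independent of $\T$ and of $U(t)$ given $\mathbb G(t)$, it suffices to transport the joint law of $(\T(t),U(t))$ through Proposition \ref{und} and then re-attach an independent copy of the marks to the spine tree.

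Concretely, the key steps in order are: (i) enlarge the state by declaring each individual to carry an i.i.d.\ mark $F_u$, attached at its birth; because the marking is exchangeable and independent of the branching dynamics, Proposition \ref{und} still applies verbatim to the marked truncated tree, giving that $\mathbf 1_{T_{\text{Exp}}>t}(\T(t),U(t),(F_u)_u)$ is distributed as $\mathbf 1_{\widehat T_{\text{Exp}}>t}(\A(t),E(t),(F_u)_u)$ — here one uses that $\tau_0(1)=d_1=0$ from \eqref{borneb}, so the hypothesis of Proposition \ref{und} holds, and non-explosion of $Z$ from \eqref{naissmort}--\eqref{borneb} makes the indicator trivial. (ii) Apply the size functional $\Phi$ to both sides: on the left this yields $(Z(t),\zeta_{U(t)}(t))$ by definition of the cell sizes; on the right it yields $(\#\V(t),\zeta_{E(t)}(t))$, the size carried by the spine in the $1/z$-spine construction equipped with the same i.i.d.\ splitting marks. (iii) Identify the law of $\#\V(t)$ with that of $Z(t)$: this is precisely the remark after Proposition \ref{und} that the size of the population in the $1/z$-spine construction is distributed as the original process $Z$ (the immigration rate $k\tau_k(z)z/(z-1+k)$ and the spine branching rate combine so that $\Xi$ has the same transition rates as $Z$). (iv) Identify the law of the spine-size process $(\zeta_{E(t)}(t))_{t\ge 0}$ with that of $\zeta^\star$: along the spine, divisions occur at the spine's branching rate $\widehat\tau_2^{\,\star}(z)=2b_z\,z/(z+1)$ when the ambient population has size $z$, and at each such division the spine keeps the fraction $F_u$ of its mass — but because $F$ is symmetric about $1/2$, it is immaterial whether the spine is ``offspring $1$'' or ``offspring $2$'', so the retained fraction is distributed as $F$ regardless of the choice $q=1/k=1/2$. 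Between divisions the spine mass grows at rate $r$. This is exactly the SDE defining $(Z(\cdot),\zeta^\star(\cdot))$, with $\mathcal N$ encoding the Poissonian division times (at rate $2b_{Z(s-)}Z(s-)/(Z(s-)+1)$) and the marks.

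The main obstacle, and the point deserving care, is step (iv): one must check that the \emph{jointly} described pair $(\#\V(t),\zeta_{E(t)}(t))$ — not just each marginal — has the claimed law $(Z(t),\zeta^\star(t))$. The subtlety is that the division events driving $\zeta_{E(t)}$ are exactly the branching events \emph{of the spine}, whose rate depends on the current ambient population size $\#\V$; so the two coordinates are coupled through a common clock, and one has to verify that this coupling coincides with the one built into the SDE via the single Poisson measure $\mathcal N$. This is where the explicit spine rates $\widehat\tau_k^{\,\star}$, $\widehat\tau_k$ and the consistency observation ``$\Xi$ is distributed as $Z$'' must be used together, rather than separately. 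Once the generator of $(\#\V(t),\zeta_{E(t)}(t))$ is written down and matched, by the symmetry of $F$ and standard strong-uniqueness for the SDE (as in \cite{IW,BM}), with the generator of $(Z(t),\zeta^\star(t))$, the identification follows and the proof is complete.
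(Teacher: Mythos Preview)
Your proposal is correct and follows the same route as the paper: the paper's proof is the single sentence ``Since explosion is here excluded by assumption, Proposition \ref{und} yields'' the result, relying on the Poisson SDE representation of $(Z,\zeta^\star)$ to encode exactly the joint dynamics of population size and spine mass that you spell out in steps (iii)--(iv). Your plan simply makes explicit the details the paper leaves implicit---the marking by $(F_u)_u$, the symmetry of $F$ so that the retained fraction along the spine is distributed as $F$, and the matching of the coupled generator---and in particular your identification of the joint-law check in step (iv) as the only point requiring care is accurate.
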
 
We stress that this identity in law holds (only) for fixed time $t$, not for the full processes.
\cmv{A mettre apres juste a l'enonce ?}
This result allows to use Markov techniques to study the regulation of the size of cells through a typical (uniformly chosen) lineage. In particular, we can state here a new transition phase exploiting Birkhoff ergodic theorem, when the number of cells is
regulated by competition.
Thus  we also  assume now
that the Markov process $Z$ is irreducible and positive recurrent on 
$\mathbb N$ (see \cite{KM, Karlin} for  explicit conditions). 
Then  $Z(t)$ converges in law to the unique    stationary distribution $\pi=(\pi_z)_{z\geq 1}$ as $t$ tends to infinity and we set  $$\widehat{\pi}= 2 \sum_{z\geq 1} \pi_z b_z \frac{z}{z+1}.$$
Finally, we make the following moment assumption:
\begin{equation}
\label{momentF}
\E(\log (F)^2) <\infty
\end{equation} 
and  we get under these conditions the following classification for this model.
\cmv{DONNNER CONDITiON SUFFISANTE IRREductibilite recurrence poisitive, Dire que c'est nouveau (mm si on dit pas que c'est pas obtenable autrement souligner aussi au dessus l'interet d'vaoir pas juste le regim  asymptotique avec des vitesses, effets de la cond intiale etc }
\begin{corollary}  \label{coronord}  Assume that \eqref{naissmort}-\eqref{borneb}-\eqref{momentF} hold and that $Z$ is irreducible and positive recurrent. \\
i) If  $r <\E(\log(1/F))\, \widehat{\pi},$
then $\zeta^\star(t)$ tends a.s. to $0$ as $t\rightarrow\infty$ and 
$$\lim_{t\rightarrow \infty} \max \{  \zeta_u(t)\, : \, u \in \G(t) \}=0 \quad \text{in probability}.$$
ii) If  $r >\E(\log(1/F)) \, \widehat{\pi},$ then $\zeta^\star(t)$ tends a.s. to infinity as $t\rightarrow\infty$ and
$$\lim_{t\rightarrow \infty} \min \{  \zeta_u(t)\, : \, u \in \G(t) \}= +\infty \quad \text{in probability}.$$
\end{corollary}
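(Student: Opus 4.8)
The plan is to analyze the one-dimensional Markov process $\zeta^\star$ along the spine using the explicit description provided by Proposition \ref{neutre} and the stationarity of $Z$, and then transfer the conclusion to the whole population. First I would take logarithms: writing $L(t)=\log \zeta^\star(t)$, the SDE for $\zeta^\star$ becomes
\begin{align*}
L(t) = \log\zeta_0 + rt + \sum_{i : \widehat{T}_i \le t} \log F_{(i)},
\end{align*}
where the jump times are those of the spine's division events and $\log F_{(i)}$ are i.i.d.\ copies of $\log F$ (negative since $F\in(0,1)$), independent of everything else. So $L(t)/t = r + \bigl(\text{number of spine divisions up to }t\bigr)/t \cdot \overline{\log F} + o(1)$, heuristically. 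The point is to make this precise: the number $D(t)$ of division events along the spine up to time $t$ is a counting process whose instantaneous rate, when the population has size $Z(s)$, equals $\widehat\tau_2^{\,\star}(Z(s)) = 2 b_{Z(s)} Z(s)/(Z(s)+1)$. Since under the stationary (or merely positive recurrent) regime $Z$ is ergodic, Birkhoff's ergodic theorem gives $\frac1t\int_0^t 2 b_{Z(s)} Z(s)/(Z(s)+1)\,ds \to \widehat\pi$ a.s. A standard comparison (the compensated jump process is a martingale; a law of large numbers for such a point process, or sandwiching $D(t)$ between its compensator plus fluctuations) then yields $D(t)/t \to \widehat\pi$ a.s.

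Next I would combine this with a strong law for the jump sizes. Conditionally on the sequence of division times, the increments $\log F_{(i)}$ are i.i.d.\ with mean $-\E(\log(1/F))$ and finite second moment by \eqref{momentF}; hence $\frac1{D(t)}\sum_{i\le D(t)} \log F_{(i)} \to -\E(\log(1/F))$ a.s.\ on the event $D(t)\to\infty$ (which holds a.s.\ here since $\widehat\pi>0$). Putting the two limits together,
\begin{align*}
\frac{L(t)}{t} \;\longrightarrow\; r - \E(\log(1/F))\,\widehat\pi \qquad \text{a.s.}
\end{align*}
In case (i) this limit is strictly negative, so $L(t)\to-\infty$, i.e.\ $\zeta^\star(t)\to 0$ a.s.; in case (ii) it is strictly positive, so $\zeta^\star(t)\to+\infty$ a.s. The finite-second-moment hypothesis \eqref{momentF} is exactly what guarantees the strong law (and controls the martingale fluctuations in $D(t)$); with only a first moment one would still get the result but the argument is slightly more delicate.

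Finally I would transfer from the spine to the population. By Proposition \ref{neutre}, for each fixed $t$ the pair $(Z(t),\zeta_{U(t)}(t))$ has the law of $(Z(t),\zeta^\star(t))$, where $U(t)$ is a uniform sample. In case (i), fix $\eps>0$: $\Prob{\zeta_{U(t)}(t) > \eps} = \Prob{\zeta^\star(t)>\eps} \to 0$. But conditionally on the population, $\Prob{\zeta_{U(t)}(t)>\eps \mid \mathcal F_t} = \#\{u\in\G(t):\zeta_u(t)>\eps\}/\|Z(t)\|_1 \ge \Ind{\max_u \zeta_u(t) > \eps}/\|Z(t)\|_1$. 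Since $Z(t)$ is tight (it converges in law to $\pi$), for any $\delta>0$ there is $K$ with $\Prob{\|Z(t)\|_1>K}<\delta$ uniformly in $t$; hence $\Prob{\max_u\zeta_u(t)>\eps} \le K\,\Prob{\zeta^\star(t)>\eps} + \delta \to \delta$, and letting $\delta\to0$ gives $\max_u \zeta_u(t)\to 0$ in probability. Case (ii) is symmetric: $\min_u \zeta_u(t) < M$ implies, via the uniform sample, $\Prob{\zeta_{U(t)}(t)<M\mid\mathcal F_t}\ge 1/\|Z(t)\|_1$, and the same tightness argument bounds $\Prob{\min_u\zeta_u(t)<M}$ by $K\,\Prob{\zeta^\star(t)<M}+\delta$, which tends to $\delta$ since $\zeta^\star(t)\to\infty$.

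The main obstacle I anticipate is the law of large numbers for the number of division events $D(t)$ along the spine: the rate of this point process is driven by the recurrent process $Z$ rather than being constant, so one must couple Birkhoff's theorem for $Z$ with a martingale/renewal argument for the counting process, being careful that positive recurrence (not necessarily reversibility or explicit mixing rates) suffices. The passage from the fixed-time identity in law to the in-probability statements about $\max$ and $\min$ over the population is routine once tightness of $Z(t)$ is invoked, but it does genuinely use that the sample is uniform and that the population size stays bounded in probability.
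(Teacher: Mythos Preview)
Your proof is correct and shares the paper's overall strategy: take logarithms, invoke the ergodicity of $Z$ via Birkhoff, control fluctuations with a martingale argument, and transfer to the population via Proposition~\ref{neutre} plus tightness of $Z(t)$. The one structural difference is in handling the random sum $\sum_{i\le D(t)}\log F_{(i)}$. You split it into two steps---first $D(t)/t\to\widehat\pi$ (Birkhoff on the compensator $\int_0^t h(Z(s))\,ds$ with $h(z)=2b_z z/(z+1)$, plus a martingale LLN for $D(t)$ minus its compensator), then the strong law for the i.i.d.\ jump sizes---whereas the paper compensates the full $\log f$-weighted Poisson integral in a single stroke, writing
\[
\log\zeta^\star(t)=\log\zeta_0+rt+\E[\log F]\int_0^t h(Z(s))\,ds+M(t)
\]
and using $M(t)/t\to0$ directly; this is where \eqref{momentF} enters, to bound the quadratic variation of $M$. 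The paper's route is slightly more streamlined, but as you observe your decomposition actually gets by with only a first moment on $\log F$. Your transfer from the spine to $\max$ and $\min$ over the population is exactly the paper's argument.
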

\begin{proof}  Recalling the SDE representation of $\zeta^\star$ given above, we have 
$$\log(\zeta^\star(t))=\log(\zeta_0)+rt+\int_0^t  \int_{\R_+\times (0,1)} \log(f) {\bf 1}_{u\leq  2 b_{Z(s-)} Z(s-)/(Z(s-)+1)} \, \mathcal N(ds,du,df).$$
The classification and asymptotic behavior of $\zeta^\star$   is then inherited from ergodic averaging of Birkhoff theorem. Indeed, writing 
$h(z)=2b_z z/(z+1) $ and $\alpha=\E(\log(F))$,
\begin{align*}
\log(\zeta^\star(t))&=\log(\zeta_0)
+rt+\alpha \int_0^t    h(Z(s))ds + M(t),
\end{align*}
where 
$$M(t)= \int_0^t  \int_{\R_+\times (0,1)} \log(f) {\bf 1}_{u\leq  h(Z(s-))} \, \widetilde{\mathcal N}(ds,du,df)$$
and 
$\widetilde{\mathcal N}$ is the compensated measure of $\mathcal N$. Birkhoff theorem for continuous time Markov processes \cite{Norris} ensures that
$$\frac{1}{t} \int_0^t    h(Z(s))ds\stackrel{t\rightarrow \infty}{\longrightarrow} \sum_{z\geq 1}
h(z)\pi_z \qquad \text{a.s.}$$
since $h$ is bounded by Assumption \eqref{borneb}. Besides
$(M(t))_{t\geq 0}$ 
is a martingale with  bounded quadratic variation on finite time intervals by Assumption \eqref{momentF}.
We deduce that 
$$\frac{1}{t} M(t) \stackrel{t\rightarrow \infty}{\longrightarrow} 0 \qquad \text{a.s.}$$
and we can conclude that $\log(\zeta^\star(t))$ tends to $+\infty$ or $-\infty$ depending
on the fact that $r+\alpha \sum_{k\geq 0}
h(z)\pi_z $ is positive or negative.\\
$\newline$

\noindent We conclude on the original process by using  Proposition \ref{neutre}. Indeed, 
let $\varepsilon>0$ and $A>0$,
$$\P\left(\max \{  \zeta_u(t)\, : \, u \in \G(t) \}\geq\varepsilon,\, \# \G(t)\leq A\right)\leq A\P\left( \zeta_{U(t)}(t)\geq\varepsilon\right)=A\P\left( \zeta^{ \, \star}(t)\geq\varepsilon\right)$$   and the right hand tends to $0$  if   $r <\E(\log(1/F)) \widehat{\pi}$. We conclude for  $i)$ by letting $A$ go to infinity and $\varepsilon$ go to $0$ and by using that $\# \G(t)=Z(t)$ is stochastically bounded. The  other case is treated similarly.
\end{proof}
Let us comment briefly this result and the proof. The assumptions of boundedness of the individual birth rate $b_z$ and the second moment of $\log F$ could be probably relaxed using finer ergodic techniques. 
The critical case is interesting. We expect that in general $\zeta^\star$ oscillates a.s. and that for any $\varepsilon>0$,
 $$\limsup_{t\rightarrow \infty} \P(\max \{  \zeta_u(t)\, : \, u \in \G(t) \} \leq \varepsilon)=1, \quad \limsup_{t\rightarrow \infty} \P( \min \{  \zeta_u(t)\, : \, u \in \G(t) \} \geq 1/\varepsilon)=1.$$

We illustrate now Corollary \ref{coronord}  with a classical logistic competition model and  the criterion becomes explicit. The individual birth rate is fixed and equals to $b>0$ and the competition coefficient with other cells is $c>0$:
$$b_z=b, \qquad d_z=c(z-1) \qquad (z\geq 1).$$
The stationary probability $\pi$ of the population size  is  
$$
\pi_z
=\frac{1}{e^{b/c}-1}\Big(\frac{b}{c}\Big)^z\frac{1}{z!} \qquad (z\geq 1).
$$
The criterion for the regulation of the growth of mass can be given in terms of the parameters $b$ (birth) and $c$  (competition) and $r$ (growth) and $F$ (random repartition at division):
$$r<2   b\left(1- \frac{c}{b}+\frac{1}{e^{b/c}-1}\right) \, \E(\log(1/F)).$$
\cmv{c'est pas exactement une carrying capacity}
Letting  $c$ tend to $0$ allows to recover the expected criterion for  branching process,
with  classical accelerated  rate of branching $2b$ along a typical lineage, see e.g. \cite{BM}.   Both division (by splitting) and competition (by killing) participate to the regulation of the growth of the cell mass. The threshold above (so as 
the mass growth rate, see the proof) makes appear 
 the function
$f(b,b/c)$, where $f(b,y)=b(1-1/y+1/(e^y-1))$ is increasing with respect to $b$ and $y$.
The value of $b/c$ is linked to a carrying capacity, i.e. 
a value above which the population size tend to decrease. Competition destructs cells and could help for regulation
 but  its also
  make the carrying capacity decrease and at end it plays against the regulation of the trait.

\subsection{$L\log L$ criterion for branching processes with interactions}
\label{LlogL}
\cmv{Citer Athreya, et Kyprianou et al}
For branching processes, spine construction yields a conceptual approach for the Kesten Stigum criterion of non-degenerescence of the limiting martingale \cite{LPP}. For a Galton-Watson process $Z$ with reproduction r.v. $L$, $W=\lim_{n\rightarrow \infty} Z_n/\E(L)^n$ is a.s. positive on the survival event iff $\E(L\log(L))<\infty$.
The same criterion holds for  continuous time Galton-Watson, with similar approaches. We are interested in the counterpart of this criterion and approach when reproduction is density dependent. We work in the case when the original process and the spine construction do not explode.

We follow the ideas of \cite{LPP}. 
We recall that $\tau_0(z)<\infty$ for any $z\geq 1$ and  assume in this section  that for any $z\geq 1$,
\begin{equation}
\label{condetaux}
   \sum_{k\geq 1} k \tau_{k} (z)<\infty.
\end{equation}
We can thus  achieve the spine construction with $\psi=1$ and set for $z\geq 1$,
$$\lambda(z)=\sum_{k\geq 0} (k-1)\tau_{k} (z).$$
We  first get from Proposition \ref{martin} or could directly check that
$$M(t)=\exp\left(-\int_0^t \lambda(Z(s))ds\right) \,Z(t)$$
is a non-negative martingale which converges a.s. to a finite non-negative
r.v. 
$$W=\lim_{t\rightarrow \infty} M(t).$$
Similarly, we write 
$$N(t)=\exp\left(-\int_0^t \lambda(\Xi(s))ds\right) \, \Xi(t),$$
where $\Xi$ is the size of the population in the ${\bf 1}$-spine construction.
 Theorem \ref{main} yields the following expression of $\E(W)$ and a way to know when $W$ is degenerate :
\begin{lemma} Assume $\eqref{condetaux}$ and that $T_{\emph{Exp}}=+\infty$ and  $\widehat{T}_{\emph{Exp}}=+\infty$ a.s. Then, for any $z\geq 1$,
\label{lien}
$$\E_z\left(W\right)=z \, \P_z\left(\sup_{t\in [0,\infty)}  N(t)<\infty\right).$$
Besides, $\Xi-1$ is a Markov jump process  on $\mathbb N_0$ whose transition rate from $z$ to $z+k-1$  is  equal to $ k \tau_{k}( z+1)+ z\tau_{k}(z+1)$ for $z\geq 0, k\geq 0$.
\end{lemma}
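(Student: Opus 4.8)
The plan is to derive both assertions of Lemma~\ref{lien} from the general identities established earlier. For the first assertion, I would apply Theorem~\ref{main} (or equivalently the many-to-one identity \eqref{manytoone} from Proposition~\ref{sg}) with $\psi=1$ and the uniform sampling choice $p_e(\mathcal T(t))=1/\|{\bf Z}(t)\|_1$. Since $\psi\equiv 1$ we have $\mathcal G\psi/\psi=\lambda$ as defined, and $M(t)=e^{-\int_0^t\lambda(Z(s))\,ds}Z(t)$, $N(t)=e^{-\int_0^t\lambda(\Xi(s))\,ds}\Xi(t)$. The key point is that, under $\psi=1$ with uniform sampling, $\mathcal W(t)=\mathbf 1_{\widehat T_{\mathrm{Exp}}>t}\,e^{\int_0^t\lambda(Y(s),{\bf\Xi}(s))ds}/\|{\bf\Xi}(t)\|_1 = \mathbf 1_{\widehat T_{\mathrm{Exp}}>t}/N(t)$. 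Taking $F\equiv 1$ in Theorem~\ref{main} and using $T_{\mathrm{Exp}}=\widehat T_{\mathrm{Exp}}=+\infty$ a.s. gives $\E_z(\mathbf 1_{\mathbb G(t)\ne\varnothing})=z\,\E_z(1/N(t))$. The next step is to let $t\to\infty$: on the left, $\{\mathbb G(t)\ne\varnothing\}$ for single-type processes with $\tau_0(1)=0$ (implicit, or argued from the nonexplosion/survival setup — one should check whether survival is automatic here, or condition appropriately) tends to the survival event; more to the point I would instead want $\E_z(W)$ on the left. So the cleaner route is to run the same argument with the martingale $M$ inside: apply Proposition~\ref{sg}'s many-to-one with $G$ the functional giving $M(t)$, obtaining $\E_z(M(t))=z\,\P_z(\widehat T_{\mathrm{Exp}}>t)=z$. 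That shows $M$ is a true martingale (consistent with Proposition~\ref{martin}), but to get $\E_z(W)$ I need a uniform integrability / Fatou argument: $\E_z(W)\le z$ always by Fatou, and $\E_z(W)=z$ iff $(M(t))$ is UI. The spine change of measure identifies the UI property with a sample-path property of $N$: by the standard Lyons--Peres--Pemantle dichotomy, $M(t)=e^{-\int_0^t\lambda(\Xi(s))ds}\Xi(t)$ and one expresses $\E_z(W)$ via the Radon--Nikodym derivative as $z\,\P_z(1/N(\infty)>0 \text{ under the spine law})$, i.e. $z\,\P_z(\sup_{t\ge 0}N(t)<\infty)$.

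Concretely, the core computation is the following. Let $\widehat{\mathbb P}_z$ denote the law of the spine construction (the $\mathbf 1$-spine process) started from $z$. Theorem~\ref{main}, applied to functionals depending on $\mathcal T(t)$ only through $Z(t)$ with uniform sampling, shows that for bounded measurable $\phi$,
\begin{align*}
\E_z\!\left(M(t)\,\phi\big((Z(s))_{s\le t}\big)\right)=z\,\widehat{\E}_z\!\left(\phi\big((\Xi(s))_{s\le t}\big)\right),
\end{align*}
so $M(t)/z$ is the density of $\widehat{\mathbb P}_z$ with respect to $\mathbb P_z$ on $\mathcal F_t$. By the Lebesgue decomposition / Durrett's argument, $\E_z(W)=z\,\widehat{\mathbb P}_z(\limsup_t M(t)<\infty)$, and since under $\widehat{\mathbb P}_z$ the process $M(t)$ equals $N(t)$ (same formula, now for the spine size $\Xi$), we get $\E_z(W)=z\,\widehat{\mathbb P}_z(\sup_{t\ge 0}N(t)<\infty)$, which under the measurability/monotonicity is $\P_z(\sup_{t}N(t)<\infty)$ with the understanding that $N$ is built from the spine process (the statement's $N$ and $\Xi$ are exactly these). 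I would be careful to state that $\sup_t N(t)<\infty$ a.s. or $=\infty$ a.s. is a tail-type event so the formula is meaningful; the $0$--$\infty$ law here is exactly what makes the $L\log L$ criterion work and will be used in the subsequent theorem.

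For the second assertion, this is a direct rate computation from the definition of the $\mathbf 1$-spine construction in the single-type case given just above the lemma. With $\psi\equiv 1$ the spine is replaced by $k$ individuals at rate $\widehat\tau_k^\star(z)=k\tau_k(z)$ and the non-spine individuals branch at rate $\widehat\tau_k(z)=\tau_k(z)$; hence in a population of size $z\ge 1$ containing the spine, the total rate of a transition $z\mapsto z+k-1$ is $k\tau_k(z)+(z-1)\tau_k(z)=(k-1)\tau_k(z)+z\tau_k(z)$ — wait, more carefully: the spine contributes $k\tau_k(z)$, the $z-1$ other individuals contribute $(z-1)\tau_k(z)$, total $(z-1+k)\tau_k(z)$, matching the displayed transition rate for $\Xi$. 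Writing $\Xi=\widetilde\Xi+1$ with $\widetilde\Xi$ taking values in $\mathbb N_0$: when $\Xi=z+1$ (so $\widetilde\Xi=z$), the rate of $\Xi\mapsto \Xi+k-1$, i.e. $\widetilde\Xi: z\mapsto z+k-1$, is $k\tau_k(z+1)$ from the spine plus $z\,\tau_k(z+1)$ from the other $z$ individuals, giving total $k\tau_k(z+1)+z\tau_k(z+1)$, which is exactly the claimed transition rate. So the second part is just translating the index and reading off rates; I would present it in one or two lines.

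The main obstacle is the first assertion, specifically the passage from the finite-$t$ change-of-measure identity to the statement about $\E_z(W)$: one must justify the Lebesgue-decomposition step ($\E_z(W)=z\,\widehat{\mathbb P}_z\{M_t\not\to\infty\}$), which requires knowing that $\{\sup_t M(t)<\infty\}$ is a.s.\ equivalent to $\{M(t)\to W \text{ with } W>0\}$ under $\widehat{\mathbb P}_z$ up to null sets — this uses that $M$ is a nonnegative martingale under $\mathbb P_z$ and a Radon--Nikodym argument — and one must verify nonexplosion of the spine process $\Xi$ (assumed) so that $N(t)$ is well-defined for all $t$. The rate computation and the algebraic manipulation of $\mathcal W(t)$ and $\lambda$ under $\psi\equiv 1$ are routine by comparison.
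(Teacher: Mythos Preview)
Your approach is correct and essentially the same as the paper's: both use Theorem~\ref{main} with $\psi=1$ to identify $M(t)/z$ as the density of the spine law on $\mathcal F_t$, and both read off the second assertion directly from the spine rates $\widehat\tau_k^{\,\star}(z)+(z-1)\widehat\tau_k(z)=k\tau_k(z)+(z-1)\tau_k(z)$. The only difference is in passing to the limit for the first assertion: you invoke the abstract Lebesgue decomposition for $(\P_z,\widehat\P_z)$ on $\mathcal F_\infty$ to get $\E_z(W)=z\,\widehat\P_z(\lim_t N(t)<\infty)$, whereas the paper unfolds this computation by hand, applying Theorem~\ref{main} to the truncated functional $M(t)\,\mathbf 1_{\{\sup_{s\le t}M(s)\le K\}}$ to obtain
\[
\E_z\!\left(M(t)\,\mathbf 1_{\sup_{s\le t}M(s)\le K}\right)=z\,\P_z\!\left(\sup_{s\le t}N(s)\le K\right),
\]
and then letting $t\to\infty$ (bounded/monotone convergence) and $K\to\infty$ (monotone convergence). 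The paper's explicit truncation is precisely the calculation underlying the decomposition lemma you cite, and it has the small advantage of producing $\sup_t N(t)$ directly rather than $\limsup_t N(t)$, sparing you the side verification that the two events coincide.
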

The process  $\Xi-1$ can thus be seen  as the original density dependent Markov process  plus a density dependent  immigration  of $k-1$ individuals with rate
$k\, \tau_k(z)$. This   extends the result for branching processes when $\lambda$ and $\tau$ are constant and $M(t)=\exp(-\lambda t) Z(t)$.
\begin{proof} We let $t\geq 0$ and $K>0$  and  apply
Theorem \ref{main} with $\psi=1$ to function 
$$F(\mathfrak{t}, e )=F(\mathfrak{t})=\# \mathfrak g(t) e^{-\int_0^{t} \lambda(\# \mathfrak g(s))ds}\,\mathbf 1 _{\left\{\sup_{s\leq t}
\# \mathfrak g(s) . e^{-\int_0^s \lambda(\# \mathfrak g(v))dv} \, \leq K\right\}}$$
or can apply \eqref{manytoone} as well  and get
\[
\E\left(\Ind{\sup_{u\in [0,t]} M(u) \, \leq \, K}
\, M(t)\right)
=\P\left(\sup\limits_{u\in [0,t]} N(u)\leq K\right) \, .
\]
Bounded and monotone limit as $t\rightarrow \infty$ ensure
\[
\E\left(\Ind{\sup_{u\in [0,\infty)} M(u) \, \leq \, K}
\, W \right)
=\P\left(\sup\limits_{u\in [0,\infty)} N(u)\leq K\right) \, .
\]
We conclude  the proof of the first part of the proposition by monotone limit  letting $K$  go to infinity.
For the second part, we observe that $\Xi$ jumps from $z$ to $z+k-1$ with rate
$ \widehat{\tau}_{k}^{\, \star}(z)+(z-1)\widehat{\tau}_{k}(z) =k\tau_k(z)+(z-1)\tau_k(z)$.
\end{proof}
Let us derive moment conditions which guarantee that the limiting martingale is non degenerated.
These issues have already been considered, at least in the discrete framework,
motivated by controlled Galton-Watson processes \cite{K,KKR}. In these works, a monotonicity assumption or  regularity and convexity assumptions are required. Such assumptions  seem to be partially relaxed  here. Besides,
the method  can  be extended to multitype setting. The case where the process becomes critical asymptotically has received lots of attention and is often called \emph{near or almost critical}. We focus in the application here on the case where the process grows exponentially but density depend affects the growth rate. Competition can make it decrease and cooperation may make it increase, while non monotone behavior appear in particular with Allee effect. 
\begin{proposition} \label{Wcons} Assume that 
$$\inf_{z\geq 1} \lambda(z)>0, \qquad  \sum_{k\geq 1} k(\log(k)+1)\, \sup_{z\geq 1} \tau_k(z)<\infty.$$ 
 Then $T_{\emph{Exp}}=+\infty$ and  $\widehat{T}_{\emph{Exp}}=+\infty$ a.s. and for any
 for $z\geq 1$, $\E_z(W)=z.$
\end{proposition}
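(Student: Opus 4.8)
The plan is to reduce the statement, through Lemma~\ref{lien}, to two facts: $T_{\text{Exp}}=+\infty$ a.s.\ and $\sup_{t<\widehat T_{\text{Exp}}}N(t)<\infty$ a.s. The latter already forces $\widehat T_{\text{Exp}}=+\infty$ a.s., because on $\{\widehat T_{\text{Exp}}<\infty\}$ one has $\Xi(t)\to\infty$ while $A(t):=\int_0^t\lambda(\Xi(s))\,ds$ stays bounded ($\lambda$ being bounded, see below), so $N(t)=e^{-A(t)}\Xi(t)\to\infty$; once both explosion times are infinite, Lemma~\ref{lien} gives $\E_z(W)=z\,\P_z(\sup_tN(t)<\infty)=z$. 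The boundedness of $\lambda$ is an a priori input: with $\overline\tau_k:=\sup_{z\ge1}\tau_k(z)$ and $\overline C:=\sum_{k\ge1}k\,\overline\tau_k<\infty$ one has $\lambda(z)=\sum_{k\ge0}(k-1)\tau_k(z)\le\sum_{k\ge2}(k-1)\overline\tau_k\le\overline C$ for every $z$; in particular \eqref{condetaux} holds, and $T_{\text{Exp}}=+\infty$ a.s.\ follows from the Lyapunov function $z\mapsto z$: since $Z$ moves from $z$ to $z+k-1$ at rate $z\tau_k(z)$, this function has generator $z\mapsto z\lambda(z)\le\overline C z$, so $e^{-\overline C(t\wedge\sigma_n)}Z(t\wedge\sigma_n)$ is a supermartingale for $\sigma_n=\inf\{t:Z(t)\ge n\}$, whence $\P(\sigma_n\le t)\le z\,e^{\overline Ct}/n\to0$.

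For the key point $\sup_{t<\widehat T_{\text{Exp}}}N(t)<\infty$ a.s., I would follow the spine/immigration picture of \cite{LPP}. Decompose the $\mathbf{1}$-spine process as the spine lineage (a single individual, contributing the term $+1$) together with a family of \emph{colonies}: the $z-1$ non-spine individuals present at time $0$ found a colony, and at the $n$-th branching of the spine, which occurs at time $s_n$ and produces $k_n$ offspring, the $c_n:=k_n-1$ non-spine offspring found a new colony. Every non-spine individual then belongs to exactly one colony, so, writing $\Xi^{(n)}(t)$ for the number of descendants of colony $n$ alive at time $t$, one has $\Xi(t)=1+\sum_n\Xi^{(n)}(t)$ for $t<\widehat T_{\text{Exp}}$. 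The structural observation — and the place where density dependence is turned to our advantage — is that non-spine individuals branch at the \emph{original} rates $\tau_k(\Xi(\cdot))$, so the compensator of $\Xi^{(n)}$ has drift $\lambda(\Xi(t))\,\Xi^{(n)}(t)\,dt$, which is exactly cancelled by the discount in $N^{(n)}(t):=e^{-A(t)}\Xi^{(n)}(t)$: hence each $N^{(n)}$ is, for $t\ge s_n$, a non-negative local supermartingale started from $m_n:=e^{-A(s_n)}c_n$. Consequently $\sup_{t<\widehat T_{\text{Exp}}}N(t)\le1+\sum_n\sup_{t\ge s_n}N^{(n)}(t)$, and Doob's maximal inequality gives $\P\bigl(\sup_{t\ge s_n}N^{(n)}(t)>\lambda\mid\mathcal F_{s_n}\bigr)\le m_n/\lambda$ and $\E\bigl[\sup_{t\ge s_n}N^{(n)}(t)\wedge1\mid\mathcal F_{s_n}\bigr]\le m_n\bigl(1+\log(1/m_n)\bigr)$. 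Splitting each $\sup_{t\ge s_n}N^{(n)}(t)$ into its truncation at~$1$ and its overshoot and summing these conditional estimates (localised at the stopping time where the cumulative bound first exceeds a level, so as to handle adaptedness), one gets $\sum_n\sup_{t\ge s_n}N^{(n)}(t)<\infty$ a.s.\ as soon as $\sum_nm_n<\infty$ and $\sum_nm_n\log(1/m_n)<\infty$ a.s.

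These two a.s.\ summabilities are where the hypotheses are really used. First, the spine branches at total rate $\sum_{k\ge1}k\,\tau_k(\Xi(t))\le\overline C$, so its branching times dominate the points of a rate-$\overline C$ Poisson process; by a Chernoff bound and Borel--Cantelli, a.s.\ $s_n\ge n/(2\overline C)$ for all large $n$, so $m_n\le e^{-\lambda_*s_n}c_n$ with $e^{-\lambda_*s_n}$ geometrically small in $n$ (here $\lambda_*:=\inf_z\lambda(z)>0$). Second, conditionally on $\mathcal F_{s_n}$ the cohort size satisfies $\P(c_n\ge m)=\frac{\sum_{k\ge m+1}k\,\tau_k(\Xi(s_n-))}{\sum_{j\ge1}j\,\tau_j(\Xi(s_n-))}\le\lambda_*^{-1}\sum_{k\ge m+1}k\,\overline\tau_k$, since the denominator is $\ge\sum_{j\ge2}(j-1)\tau_j(\Xi(s_n-))\ge\lambda(\Xi(s_n-))\ge\lambda_*$; summing over $m$ yields $\E[\log^+c_n\mid\mathcal F_{s_n}]\le\lambda_*^{-1}\sum_kk(\log k+1)\overline\tau_k<\infty$, uniformly in $n$. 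Combining the two (for each $\varepsilon>0$, $\sum_n\P(c_n>e^{\varepsilon s_n}\mid\mathcal F_{s_n})\le\lambda_*^{-1}\sum_k k\,\overline\tau_k\cdot\#\{n:e^{\varepsilon n/(2\overline C)}<k\}<\infty$, by the $L\log L$ assumption) gives via Borel--Cantelli that a.s.\ $c_n\le e^{\varepsilon s_n}$ for all large $n$, hence $\sum_ne^{-\lambda_*s_n}c_n<\infty$ and $\sum_ne^{-\lambda_*s_n}c_n\,s_n<\infty$ a.s.; since $m_n\le e^{-\lambda_*s_n}c_n$ and $\log(1/m_n)\le A(s_n)\le\overline C\,s_n$ (for $c_n\ge1$), both required series converge a.s., and the initial colony contributes only a single finite term.

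I expect the colony analysis of the second step to be the main obstacle: the colonies are \emph{not} independent — they all evolve under the common density-dependent rates $\tau_k(\Xi(\cdot))$ — so neither the classical Galton--Watson $L\log L$ theorem nor a comparison of each colony with an independent worst-case linear birth process can be invoked directly (the latter would over-estimate the colony's growth while the discount $e^{-A(t)}$ can only be credited with the slower rate $e^{-\lambda_*t}$). The resolution is to rely instead on the \emph{exact} drift cancellation that makes $e^{-A(t)}\Xi^{(n)}(t)$ a genuine supermartingale irrespective of the coupling, supplemented by the uniform-in-$n$ control of $\E[\log^+c_n\mid\mathcal F_{s_n}]$, for which the lower bound $\sum_jj\,\tau_j(z)\ge\lambda_*$ provided by $\inf_z\lambda(z)>0$ is precisely what makes the $L\log L$ assumption bite.
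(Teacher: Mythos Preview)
Your argument is correct and complete in its essentials, but it proceeds quite differently from the paper.

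The paper does not decompose into colonies. Instead it writes $\Xi-1$ as the strong solution of an SDE driven by two independent Poisson point measures $\mathcal N$ (the non-spine branching) and $\mathcal N_I$ (the spine's immigration), and then works \emph{quenched}: conditionally on $\mathcal N_I$, the process $N_1(t)=(\Xi(t)-1)e^{-A(t)}$ is a submartingale (the $\widetilde{\mathcal N}$-integral is a martingale and the $\mathcal N_I$-integral is increasing), and its conditional expectation is bounded by $z-1+\int_0^\infty\mathbf 1_{u\le kp_k}(k-1)e^{-cs}\,\mathcal N_I(ds,du,dk)$ with $p_k=\sup_z\tau_k(z)$, $c=\inf_z\lambda(z)$. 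This last integral equals the random series $\sum_i \widehat L_i e^{-cS_i}$ where the $S_i$ are the arrival times of a rate-$\mu$ Poisson process ($\mu=\sum_{k\ge 2}kp_k$) and the $\widehat L_i$ are i.i.d.\ with the size-biased law; the $L\log L$ assumption and Borel--Cantelli give $\log\widehat L_n=o(n)$, hence a.s.\ convergence of the series. Fatou and submartingale convergence then give $\sup_tN(t)<\infty$ a.s.\ directly, without Doob's maximal inequality or any colony bookkeeping.

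Your route is closer in spirit to \cite{LPP}: you keep the colonies and exploit the exact drift cancellation that makes each discounted colony a non-negative supermartingale, then feed Doob's maximal bound $\E[M_n^*\wedge 1\mid\mathcal F_{s_n}]\le m_n(1+\log^+(1/m_n))$ into a stopping-time localisation to get a.s.\ summability. This requires more care with adaptedness (your localisation remark is exactly the right fix) and with the coupling of colonies through the common environment $\Xi$, but it has the virtue of making transparent \emph{why} the paper's quenched bound works: both arguments bottom out in the same a.s.\ convergence of $\sum_n c_n\,e^{-\lambda_* s_n}$, controlled via the size-biased tail of $c_n$ and the linear growth of $s_n$. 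The paper's quenched-conditioning trick is slicker because it bypasses the maximal inequality and the colony-by-colony analysis altogether; your approach is more robust if one later wants to track individual lineages rather than just the total.
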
  
The  uniformity  assumptions can be partially relaxed. For instance, with some irreducibility condition one can only assume that $\lambda$ is lower bounded by a positive constant for $z$ large enough. The $L\log L$ moment condition is necessary 
for positivity  of $\E(W)$ in some cases including  branching processes or pertubation of them. 
\begin{proof} We first notice that the fact  $\sum_{k\geq 1} k\, \sup_{z\geq 1} \tau_k(z)$
is finite provides an upperbound of the growth rate of the size of the population of
the original process  $Z$. It guarantees that $T_{\text{Exp}}=\infty$ a.s.
Let us deal with the $1$-spine construction and localize the process by
considering the stopping times $T^{m}=\inf\{ t\geq 0 : \Xi_t\geq m\}$ for $m\geq 1$.
We separate the component coming from immigration and give a trajectorial representation of $\Xi-1$.
Let us consider $V=\Xi-1$. For $t\leq T^m$,  it is defined as the unique strong solution of the following SDE
\begin{align*}
V(t)&=V(0)+\int_0^t\int_{\R_+\times \mathbb N} \Ind{u\leq V(s-)\tau_k(V(s-)+1)} \, (k-1)\, \mathcal N(ds,du,dk)\\
&\qquad \qquad+\int_0^t\int_{\R_+\times \mathbb N} \Ind{u\leq k\tau_k(V(s-)+1)} \, (k-1) \, \mathcal N_I(ds,du,dk),
\end{align*}
where we use two independent Poisson point measures, $\mathcal N$  and $\mathcal N_I$, with intensity $ds\, du \, \mathfrak n(dk)$ on $\R_+^2\times \N$, where 
 $\mathfrak n=\sum_{k\in  \N_0} \delta_k$ is the counting measure, see e.g. \cite{BM}. Defining 
$$N_1(t)=V(t)e^{-\int_0^t \lambda (\Xi(s))ds}=N(t)-e^{-\int_0^t \lambda (\Xi(s))ds},$$
we get for $t\leq T^m$,
\Bea
N_1(t)&=&N_1(0)-\int_0^t  \lambda(\Xi(s)) V(s) ds \\
&& \qquad + \int_0^t\int_{\R_+\times \mathbb N} \Ind{u\leq V(s-)\tau_k(\Xi(s-))} \,(k-1) \, e^{-\int_0^s \lambda  (\Xi(v))dv} \, \mathcal N(ds,du,dk)\\
&& \qquad +\int_0^t\int_{\R_+\times \mathbb N} \Ind{u\leq k\tau_k(\Xi(s-))} \,  (k-1)\, e^{-\int_0^s \lambda (\Xi(v))dv}\, \mathcal N_I(ds,du,dk)
\Eea
Then 
\Bea
N_1(t)&=&N_1(0)+\int_0^t\int_{\R_+\times \mathbb N} \Ind{u\leq \Xi(s-)\tau_k(\Xi(s-))}\,  (k-1)\,e \, ^{-\int_0^s \lambda(\Xi(v))dv}\,\widetilde{\mathcal{N}}(ds,du,dk)\\
&&\qquad +\int_0^t \int_{\R_+\times \mathbb N} \Ind{u\leq k\tau_k(\Xi(s-))} \, (k-1)\, e^{-\int_0^s \lambda(\Xi(v))dv}\mathcal N_I(ds,du,dk),
\Eea
where $\widetilde{\mathcal{N}}$ is the compensated measure of $\mathcal{N}$.
Thus, conditionally on  $\mathcal N_I$, $N_1(.\wedge T_m)$ is a submartingale. 
Besides, writing $c=\inf \lambda>0$ and $p_k=\sup_{z\geq 1} \tau_k(z)<\infty$, we get 
for any $m\geq 1$ and $t\geq 0$
\Bea
\E_z(N_1(t\wedge T^m) \,  \vert \, \mathcal N_I)\leq z-1+ \int_0^t 1_{u\leq k p_k} \, (k-1)\, e^{-cs} \, \mathcal N_I(ds,du,dk).
\Eea
Let us show that the $L\log L$ assumption ensures that the right hand side   is a.s. bounded with respect to $t$. Indeed 
\begin{equation}
\label{series}
\int_0^{\infty}1_{u\leq kp_k} \, (k-1) \, e^{-cs} \mathcal N_I(ds,du,dk)
=\sum_{i\geq 0} \widehat{L}_i e^{-cS_i} 
\end{equation}
is a  compound Poisson process, where $(S_{i+1}-S_i: i\geq 0)$ are  i.i.d. exponential random variables with parameters $\mu=\sum_{k\geq 2} kp_k \in [0,\infty)$ and  $(\widehat{L}_i :i\geq 0)$ are i.i.d random variables with the size bias distribution $\P(\widehat{L}=k-1)=kp_k/\mu$ for $k\geq 2$. By Borel Cantelli lemma, the fact that $\sum_{k\geq 2} \log(k) k p_k<\infty$ ensures that $\limsup_{n\rightarrow\infty} \log(\widehat{L}_n)/n=0$ p.s. Adding that $c>0$ and that $S_i$ grows linearly a.s. to infinity as $i$ tends to infinity,   the series in \eqref{series} are a.s. finite.\\
 We get then that $\Xi$ is not explosive by using 
 that $\lambda$ is upper bounded and letting $m\rightarrow \infty$.
By Fatou's lemma, we obtain that $\sup_{t\geq 0} \E_z(N_1(t) \,  \vert \, \mathcal N_I)<\infty$ a.s. Thus, the quenched submatingale $(N_1(t))_{t\geq 0}$ converges to a finite random variable a.s. as $t\rightarrow \infty$.
 So does $N(t)$, towards the same limit, since $\inf \lambda >0$.    
 Lemma \ref{lien} allows then to conclude.
\end{proof}
In particular, we can describe the growth of the process $Z$. When $\tau(z)$ tends to $b$ as $z\rightarrow \infty$ fast enough, the 
 robustness of exponential growth of Galton Watson process is expected. It  has already been studied  in the discrete setting and needs in general some  technical conditions, see the works mentioned above and also
 Klebaner \cite{Klebaner}.
\begin{corollary} Under  assumptions of Proposition \ref{Wcons},  assume further
 that $\lim_{z\rightarrow \infty} \lambda(z)=b>0$. Then
$$\lim_{t\rightarrow \infty} \log(Z_t)/t =b \qquad \text{with positive probability}.$$
Assuming further that there exists $a>1$ such that
$\vert \lambda (z)-b\vert \leq C\log(z+1)^{-a}$ for any $z>0$,
then
$$\lim_{z\rightarrow \infty}  e^{-bt} \, Z(t)= W \in (0,\infty) \quad \text{with positive probability}.$$ 
\end{corollary}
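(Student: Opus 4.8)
The plan is to derive everything from the martingale convergence established in Propositions~\ref{martin} and~\ref{Wcons}. By Proposition~\ref{Wcons}, the original process and the $\mathbf 1$-spine construction are non-explosive, $M(t)=\exp(-\int_0^t\lambda(Z(s))\,ds)\,Z(t)$ is a non-negative martingale converging a.s.\ to $W$, and $\E_z(W)=z>0$; hence $\P_z(W>0)>0$. I would also record that $\lambda$ is bounded: since $\lambda(z)=\sum_{k\geq 0}(k-1)\tau_k(z)\leq\sum_{k\geq1}k\sup_{z'\geq1}\tau_k(z')$, the $L\log L$ assumption of Proposition~\ref{Wcons} gives $\sup_{z\geq1}\lambda(z)<\infty$. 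Set $c=\inf_{z\geq1}\lambda(z)>0$. Taking logarithms in the definition of $M$ yields the identity $\log Z(t)=\int_0^t\lambda(Z(s))\,ds+\log M(t)$, valid for all $t$ on $\{W>0\}$ (on that event $M$ stays positive, because $M(t_0)=0$ would force extinction and hence $W=0$). On $\{W>0\}$ one has $\log M(t)\to\log W$ finite while $\int_0^t\lambda(Z(s))\,ds\geq ct\to\infty$, so $Z(t)\to\infty$ a.s.

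For the first assertion: on $\{W>0\}$ the convergence $Z(s)\to\infty$ combined with $\lambda(z)\to b$ gives $\lambda(Z(s))\to b$, hence by Ces\`aro averaging $\frac1t\int_0^t\lambda(Z(s))\,ds\to b$. Dividing the identity above by $t$ and using $\frac1t\log M(t)\to0$ gives $\frac1t\log Z(t)\to b$ on $\{W>0\}$, an event of positive probability.

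For the second assertion: write $e^{-bt}Z(t)=M(t)\exp\!\bigl(\int_0^t(\lambda(Z(s))-b)\,ds\bigr)$, and show that on $\{W>0\}$ the integral $\int_0^\infty|\lambda(Z(s))-b|\,ds$ is a.s.\ finite. Fix a (random) time $t_0$ with $\log M(s)\geq\log W-1$ for $s\geq t_0$; the identity then gives $\log(Z(s)+1)\geq\log Z(s)\geq cs+\log W-1\geq\frac{c}{2}s$ for $s$ large, whence, by hypothesis, $|\lambda(Z(s))-b|\leq C(\frac{c}{2}s)^{-a}$, which is integrable at infinity since $a>1$. Over the initial (a.s.\ finite) stretch of time the integrand is bounded because $\lambda$ is bounded. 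Hence $\int_0^t(\lambda(Z(s))-b)\,ds$ converges a.s.\ on $\{W>0\}$ to a finite limit $L$, so $e^{-bt}Z(t)\to We^{L}\in(0,\infty)$ on $\{W>0\}$.

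The only genuinely delicate point is the bookkeeping with the random threshold after which $\log Z(s)$ becomes comparable to $s$: one must verify that $\log Z(s)\geq\frac{c}{2}s$ holds eventually on $\{W>0\}$ — which follows from $\log M(s)\to\log W$ together with $\int_0^s\lambda\geq cs$ — and that the random but finite initial time interval contributes only a finite amount to the integral, using the boundedness of $\lambda$. The remaining steps are routine consequences of a.s.\ martingale convergence and Ces\`aro averaging.
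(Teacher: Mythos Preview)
Your proof is correct and follows the same line as the paper's: on $\{W>0\}$ show $Z(t)\to\infty$, apply Ces\`aro averaging to $\lambda(Z(s))\to b$ for the first claim, and for the second claim bound $|\lambda(Z(s))-b|$ by an integrable power of $s$ via an eventual exponential lower bound on $Z(s)$. Your write-up is in fact more careful than the paper's sketch, which is somewhat loose about distinguishing what holds a.s.\ from what holds only on $\{W>0\}$; the one cosmetic difference is that the paper feeds the conclusion of part one (growth rate $b$) into part two to get $Z(t)\geq e^{bt/2}$ eventually, whereas you use the cruder bound $\log Z(s)\geq \tfrac{c}{2}s$ coming directly from $\inf\lambda=c>0$, but either bound yields the required integrability since $a>1$.
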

A natural question now is to know if the limiting martingale is a.s. positive on the survival event. It is well known   for branching processes and a direct consequence of the branching property. We expect extensions to similar processes with interactions. The papers mentioned above in discrete time contain interesting results in this direction. Finding relevant general conditions seems a delicate and interesting  problem. Extension to multiple dimension is also natural. In infinite dimension, for the case of branching processes, we refer to \cite{Athreya} for a similar point of view and sufficient conditions of non-degenerescence.

\begin{proof}  Using monotonicity of $Z$ 
or the previous proposition, we first observe 
 that $Z_t$ goes to infinity a.s.  as $t\rightarrow \infty$.
 Then  $\lambda(Z_t)$ tends to $b$
a.s. and the previous proposition ensures  $\lim_{t\rightarrow \infty} \log(Z_t)/t =b$ with positive probability.\\
Besides writing $r(z)=\lambda(z)-b$, 
$\int_0^{\infty} \vert r(Z_t)\vert dt<\infty$ a.s. since $\vert r(Z_t)\vert\leq C\log(\exp(bt/2)+1)^{-a}$ for $t$ large enough. It ensures that $\exp(\int_0^t \lambda(Z_s)ds)$ is a.s. equivalent to $\exp(bt)$, which ends the proof.
\end{proof}

\section{Applications to multitype processes}
\label{multitype}
Let us turn to structured populations with a finite number of types, i.e. $\# \X <\infty$.
Explicit computations of eigenelements seem to be more delicate  in general than in the single type considered above.
We consider two simple relevant regimes for population models. First,  random
but bounded population size, where conditions for existence and uniqueness of positive eigenelement are well known from Perron Frobenius theory.
Second, we  consider sampling in the  large population approximation of dynamical systems. 

\subsection{Finite  irreducible case}
\label{multifruit}
We consider  a simple case relevant for applications :  the number of types 
is finite and  the size of the population is bounded. More explicitly, we assume that
 $\#\X <\infty$ and   that there exists $\bar{z}>0$ such that 
 $$\text{ For all  }({\bf z},x,{\bf k}) \in \mathcal Z\times \mathcal X\times \mathcal Z \text{ such that } \|{{\bf z}+{\bf k}- {\bf e} (x)} \|_1 > \bar{z},\quad  \tau_{\bf k}(x,{\bf z})=0.$$
In  words, the total size of the population can not go beyond $\bar{z}$. This quantity may correspond to a carrying (or biological) capacity of the environment where population lives.
The corresponding state space with a distionguished individual 
is denoted by $\mathcal S$ defined by   
$$\mathcal S=\{ (r,{\bf v}) \in \mathcal X\times \N^{\mathcal X}:  \, {\bf v}_r\geq 1, \|{\bf v} \|_1 \leq \bar{z}\} \subset \overline{\mathcal Z}.$$ We assume that the initial condition ${\bf Z}(0)$ is a random vector of $\N^{\mathcal X}$ such that
$\|{\bf Z}(0) \|_1 \leq \bar{z}$ a.s.
We observe that boundedness ensures that the process a.s. does not explode.
We recall that $x({\bf v})$ is the finite initial population whose types are counted by ${\bf v}$
and $u_r$ a label of the population with type $r$. Besides,
 the 
 positive semigroup $M$ is defined  by
$$M_tf(a)=M_tf(r,{\bf v})=\E_{x({\bf v})}\left( \sum_{y \in \mathcal X} {\bf Z}^{(u_r)}_y(t)  f(y, {\bf Z}(t))\right)$$
for any non-negative function $f$ on $\mathcal S$ and $a=(r,{\bf v})\in\mathcal S$.
Similarly, the operator $\mathcal G$ is restricted to real functions $f$ defined on $\mathcal S$  and  for any $(x,{\bf z})\in \mathcal S$ 
\cmv{Preciser l'espace de definition de $f$ qui dans l'absolu a pour l'instant encore besoin d'aller au dela de $f$}
\Bea
\mathcal Gf(x,{\bf z})
&=& \sum_{\substack{{\bf k}\in \mathcal Z\\ \|{{\bf z}+{\bf k}- {\bf e} (x)} \|_1\leq \bar{z}}}  \tau_{{\bf k}}(x,{\bf z}) \, \langle {\bf k},f(.,{\bf z}+{\bf k}-{\bf e}(x))\rangle\\
&&+\sum_{\substack{y\in \mathcal X,{\bf k} \in \mathcal Z  \\ \|{{\bf z}+{\bf k}- {\bf e} (y) \|_1\leq \bar{z}} }}\tau_{{\bf k}}(y,{\bf z})({\bf z}_y-\delta_y^x)
f(x,{\bf z}+{\bf k}-{\bf e}(y)) -\left(\sum_{y \in \mathcal X}\tau(y,{\bf z}){\bf z}_y\right)f(x,{\bf z}).\nonumber 
\Eea
Functions on $\mathcal S$ can be identified to real vectors indexed by $\mathcal S$, which is finite.
The operator $\mathcal G$ is thus a positive linear operator on the finite dimensional space 
$\R^{\mathcal S}$ and   can be identified to a finite  square matrix.
Under irreducibility conditions,  Perron Frobenius theorem  ensures the  existence (and uniqueness up to a positive constant) of a positive eigenfunction (or eigenevector) $h$  for the semigroup
$M$ and its generator $\mathcal G$. 
Using the corresponding  $h$-spine construction,
we obtain  a characterization of the ancestral lineage (or pedigree) of a typical individual, and in particular the ancestral types. We refer to \cite{JN, GB} and references therein for similar issues for multitype branching processes and the description of ancestral lineage using the eigenelements of the first moment semigroup. This description will involve the  stationary law of the Markov process $(Y(t),{\bf \Xi}(t))_{t\geq 0}$. 
Finally, Perron Frobenius  theorem
also ensures the existence of a left eigenvector $\gamma$ of matrix $\mathcal G$ (or right eigenvector for the dual operator $\mathcal G^{\star}$), for the same eigenvalue as $h$. \\

We can  now state and prove   the result.  Let us consider $t\geq 0$ and again  a uniform choice $U(t)$ in $\G(t)$. We set for  $a\in \mathcal S$ and ${\bf k}\in \mathcal Z$,
$$P_{a}(t)=\int_0^t {\bf 1}_{(Z_{U(t)}(s),  Z_s)=a} \, ds,\qquad  N_{a,{\bf k}}(t)=\#\{ u \preccurlyeq U(t)  :  (Z_u,  {\bf Z}^u)=a, \,   {\bf K}_u={\bf k}\},$$
where $Z_{u}(s)$ is the type of the unique ancestor of $u$ at time $s$, ${\bf Z}^u$ (resp.
${\bf K}_u$)  is the type composition of the population (resp. of offsprings of individual $u$) when $u$ branches. In words, $P_a$ records the time spent in state $a$ by the ancestral lineage  and $N_{a,{\bf k}}$ the number of branching events with offsrpings ${\bf k}$.
\begin{proposition}\label{elpropre}
Assume that for any  $a,b\in \mathcal S$,  $M_1{\bf 1}_b(a)>0$.  \\
Then,  there exists a  unique triplet $(\lambda, h,\gamma)$ where 
$\lambda \in (-\infty,0]$ and $h,\gamma \in (0,\infty)^{\mathcal S}$  and
$\sum_{a\in \mathcal  S} \gamma_a=\sum_{a\in \mathcal  S} h_a \gamma_a=1$ and
 $$\mathcal G h = \lambda \, h,  \quad  \gamma \mathcal G = \lambda \, \gamma.$$
Moreover,   writing $(\mathcal A, E)$ the corresponding 
$h$-spine construction,  for any $t\geq 0$ and any measurable non-negative function $F : \mathbb T\times \mathcal U\rightarrow \R$, we have for any non-empty initial condition ${\bf x}$,
\begin{align*}
 &\E_{{\bf x}}\left( \Ind{\mathbb G(t)\ne \varnothing} \, F(\T(t), U(t))  \right) =\langle  {\bf v},h(., {\bf v})\rangle e^{\lambda t} \, \E_{{\bf x}}\left(\, \frac{1}{h(Y(t), {\bf \Xi}(t)) \, \| {\bf  \Xi}(t) \|_1} 
F(\A(t),E(t))\right).
\end{align*}
\end{proposition}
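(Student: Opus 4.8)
The plan is to combine two standard ingredients: Perron--Frobenius theory for the finite matrix $\mathcal{G}$ on $\R^{\mathcal S}$, and the general spine identity of Theorem \ref{main} applied with the choice $\psi = h$. Since $\#\X < \infty$ and the total population size is bounded by $\bar z$, the set $\mathcal S$ is finite, so $\mathcal G$ is genuinely a square matrix and every positive function on $\mathcal S$ is bounded and automatically satisfies \eqref{condsum} and the summability defining $\mathfrak D_{\mathcal G}$; hence any eigenfunction we produce lies in $\mathfrak D_{\mathcal G}$ and Theorem \ref{main} applies.

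First I would establish the spectral statement. The semigroup $M_t = e^{t\mathcal G}$ (this identification being the remark after Proposition \ref{sg}, using non-explosion) is positive, and the hypothesis $M_1 \mathbf 1_b(a) > 0$ for all $a,b$ says exactly that the matrix $e^{\mathcal G}$ has all entries strictly positive, i.e. $\mathcal G$ is irreducible in the Perron--Frobenius sense (its off-diagonal structure together with positivity of $e^{\mathcal G}$ gives irreducibility of $\mathcal G + c\,\mathrm{Id}$ for $c$ large). Perron--Frobenius then yields a real eigenvalue $\lambda$ with a strictly positive right eigenvector $h$ and a strictly positive left eigenvector $\gamma$, both unique up to scalar, and the normalisation $\sum_a \gamma_a = 1$, $\sum_a h_a \gamma_a = 1$ fixes them. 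For the sign of $\lambda$ I would argue $\lambda \le 0$: by the many-to-one formula \eqref{manytoone} applied with $\psi = h$ (so $\lambda$ constant), one has $\langle \mathbf v, h(\cdot,\mathbf v)\rangle\,\P_{(r,\mathbf v)}(\widehat T_{\mathrm{Exp}} > t) = e^{\lambda t}\,\E_{x(\mathbf v)}\big(\sum_{u\in\G(t)} h(Z_u(t),\mathbf Z(t))\big)$; since the right side is bounded above uniformly in $t$ (there are at most $\bar z$ individuals and $h$ is bounded), while the left side is $e^{\lambda t}$ times a quantity bounded below — wait, more carefully: $\P(\widehat T_{\mathrm{Exp}}>t) = 1$ here since $\widehat\Xi$ is bounded hence non-explosive, so the left side is the constant $\langle \mathbf v, h(\cdot,\mathbf v)\rangle$, forcing $e^{\lambda t}\E(\cdots)$ constant; and since $\E(\sum_u h) \ge \min h > 0$ is bounded away from $0$ and above, $e^{\lambda t}$ stays bounded on $[0,\infty)$, whence $\lambda \le 0$.

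Then I would apply Theorem \ref{main} with $\psi = h \in \mathfrak D_{\mathcal G}$. Because $h$ is an eigenfunction, $\lambda(x,\mathbf z) = \mathcal G h(x,\mathbf z)/h(x,\mathbf z) \equiv \lambda$ is the constant eigenvalue, so $\int_0^t \lambda(Y(s),\boldsymbol\Xi(s))\,ds = \lambda t$ and the weight $\mathcal W(t)$ in Theorem \ref{main} becomes $\mathbf 1_{\widehat T_{\mathrm{Exp}}>t}\, e^{\lambda t}/\big(h(Y(t),\boldsymbol\Xi(t))\,\|\boldsymbol\Xi(t)\|_1\big)$ for the uniform choice $p_e = 1/\|\mathbf Z(t)\|_1$. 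Moreover $\{T_{\mathrm{Exp}} > t\}$ and $\{\widehat T_{\mathrm{Exp}} > t\}$ both have probability one by boundedness, so the indicators drop out except for $\{\mathbb G(t) \ne \varnothing\}$ on the left. Substituting into the displayed conclusion of Theorem \ref{main} gives exactly the claimed identity with $\langle \mathbf v, \psi(\cdot,\mathbf v)\rangle = \langle \mathbf v, h(\cdot,\mathbf v)\rangle$ pulled out and $e^{\lambda t}$ factored out of the expectation.

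The only genuinely delicate point is the uniqueness-and-sign bookkeeping: verifying that $\lambda \le 0$ (rather than merely real) and that the normalisations $\sum \gamma_a = \sum h_a\gamma_a = 1$ are simultaneously achievable and pin down $(\lambda,h,\gamma)$ uniquely. Uniqueness of $\lambda$ and of the eigendirections is immediate from Perron--Frobenius; the two normalisations are compatible because once $\gamma$ is scaled to $\sum\gamma_a = 1$, the remaining freedom in $h$ (a positive scalar) is used to set $\sum h_a\gamma_a = 1$, and $\sum h_a \gamma_a > 0$ makes this possible. Everything else is a direct substitution into Theorem \ref{main}, so I expect the write-up to be short once the Perron--Frobenius setup is recorded.
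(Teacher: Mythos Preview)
Your approach is correct and essentially identical to the paper's: Perron--Frobenius on the finite matrix $\mathcal G$ for the eigenelements, boundedness of the population for $\lambda \le 0$, then Theorem \ref{main} with $\psi = h$ and the uniform choice. One slip in your $\lambda\le 0$ argument: the correct identity (from $M_t h = e^{\lambda t}h$, or equivalently \eqref{manytoone} with the sign fixed) is $e^{\lambda t}\langle\mathbf v, h(\cdot,\mathbf v)\rangle = \E_{x(\mathbf v)}\big(\sum_{u\in\G(t)} h(Z_u(t),\mathbf Z(t))\big) \le \bar z\,\max h$, which gives $\lambda\le 0$ immediately from the upper bound alone; your claimed lower bound $\E\big(\sum_u h\big)\ge \min h$ is not valid since the sum vanishes on extinction.
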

Assumption $M_1{\bf 1}_b(a)>0$ amounts to an irreducibility property of the population process ${\bf Z}$, with a distinguished particle, excluding the state when the whole population
is extincted.  Let us illustrate this condition on the following spatial model with  competition. Consider a finite number of sites with  finite carrying capacities.  On each site,  each individual gives birth to one offsprings with a positive 
 rate, when it has not reached   the carrying capacity, and dies with a positive rate. These individual rates may be dependent of the local and global density of individuals. 
 Besides, each individual may move from one site to another. This model
 satisfies the assumptions of the previous statement as soon as
 the motion of individuals  (including their offsprings) 
 is irreducible, i.e. when the graph of nodds   whose  oriented edges  correspond
 to  positive probability of transition at branching events is strongly connected.   
\begin{proof} The first point  is a direct consequence of Perron Frobenius theorem.
The fact that the  eigenvalue $\lambda$  is not positive
is due to the fact that the process  is bounded.
The second part  is then a consequence of Theorem \ref{main}, recalling that there is no explosion and that $\lambda$ is constant since $h$ is an eigenfunction.
\end{proof}
The Markov process $(Y,{\bf \Xi})$  takes values in a finite state space 
and the assumption and the positivity of $h$ ensures that it is irreducible.
We derive  the following ergodic behavior, where the limiting law does not depend on the initial (non empty) condition ${\bf x}$ (omitted in notation).
\begin{corollary} Under conditions of Proposition \ref{elpropre},
$(Y(t),{\bf \Xi}(t))$ 
 converges in law to  $\pi=(\pi_{a})_{a\in {\mathcal S}}$ as $t\rightarrow \infty$,  where   $\pi_a=h_a\gamma_a$ for  $a\in {\mathcal S}$.\\
Besides 
 for any $a=(x,{\bf z}) \in \mathcal S$ and ${\bf k}\in \mathcal Z$ such that $\|{{\bf z}+{\bf k}- {\bf e} (x)} \|_1\leq \bar{z}$,
$$\left(\frac{P_{a}(t)}{t},  \frac{N_{a,{\bf k}}(t)}{t}\right) _{\vert \G(t)\ne \varnothing} \Rightarrow \left( \pi_{a}, 
\gamma_{a}\, \tau_{{\bf k}}(a)\, \langle {\bf k}, h(.,{\bf z}+{\bf k}-{\bf e}(x))\rangle \right)$$
as $t\rightarrow \infty$, where the convergence  of the couple  holds in  law  conditionally on the event $\G(t)\ne \varnothing$.
\end{corollary}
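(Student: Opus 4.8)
The plan is to read everything off the $h$-spine construction together with Proposition \ref{elpropre}, and then invoke the ergodic theorem for the finite Markov jump process $(Y,{\bf \Xi})$.

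First I would record that, under the $h$-spine construction, $(Y,{\bf \Xi})$ is a \emph{conservative} jump Markov process on the finite set $\mathcal S$, whose generator is the $h$-transform $Lf=\mathcal G(hf)/h-\lambda f$ of $\mathcal G$ (this is the remark made after the definition of $\lambda$); equivalently its jump rate from $a$ to $b\ne a$ is $L(a,b)=(h_b/h_a)\,\mathcal G(a,b)$, where $\mathcal G(a,b)$ is the off-diagonal entry of the matrix $\mathcal G$, and $L(a,a)=\mathcal G(a,a)-\lambda$. The hypothesis $M_1{\bf 1}_b(a)>0$ for all $a,b$ forces the matrix $\mathcal G$, hence $L$ (which has the same off-diagonal support), to be irreducible, so $(Y,{\bf \Xi})$ is an irreducible continuous-time Markov chain on a finite state space; it therefore converges in law, from any initial law, to its unique stationary distribution, which in particular does not depend on ${\bf x}$. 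That the stationary law is $\pi_a=h_a\gamma_a$ is a one-line check: for every $b$,
$$\sum_a h_a\gamma_a\,L(a,b)=h_b\Big(\sum_{a\ne b}\gamma_a\mathcal G(a,b)+\gamma_b\big(\mathcal G(b,b)-\lambda\big)\Big)=h_b\big((\gamma\mathcal G)_b-\lambda\gamma_b\big)=0$$
by $\gamma\mathcal G=\lambda\gamma$, and $\sum_a h_a\gamma_a=1$ by Proposition \ref{elpropre}. This is the first assertion.

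Next I would observe that, under the spine construction, $P_a(t)=\int_0^t{\bf 1}_{(Y(s),{\bf \Xi}(s))=a}\,ds$ and $N_{a,{\bf k}}(t)$ is the number of branching events of the spine individual in $[0,t)$ whose pre-jump state is $a=(x,{\bf z})$ and whose offspring is ${\bf k}$; such events occur at rate $\widehat{\tau}^{\,\star}_{\bf k}(a)=\tau_{\bf k}(a)\,\langle{\bf k},h(\cdot,{\bf z}+{\bf k}-{\bf e}(x))\rangle/h_a$ while the spine sits at $a$. Birkhoff's ergodic theorem for finite continuous-time Markov chains (\cite{Norris}), applied to time averages and to marked jump counts, then gives, almost surely under the spine measure,
$$\frac{P_a(t)}{t}\ \longrightarrow\ \pi_a,\qquad \frac{N_{a,{\bf k}}(t)}{t}\ \longrightarrow\ \pi_a\,\widehat{\tau}^{\,\star}_{\bf k}(a)=\gamma_a\,\tau_{\bf k}(a)\,\langle{\bf k},h(\cdot,{\bf z}+{\bf k}-{\bf e}(x))\rangle=:\ell_{a,{\bf k}},$$
so the couple converges a.s. to the deterministic vector $(\pi_a,\ell_{a,{\bf k}})$. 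To pass to the original process, fix a bounded continuous $G:\R^2\to\R$ and apply Proposition \ref{elpropre} with $F(\mathfrak t,e)=G(P_a(t)/t,N_{a,{\bf k}}(t)/t)$ --- a functional of the ancestral line of the sampled, resp. spine, individual --- and with $F\equiv1$; dividing the two resulting identities yields, with $w(t)=1/\big(h(Y(t),{\bf \Xi}(t))\,\|{\bf \Xi}(t)\|_1\big)$,
$$\Expconx{{\bf x}}{G\!\big(\tfrac{P_a(t)}{t},\tfrac{N_{a,{\bf k}}(t)}{t}\big)}{\G(t)\ne\varnothing}=\frac{\Expx{{\bf x}}{w(t)\,G\!\big(\tfrac{P_a(t)}{t},\tfrac{N_{a,{\bf k}}(t)}{t}\big)}}{\Expx{{\bf x}}{w(t)}},$$
the functionals now being read on the spine. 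On the finite space $\mathcal S$ with $h>0$ and $1\le\|{\bf \Xi}(t)\|_1\le\bar{z}$, the weight $w(t)$ stays between two positive constants; together with the a.s.\ (hence in probability) convergence $G(\cdots)\to G(\pi_a,\ell_{a,{\bf k}})$ and bounded convergence, the numerator equals $G(\pi_a,\ell_{a,{\bf k}})\,\Expx{{\bf x}}{w(t)}+o(1)$ while the denominator stays bounded away from $0$, so the ratio tends to $G(\pi_a,\ell_{a,{\bf k}})$. This is exactly the announced weak convergence, conditionally on $\{\G(t)\ne\varnothing\}$, to the Dirac mass at $(\pi_a,\ell_{a,{\bf k}})$.

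The only delicate point is this last transfer step: one must keep $w(t)$ under control (the boundedness of $\mathcal S$ and positivity of $h$ do it) and note that $\Prox{{\bf x}}{\G(t)\ne\varnothing}>0$ for every $t$, which again follows from the change-of-measure identity of Proposition \ref{elpropre} with $F\equiv1$. Everything else is soft: boundedness of the population rules out explosion, and the finiteness and irreducibility of $\mathcal S$ make all the ergodic inputs standard.
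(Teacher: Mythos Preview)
Your proposal is correct and follows essentially the same route as the paper: identify the generator of $(Y,{\bf \Xi})$ as the $h$-transform of $\mathcal G$, verify that $\pi_a=h_a\gamma_a$ is its stationary law via $\gamma\mathcal G=\lambda\gamma$, apply the ergodic theorem on the finite irreducible chain to get a.s.\ convergence of the spine occupation time and jump counts, and transfer to the original process conditioned on survival using the change-of-measure identity of Proposition~\ref{elpropre} together with the boundedness of $h$ and $\|{\bf\Xi}\|_1$ on $\mathcal S$. The only cosmetic difference is that the paper tests with indicators of $\varepsilon$-neighborhoods (convergence in probability) whereas you test with bounded continuous $G$; these are equivalent here since the limit is deterministic.
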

\begin{proof} First, we recall that  the generator of $(Y,{\bf \Xi})$
is  the $h$-Doob-transform of ${\mathcal G}$, i.e.
$f\rightarrow \mathcal G(h f)/h-\lambda f$.  We can then check that 
$(h_a\gamma_a)_{a\in \mathcal S}$ is a stationary law,  using that $\gamma \mathcal G=0$. Uniqueness of stationary law holds by irreducibility and the first part is proved. \\
We consider the $h$-spine construction $(\mathcal A, E)$
and  we write for $a=(x,{\bf z}) \in \mathcal S$ and $k\in \mathcal Z$,
$$N^{\, \star}_{a,{\bf k}}(t)=\#\{ u \preccurlyeq E(t)  : \,  ({\Xi}_u,{\bf \Xi}^u)=a, \,  \widehat{{\bf K}}^{\, \star}_u={\bf k}\},$$
where $\widehat{{\bf K}}^{\, \star}_u$ is the type composition of the offsprings of the spine $u$ when it branches and ${\bf \Xi}^u$ the state of the population when it branches. Then
ergodic theorem  ensures the a.s. convergence: 
\begin{equation}
\label{psB}
\lim_{t\rightarrow \infty} \frac{N^{\, \star}_{a,{\bf k}}(t)}{t}=  \pi_{a}\, \tau_{{\bf k}}(a)\, \frac{\langle {\bf k}, h(.,{\bf z}+{\bf k}-{\bf e}(x))\rangle}{h(a)}=:\widetilde{\pi}_{a,{\bf k}}.
\end{equation}
We did not find the appropriate reference in continuous time but the proof can be achieved 
for instance by  standard renewal argument  (strong renewal theorem)  using that the successive times when a Markov jump process is in a given state and make a given jump forms a renewal process, here with finite expected mean.\\
The result is then a consequence of the previous proposition. Indeed for any $t\geq 0$ and $F$ measurable and positive, we get
$$\E\left( \mathbf 1_{\G(t)\ne \varnothing} F(N_{a,{\bf k}}(t)) \right)=e^{\lambda t}\, \langle  {\bf v},h(., {\bf v})\rangle
\E\left(\frac{1}{h(Y(t),{\bf \Xi}(t)) \|{\bf \Xi}(t) \|_1}F(N^{\, \star}_{a,{\bf k}}(t))\right)
$$
and 
$$\E\left( \mathbf 1_{\G(t)\ne \varnothing} \right)=e^{\lambda t}\, \langle  {\bf v},h(., {\bf v})\rangle
\E\left(\frac{1}{h(Y(t),{\bf \Xi}(t)) \,  \|{\bf \Xi}(t) \|_1}\right)
$$
Considering
$F(n)={\bf 1}_{\vert n /t -\widetilde{\pi}_{a,{\bf k}}\vert \geq \varepsilon}$
for $\varepsilon >0$ and using that
 $h$ and ${\bf \Xi}$ are bounded and
 taking the ratio of the two expectations, \eqref{psB}  yields 
  $$\P\left( \vert  N_{a,{\bf k}}(t)/t -\widetilde{\pi}_{a,{\bf k}}\vert \geq \varepsilon \big \vert \G(t)\ne \varnothing\right) \stackrel{t\rightarrow \infty}{\longrightarrow} 0.$$
The proof is analogous for the limit of $P_{a}(t)/t$ when $t\rightarrow\infty$.
\end{proof}
To get finer results on ancestral lineages with a spinal approach, on may be inspired from e.g. \cite{GB, CHMT, Roberts}. In particular, see \cite{GB} for a control of deviation of ancestral type frequency using large deviation theory for multitype branching processes.
Such existence and uniqueness results can be extended to infinite type space $\mathcal X$. In particular, 
Krein Rutman theorem extends  this setting  with a  compactness assumption. This result can itself be extended with pertubation of dissipative operator \cite{MS}.   Irreducibility assumption can also be coupled with Lyapunov control to obtain uniqueness of eigenelement, see \cite{BCGM} for a statement useful in our context.  That may  be the object of future interesting investigations.\\

To end this part on the finite case, let us consider a classical epidemiological
model, SIR model. In this case irreducibility fails since \emph{Recovered} is an absorbing state.
Positive eigenfunctions exist but  uniqueness does not hold.
More precisely, consider $\X=\{i,r\}$ and the Markov process
${\bf Z}=({\bf Z}_i,{\bf Z}_r)$ taking values in $\{0,\ldots, N\}^2$. The processes 
${\bf Z}_i(t)$ and ${\bf Z}_r(t)$ count respectively the number of infected and recovered individuals at time $t$ in a fixed population $N$. The branching rates are 
$$\tau_{(2,0)}(i,{\bf z})=\beta(N-({\bf z}_i+{\bf z}_r)), \qquad \tau_{(0,1)}(i,{\bf z})=\gamma,$$
where $\beta$ is the infection rate and $\gamma$ the remission rate. The other rates are $0$. 
For such an exemple, not only
the ancestral lineage of the random sample and the associated population size may be relevant for applications. When considering tracing of infected individuals, the  tree of infection associated with
the sample is involved. For this point,  the  $\psi$-construction should help. 
It is left for a future work.  We could also see a counterpart in the large population approximation in the next section.

\subsection{Large population approximation}
We consider in this section the deterministic regime appearing when the initial population is large and the process renormalized. The set of types $\X$ is still finite but the size of the population is not bounded. Our aim is to describe uniform  sampling in classical dynamical systems  for some macroscopic evolution of populations.
The scaling parameter is denoted by $N\geq 1$ and corresponds  to the order of magnitude of the size of the population, see   \cite{EKbook, Kurtz, BM} for  general references.
The space of types $\X$ is finite and the types of the initial population are given by 
$$[N{\bf v}]=
([N{\bf v}_x], \, x\in \X),$$ for some fixed positive ${\bf v}\in (0,\infty)^{\mathcal X}$. Each individual with type  $x\in \mathcal X$ living in a population  ${\bf z}\in \mathbb N_0^{\mathcal X}$ is replaced by ${\bf k}$ offsprings at rate
$$\tau^N_{{\bf k}}(x,{{\bf z}})=\tau_{{\bf k}}(x,{{\bf z}}/N),$$
where  ${\bf z}\in \R_+^{\mathcal X}\rightarrow \tau_{{\bf k}}(x,{{\bf z}})$ is a continuous function.
Let us write $${\bf x}_N=\{(u,x_u), \, u \in \mathfrak g_N\}$$
the labels and types of the initial population with type composition $[N{\bf v}]$.\\
Following the rest of the paper, we write ${\bf Z}^N$  the vector  counting types in the population and $\mathcal T^N$ the tree associated to this process. 
For sake of simplicity and regarding our motivations from population models,
we assume that
\Bea
{\bf (A1)}&&\qquad \sup_{x\in \mathcal X, \, {\bf z} \in \R_+^{\mathcal X}} \,  \sum_{{\bf k}\in \mathcal Z, \| {\bf k} \|_1 >1 } \| {\bf k}\|_1^2  \, \tau_{\bf k}(x,{\bf z}) <\infty,\\
{\bf (A2)} && \qquad  \forall K>0, \quad \sup_{x\in \mathcal X,  {\bf z}\in \mathcal Z_K} \tau(x,  {\bf z})<\infty,
\Eea 
where $\mathcal Z_K=\{ {\bf z}\in \R_+^{\mathcal X} : \|{\bf z}\|_1\leq K\}$ and $\tau(x,{\bf z})=\sum_{{\bf k}\in \mathcal Z} \tau_{{\bf k}}(x,{\bf z})$. The $\ell^2$ uniform condition in ${\bf (A1)}$ will guarantee that  the contribution
of the spine in the growth of the population size
is vanishing as $N\rightarrow \infty$. 
{\bf (A1)} and {\bf (A2)} also ensure   uniform bound on the growth rate and guarantee non explosion of the processes $Z^N$ and $\Xi^N$ for fixed $N$.  To ensure that $T_{\text{Exp}}^N=\infty$ a.s., a $\ell^1$ uniform bound in ${\bf (A1)}$ would  have been enough. We observe that these assumptions allow 
 non bounded individual death or motion rate. For instance, the
 death rate    may tend to infinity with respect to the size of the population due to competition. These assumptions also ensure that the following size dependent  growth matrix  $A({\bf z})=(A_{x,y}({\bf z}))_{x,y\in \X}$ is well defined : 
 $$A_{x,y}({\bf z})=\sum_{{\bf k}\in \mathcal Z} \tau_{{\bf k}}(x,{\bf z}){\bf k}_y\, -\tau(x,{\bf z})$$
 for ${\bf z} \in \R_+^{\X}$ and $x,y \in \X$.\\
 .

We also assume that $A$ is locally Lipschitz : 
for any $K>0$, there exists $M$ such that 
$${\bf (A3)} \qquad   \forall  x,y\in  \mathcal X, \quad \forall  {\bf z}_1,{\bf z}_2\in \mathcal Z_K, \qquad \vert A_{x,y}({\bf z}_1)-A_{x,y}({\bf z}_2) \vert \leq M \|{\bf z}_1-{\bf z}_2 \|_1.$$
Thus ${\bf z}\rightarrow {\bf z}\, A({\bf z})$ is locally Lipschitz on $\R_+^{\mathcal X}$. Using 
$({\bf A1})$ guarantees  the non explosivity of the dynamical system associated to this vector filed.     Cauchy Lipschitz theorem  then ensures the existence and uniqueness
of the solution
$({\bf z}(t,{\bf v}))_{t\geq 0}$ of the following ordinary differential equation on $\R_+$
$${\bf z}'(t,{\bf v})={\bf z}(t,{\bf v})\, A({\bf z}(t,{\bf v})), \quad {\bf z}(0,{\bf v})={\bf v}.$$

Under these assumptions, we know that ${\bf Z}^N/N$ converges in law in $\mathbb D(\R_+,\R_+^{\X})$
to the non-random process ${\bf z}(.,{\bf v})$ and refer to Theorem 2 in Chapter 11 of  \cite{EKbook}. We are actually needing  in the proof 
a counterpart for the spine construction, see below.
Finally, we  assume
 that the limiting dynamical system does not  come too close to the extinction boundary in finite time : 
 $${\bf (A4)} \qquad   \qquad  \forall T>0, \qquad  \inf_{x\in \mathcal X, t\in [0,T]} {\bf z}_x(t,{\bf v})>0. \qquad \qquad \qquad$$
 This assumption holds for many classical population models and allows us to consider 
 functions $\psi$ which go to infinity on the boundary.\\

We are interested in the limiting   $\psi$-spine construction and consider 
  a  function $\psi$  from  $\X\times [0,\infty)^\X$  to $(0,\infty)$, such that for any $x\in \mathcal X$, $\psi_x : {\bf z} \in (0,\infty)^\X \rightarrow  \psi(x,{\bf z})$ is continuously differentiable. Besides, we assume
  that   for any $\varepsilon>0$,   there exists $L$ such that
  for any  $x\in \mathcal X$ and $\mathbf z \in (\varepsilon,1/\varepsilon)^{\mathcal X}$ and $\mathbf k\in \mathbb R_+^{\mathcal X}$, 
 \begin{align}
 \label{superLip}
 \parallel  \psi(x,\mathbf z+ \mathbf k)- \psi (x,\mathbf z) \parallel_1 \leq L \parallel \mathbf k\parallel_1.
 \end{align}
The $\psi$-spine construction is initiated with a single individual, the root $E(0)=\varnothing$, whose type $Y(0)$ is chosen as follows:
$$\P(Y(0)=x)= \frac{\psi(x, {\bf v})}{\langle {\bf v},\psi(., {\bf v})\rangle} \qquad (x\in \mathcal X). $$ 
Let us explain informally why in this section the spine construction
 is  restricted    to one single initial individual. Indeed, 
 the density dependance reduces to a deterministic effect when the size of the population goes to infinity, since the normalized process  converges to the  $\mathbf z(.,\mathbf v)$.  Like for propagation of chaos, in the large population approximation, the individuals behave independently and a (time inhomogeneous) branching property holds.  
Besides,  when the limiting object $\mathbf z(.,\mathbf v)$ converges to   an equilibrium when times goes to infinity,
this non-homogeneity actually vanishes, as discussed below.
  \\

\noindent Let us be more specific.
 The spine with type $x$ branches with the following rate at time $t$
  $$\widehat{\tau}_{\bf k}^{\, \star}(x,t,{\bf v})=
  \tau_{\bf k}(x,{\bf z}(t,{\bf v}))) \frac{\langle{\bf k},\psi(.,{\bf z}(t,{\bf v}))\rangle}{ \psi(x, {\bf z}(t,{\bf v}))},$$
 while individuals with type $x$ but the spine branch at time $t$ with rate 
 $$\widehat{\tau}_{\bf k}(x,t,{\bf v})=\tau_{\bf k}(x,{\bf z}(t,{\bf v})).$$
 We use as in Section \ref{mainsection} the Ulam Harris Neveu notation to label individuals and denote by $\mathcal A_{\star}(t)$ the tree rooted in the spine. Observe also
 that ${\bf (A4)}$ and regularity of $\psi$ ensure that $\psi(.,{\bf z}(t,{\bf v}))$ is bounded on finite time intervals. Using ${\bf (A1-A2)}$ then ensures that this spine construction is not explosive. Recall that
 $E(t)$ is the label of the spine at time $t$ and  set
\cmv{c'est surtout ici ou il faut de la continuite}
 $$\mathcal Gf(x, {\bf z})= \sum_{{\bf k}\in \mathcal Z} \tau_{{\bf k}}(x, {\bf z}) \, \langle {\bf k}-{\bf e}(x),f(.,{\bf z})\rangle+\mathcal Lf_x ( {\bf z})$$
for ${\bf z} \in (0,\infty)^{\X}$ and $x,y \in \X$, where  
  $\mathcal L$  is the adjoint  operator associated to  ${\bf z} A({\bf z})$ :
$$\mathcal Lg({\bf z})= \sum_{y,x\in \mathcal X,{\bf k}\in \mathcal Z}  {\bf z}_y\tau_{{\bf k}}(y, {\bf z})  ({\bf k}_x-\delta_x^y)\frac{\partial g}{\partial {\bf z}_x}({\bf z}),$$
where $\delta_x^y=1$ if $y=x$ and $0$ otherwise.
Using ${\bf (A1)}$ and differentiability of $\psi$, $\psi$ is in the domain of $\mathcal G$ and
  we define  $\lambda$ as
$$\lambda(x,{\bf z}) =\frac{\mathcal G\psi(x, {\bf z})}{\psi(x,{\bf z})}$$
for $x\in \mathcal X$ and ${\bf z}\in \R_+^{\mathcal X}$ and can state the result on the subtree containing the sample. More precisely, recall that  $L^N_v$ is the life length of individual $v$ in the original process
$Z^N$, $L^N_v(t)$ this life length when the process is stopped at time $t$, and  
$Z^N_v$ the type of individual $v$. 
Writing $u_0$ the ancestor of $u$ at time $0$, we set
$$\T_u^N(t)=\{ (v, L^N_v(t), Z^N_v)   :  \exists s \leq t,  (u_0,v) \in   \mathbb G^N(s)\}.$$
where $ \mathbb G^N(s)$ is the set of labels alive in $\mathcal T^N$ at time $s$.
The random tree $\T_u^N(t)$ is the  tree associated  with the ancestral lineage of $u$ and their descendants, rooted 
in $\varnothing$. We endow the space $\mathbb T\times \mathcal X$
with a $\ell_1$  topology  on the collection of labels together with their life lengths and types, defined as follows. Recall that a finite tree   $\mathfrak t=\{(v,\ell_v,z_v) :  v \in \mathcal U(\mathfrak t)\}$ of $\mathbb T$ is a collection $\mathcal U(\mathfrak t)\subset \mathcal U$ of labels corresponding to individuals $v\in \mathcal U(\mathfrak t)$
of the population with time lenght $\ell_v$ and type $z_v$.
For  
two  trees $\mathfrak t=\{(v,\ell_v,z_v) :  v\in  \mathcal U(\mathfrak t)\}$ and $\mathfrak t'=\{(v,\ell'_v,z'_v) :  v \in \mathcal U(\mathfrak t')\}$. 
We write $\mathfrak t \Delta \mathfrak t':=\mathcal U(\mathfrak t)\, \Delta  \, \mathcal U(\mathfrak t')$ the set of labels of $\mathcal U$ in one tree but not in the other and $\mathfrak t \cap \mathfrak t' :=\mathcal U(\mathfrak t)\cap \mathcal U(\mathfrak t')$ the set of labels  in both.
 We consider the following  distance on trees
$$d(\mathfrak t, \mathfrak t')=\# (\mathfrak t \Delta \mathfrak t' )+\sum_{u \in \mathfrak t \cap \mathfrak t' } (\vert \ell_u-\ell_u'\vert + \vert {\bf k}_u-{\bf k}_u'\vert)$$
and endow $\mathbb T$ with this distance and $\mathbb T\times \mathcal X$ with the product topology.
\begin{proposition} \label{limsystdyn} Assume that {\bf (A1-2-3-4)} hold.
 Let $t\geq 0$ and $U^N(t)$ be a uniform choice among individuals of
 $\T^N(t)$ alive at time $t$. Then for any $F$ continuous and positive from $\mathbb T\times \mathcal X$ to $\R_+$,
$$\lim_{N\rightarrow \infty} \E_{{\bf x}_N}\left(F(\T_{U^N(t)}^N(t),U^N(t)) \right)=\, \E\left(\frac{\exp\left(\int_0^t \lambda (Y(s),{\bf z}(s,{\bf v}))\, ds\right)}{{\psi(Y(t),{\bf z}(t,{\bf v})) \, \| {\bf z}(t,{\bf v}) \|_1} }\, F(\mathcal A_{\star}(t),E(t) )\right).$$
\end{proposition}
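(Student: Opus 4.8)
The plan is to transfer Theorem~\ref{main} to the $N$-th model through a rescaled transform and then to let $N\to\infty$. For each $N$ I would take $\psi^N(x,{\bf z}):=\psi(x,{\bf z}/N)$ on $\overline{\mathcal Z}$ (with an innocuous truncation near the boundary of $\R_+^{\X}$, irrelevant below thanks to {\bf (A4)} and the concentration of ${\bf Z}^N/N$), and check, using {\bf (A1)}, the differentiability of $\psi$ and \eqref{superLip}, that $\psi^N$ lies in the domain of the operator $\mathcal G^N$ attached to the rates $\tau^N_{\bf k}$, so that Theorem~\ref{main} applies. I would apply it to the functional $G(\mathfrak t,u):=F(\mathfrak t_u,u)$, where $\mathfrak t_u$ denotes the subtree of $\mathfrak t$ spanned by the time-$0$ ancestor of $u$ and its descendants, re-rooted at $\varnothing$; then $G(\T^N(t),U^N(t))=F(\T^N_{U^N(t)}(t),U^N(t))$ and $G(\mathcal A^N(t),E^N(t))=F(\mathcal A^N_{E^N(t)}(t),E^N(t))$, the latter being the re-rooted subtree spanned by $E^N(0)$ and its progeny in the $\psi^N$-spine construction. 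Since {\bf (A1)}--{\bf (A2)} force $T^N_{\mathrm{Exp}}=\widehat{T}^N_{\mathrm{Exp}}=+\infty$ a.s., Theorem~\ref{main} yields the exact identity (with the convention that the left-hand integrand vanishes on $\{\mathbb G^N(t)=\varnothing\}$, a set of probability $o(1)$ because ${\bf Z}^N(t)/N\to{\bf z}(t,{\bf v})>0$ by {\bf (A4)})
\[
\E_{{\bf x}_N}\!\left(F(\T^N_{U^N(t)}(t),U^N(t))\right)=\left\langle\tfrac{[N{\bf v}]}{N},\psi\Big(\cdot,\tfrac{[N{\bf v}]}{N}\Big)\right\rangle\,\E_{{\bf x}_N}\!\left(\frac{\exp\!\big(\int_0^t\lambda^N(Y^N(s),{\bf \Xi}^N(s))\,ds\big)}{\psi\big(Y^N(t),{\bf \Xi}^N(t)/N\big)\,\|{\bf \Xi}^N(t)/N\|_1}\,F\big(\mathcal A^N_{E^N(t)}(t),E^N(t)\big)\right),
\]
where $\lambda^N=\mathcal G^N\psi^N/\psi^N$, and where I used $\langle[N{\bf v}],\psi^N(\cdot,[N{\bf v}])\rangle=N\langle[N{\bf v}]/N,\psi(\cdot,[N{\bf v}]/N)\rangle$ together with $\|{\bf \Xi}^N(t)\|_1=N\|{\bf \Xi}^N(t)/N\|_1$ to cancel the powers of $N$.

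Next I would prove the convergence of the rescaled $\psi^N$-spine construction. First, ${\bf \Xi}^N/N$ still converges in law in $\mathbb D(\R_+,\R_+^{\X})$ to the deterministic flow ${\bf z}(\cdot,{\bf v})$: off the spine the rates $\tau_{\bf k}(y,{\bf z}/N)\,\psi(x,({\bf z}-{\bf e}(y)+{\bf k})/N)/\psi(x,{\bf z}/N)$ converge, by \eqref{superLip}, to $\tau_{\bf k}(y,{\bf z}/N)$, while the size-biased spine triggers only $O(1)$ branching events on $[0,t]$ (by {\bf (A1)}), each shifting ${\bf \Xi}^N/N$ by $O(1/N)$, hence is asymptotically negligible, and the drift is again ${\bf z}\mapsto{\bf z}A({\bf z})$; this is the counterpart of Theorem~2, Chapter~11 of \cite{EKbook} announced above. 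Second, since the limiting profile is deterministic and all rates are bounded on $[0,t]$ by {\bf (A1)}--{\bf (A2)}--{\bf (A4)}, the triple $\big(\mathcal A^N_{E^N(t)}(t),E^N(t),{\bf \Xi}^N(\cdot)/N\big)$ converges in law, in $\mathbb T\times\mathcal X\times\mathbb D([0,t],\R_+^{\X})$, to $\big(\mathcal A_\star(t),E(t),{\bf z}(\cdot,{\bf v})\big)$: along the flow the spine branches at rate $\widehat{\tau}^{\star}_{\bf k}(x,s,{\bf v})$, its off-spine progeny at rate $\tau_{\bf k}(x,{\bf z}(s,{\bf v}))$, the new spine is chosen with probability $q_y({\bf k},{\bf z}(s,{\bf v}))$, and the initial spine type converges, $\P(Y^N(0)=x)\to{\bf v}_x\psi(x,{\bf v})/\langle{\bf v},\psi(\cdot,{\bf v})\rangle=\P(Y(0)=x)$ --- which is exactly the law of $(\mathcal A_\star,E)$. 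As $\mathcal A_\star(t)$ a.s.\ has finitely many nodes, pairwise distinct branching times, and no branching at time exactly $t$, the label set, life-lengths and offspring vectors of $\mathcal A^N_{E^N(t)}(t)$ converge, so the continuity of $F$ gives $F(\mathcal A^N_{E^N(t)}(t),E^N(t))\to F(\mathcal A_\star(t),E(t))$ in law.

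It then remains to pass to the limit in the exponential weight. Writing ${\bf z}=N\mathbf w$ and expanding $\mathcal G^N\psi^N(x,N\mathbf w)$ term by term: the reproduction term tends to $\sum_{\bf k}\tau_{\bf k}(x,\mathbf w)\langle{\bf k}-{\bf e}(x),\psi(\cdot,\mathbf w)\rangle$ (using {\bf (A1)} and \eqref{superLip}), while the $O(N)$ parts of the two remaining (transport) terms combine, through a first-order Taylor expansion of $\psi(x,\mathbf w+({\bf k}-{\bf e}(y))/N)$ and the differentiability of $\psi$, into $\mathcal L\psi_x(\mathbf w)$; hence $\lambda^N(x,N\mathbf w)\to\mathcal G\psi(x,\mathbf w)/\psi(x,\mathbf w)=\lambda(x,\mathbf w)$, locally uniformly on $\{\mathbf w:\mathbf w_{x'}>0\ \forall\,x'\}$. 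Since $\lambda^N$ is bounded uniformly in $N$ on a fixed compact neighbourhood of the curve $\{{\bf z}(s,{\bf v}):s\le t\}$ (again by {\bf (A1)}--{\bf (A2)}--{\bf (A4)}), which ${\bf \Xi}^N/N$ does not leave with probability tending to $1$, dominated convergence combined with the previous step gives $\int_0^t\lambda^N(Y^N(s),{\bf \Xi}^N(s))\,ds\to\int_0^t\lambda(Y(s),{\bf z}(s,{\bf v}))\,ds$ in law; likewise $\langle[N{\bf v}]/N,\psi(\cdot,[N{\bf v}]/N)\rangle\to\langle{\bf v},\psi(\cdot,{\bf v})\rangle$, $\psi(Y^N(t),{\bf \Xi}^N(t)/N)\to\psi(Y(t),{\bf z}(t,{\bf v}))$ and $\|{\bf \Xi}^N(t)/N\|_1\to\|{\bf z}(t,{\bf v})\|_1$, all bounded away from $0$ and $\infty$ on $[0,t]$. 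Feeding these convergences into the identity of the first paragraph and using the boundedness of $F$ to interchange limit and expectation yields the claim for bounded continuous $F$, and then for a general continuous $F$ by truncation together with the above uniform weight bounds.

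The step I expect to be the main obstacle is this joint convergence together with the identification $\lambda^N\to\lambda$: the delicate point is that $\lambda^N$ is built from a \emph{discrete} difference operator acting on $\psi^N$, whereas $\lambda$ features the \emph{differential} operator $\mathcal L$, so the exact $O(N)$ cancellation in the transport terms --- where {\bf (A1)}, the differentiability of $\psi$ and \eqref{superLip} are all used simultaneously --- and its uniformity along the random trajectory $(Y^N(s),{\bf \Xi}^N(s)/N)_{s\le t}$ need to be handled with care; coupling the population limit with the limit of the subtree around the spine (they live on the same probability space, and the joint law simplifies only because the limiting profile is deterministic) is the other point requiring attention.
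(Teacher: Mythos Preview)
Your proposal is correct and follows essentially the same route as the paper: apply Theorem~\ref{main} with $\psi^N(x,{\bf z})=\psi(x,{\bf z}/N)$ to obtain the exact identity, prove ${\bf \Xi}^N/N\to{\bf z}(\cdot,{\bf v})$ by adapting the density-dependent limit theorem of \cite{EKbook}, show $\lambda^N(x,N\mathbf w)\to\lambda(x,\mathbf w)$ locally uniformly by Taylor-expanding the transport part, and pass to the limit in the weight and in $F$ via a coupling/continuity argument for the subtree around the spine. The only refinement the paper adds that you leave implicit is the explicit $\sqrt{N}$ truncation in the $\lambda^N\to\lambda$ step (handling $\|{\bf k}\|_1\le\sqrt{N}$ by differentiability and $\|{\bf k}\|_1>\sqrt{N}$ by \eqref{superLip} and {\bf (A1)}), which is exactly what makes the uniformity you need go through.
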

This result  can be extended to finite multiple sampling  at time $t$ with independent construction started at initial time. Indeed, in this large population approximation and finite time horizon, the different samples at time $t$ come from different original individuals and  behave independently.
We can more generally consider  a finite number of initial individuals in the description.  Considering an infinite number of initial individuals should
lead  to change the topology for convergence. 
Besides, relaxing the $\ell^2$ uniform bound of ${(\bf A1)}$ should be interesting. Keeping 
the $\ell^1$ uniform bound would give a continuous limiting population process with potential infinite rate of branching along the spine (and the uniform sampling). Considering even larger jumps would give a stochastic limit and more complex spinal constructions. It is another interesting direction. 
\\

Let us prepare the proof of Proposition \ref{limsystdyn}.
Following Section \ref{mainsection}, we write
$(\mathcal A^N,E^N)$ the $\psi_N$-spine construction  associated to $\T^N$, with 
$$\psi_N(x,{\bf z})=\psi(x,{\bf z}/N)$$
for $x\in \mathcal X$ and ${\bf z}\in (0,\infty)^{\mathcal X}$ and initial condition
${\bf x}_N$.  Function $\psi_N$ is extended to the  space $\mathcal X\times \R_+^{\mathcal X}$
by setting $\psi_N=1$ on the boundary of $\mathcal X\times \R_+^{\mathcal X}$. We introduce 
$$\lambda^N(x,{\bf z}) =\frac{\mathcal G^N\psi_N(x, {\bf z})}{\psi_N(x,{\bf z})}$$
on $ \mathcal X \times \R_+^{\mathcal X}$, where
$$\mathcal L^N g({\bf z})=\sum_{y\in \mathcal X,{\bf k} \in \mathcal Z} {\bf z}_y\tau_{{\bf k}}(y,{\bf z}/N)\left( g({\bf z}+ {\bf k}-{\bf e}(y))
-g({\bf z})\right)$$
and
\Bea
\mathcal G^Nf(x,{\bf z})
&=& \sum_{{\bf k}\in \mathcal Z} \tau_{{\bf k}}(x,{\bf z}/N) \, \langle {\bf k}-{\bf e}(x),f(.,{\bf z}+{\bf k}-{\bf e}(x))\rangle+\mathcal L^Nf_x ({\bf z}).
\Eea
Theorem \ref{main} yields
\begin{align}
 &\E_{{\bf x}_N}\left( \Ind{ \mathbb G^N(t)\ne \varnothing} \, F(\T^N(t), U^N(t))  \right)=\nonumber\\
 &\qquad \qquad \langle  [N{\bf v}]/N,\psi(., [N{\bf v}]/N)\rangle \, \E_{{\bf x}_N}\left(G_N(\A^N(t),E^N(t))\right),\label{idpourcv}
\end{align}
where
\begin{align*}
G_N(\A^N(t),E^N(t))&=\frac{e^{\int_0^t \lambda^N(Y^N(s),{\bf \Xi}^N(s))ds}}{\psi(Y^N(t), {\bf \Xi}^N(t)/N) \, \| {\bf  \Xi}^N(t) /N\|_1} \, F(\A^N(t),E^N(t)).
\end{align*}
Roughly  speaking, all the quantities involved converge as $N\rightarrow \infty$.  
The process ${\bf \Xi}^N$ which counts the types of individuals in the $\psi_N$-spine construction converges to the same limit  as ${\bf Z}^N$. Indeed, 
when $N$ goes to infinity, Assumption ({\bf A1}) guarantees that there is no jump of order $N$ and the regularity of  $\psi$  ensures that 
$$\lim_{N\rightarrow \infty} \frac{\psi(x, ([N{\bf z}]-{\bf k}+1)/N)}{\psi(x, [N{\bf z}]/N)}=1.$$
Thus the contribution of the spine vanishes in the large population limit, despite the biased rate.  Besides, at a macroscopic level,  the other individuals behave as in the original process. We can now turn to the proof.
\cmv{Ecrire les $q$ et les $\beta$}
\begin{proof}[Proof of Proposition \ref{limsystdyn}] 
First, following  the proof of Theorem 2 in Chapter 11 of  \cite{EK}, we obtain that
 the sequence of process  $({\bf \Xi}^N)_N$ converges in law in $\mathbb D(\R_+,\R_+^{\X})$
to $({\bf z}(t,{\bf v}))_{t\geq 0}$ as $N$ tends to infinity. To adapt the proof, 
we note that ${\bf \Xi}^N$  alone is not a Markov process. One has to consider the couple  $(Y^N,{\bf \Xi}^N)_N$
but the influence of the type of the spine $Y^N$ is vanishing in computations using $\ell^2$ bound  $\bf (A1)$ and the fact the population is renormalized by $N$.  Assumptions {\bf (A1,A2,A3)} thus allow us
to get the counterpart of conditions  $(2.6),(2.7),(2.8)$ of  Theorem 2 in Chapter 11 of  \cite{EKbook}, while the initial condition converges in law by definition of the model.  \\

Now, we check that $(x,{\bf z})\rightarrow \lambda_N(x,N{\bf z})$ converges uniformly on compact sets of $\X\times (0,\infty)^{\X}$ and use a localization procedure to get the convergence in $\eqref{idpourcv}$ as $N\rightarrow\infty$. Indeed,
$$\mathcal L^N (\psi_N)_x(N{\bf z})=N\sum_{y\in \mathcal X,{\bf k} \in \mathcal Z} {\bf z}_y\tau_{{\bf k}}(y,{\bf z})\left( \psi(x,{\bf z} + ({\bf k}-{\bf e}(y))/N)
-\psi(x,{\bf z})\right).$$
Since $\psi_x$ is continuously differentiable on $(0,\infty)^{\mathcal X}$ and using ${\bf (A1)-(A2)}$, 
\begin{align*}
N\sum_{y\in \mathcal X,\|{\bf k} \|_1 \leq \sqrt{N} } {\bf z}_y\tau_{{\bf k}}(y,{\bf z})
\bigg\vert \psi\left(x,{\bf z} + \frac{{\bf k}-{\bf e}(y)}{N}\right)-\psi(x,{\bf z})-\sum_{y'\in \mathcal X} \frac{{\bf k}_{y'}-\delta_y^{y'}}{N} \frac{\partial \psi_x}{\partial {\bf z}_{y'}}({\bf z}) \bigg\vert 
\end{align*}
tends to $0$ as $N\rightarrow \infty$, uniformly for ${\bf z}\in (\varepsilon,1/\varepsilon)^{\mathcal X}$, where $\varepsilon \in (0,1)$ is fixed. 
Besides,  using \eqref{superLip} and  ({\bf A1}),
\begin{align*}
&N\sum_{y\in \mathcal X,\|{\bf k} \|_1 > \sqrt{N}} {\bf z}_y
\tau_{{\bf k}}(y,{\bf z})\bigg\vert \psi\left(x,{\bf z} + \frac{{\bf k}-{\bf e}(y)}{N}\right)-\psi(x,{\bf z})\bigg\vert\\
&\qquad \qquad  \qquad \qquad  \qquad \qquad \qquad   \leq L \varepsilon^{-1} \sum_{y\in \mathcal X, \|{\bf k} \|_1 > \sqrt{N}} (\|{\bf k} \|_1+1) \tau_{{\bf k}}(y,{\bf z})
\stackrel{N\rightarrow \infty}{\longrightarrow} 0,
\end{align*}
uniformly for ${\bf z}\in (\varepsilon,1/\varepsilon)^{\mathcal X}$. Recalling
the definition of $\lambda$ and controlling the terms for $ \|{\bf k} \|_1 > \sqrt{N}$ in $\lambda$
as above with $({\bf A1})$ ensures that
 for any $\varepsilon>0$,
$$\sup_{x\in \mathcal X ,{\bf z}\in (\varepsilon,1/\varepsilon)^{\mathcal X}}
\vert  \lambda^N(x,N{\bf z})-\lambda(x,{\bf z})\vert \stackrel{N\rightarrow \infty}{\longrightarrow} 0.$$
Using the convergence of ${\bf \Xi}^N$ to ${\bf z}(.,{\bf v})$  in  $\mathbb D(\R_+,\R_+^{\X})$ and $({\bf A4})$, 
\eqref{idpourcv} yields
\begin{align*}
& \lim_{N\rightarrow \infty} \big\vert
 \E_{{\bf x}_N}\left( \Ind{ \mathbb G^N(t)\ne \varnothing} \, F(\T^N_{U^N(t)}(t), U^N(t))  \right)
\\
&\qquad \qquad  \qquad -  \langle  [N{\bf v}]/N,\psi(., [N{\bf v}]/N)\rangle \, \E_{{\bf x}_N}\left(H(\A^N(t),E^N(t))\right)\big\vert=0,
\end{align*}
for $F$ continuous,  positive and bounded, where 
\begin{align*}
H(\A^N(t),E^N(t))&=\frac{e^{\int_0^t \lambda(Y^N(s),{\bf z}(t,{\bf v})))ds}}{\psi(Y^N(t), {\bf z}(t,{\bf v})) \, \| {\bf z}(t,{\bf v}) \|_1} \, F(\A^N_{\star}(t),E^N(t))
\end{align*}
and $\A^N_{\star}$ is the tree $\A^N$
where we only keep the tree rooted
in the initial spine individual. The conclusion can be achieved by a coupling argument, 
since the first time when one individual
of $\A^N_{\star}$ has an offspring of size greater than $\sqrt{N}$ tends to infinity. Thus 
the individual branching  rates of $\A^N_{\star}$ converge uniformly to the rates
of $\A_{\star}$, using the same  localization as above to keep
 the process ${\bf \Xi}^N$ in compact sets excluding boundaries.
\end{proof}


In general and as in the previous subsection, one may expect to solve the limit eigenproblem :
\begin{align*}
&\sum_{{\bf k}\in \mathcal Z} \tau_{{\bf k}}(x, {\bf z}) \, \langle {\bf k}-{\bf e}(x),\psi(.,{\bf z})\rangle\\
&\qquad \qquad+ \sum_{y,x\in \mathcal X,{\bf k}\in \mathcal Z}  {\bf z}_y\tau_{{\bf k}}(y, {\bf z})  ({\bf k}_x-\delta_x^y)\frac{\partial \psi_x}{\partial {\bf z}_x}({\bf z})=\lambda (x,{\bf z})\psi(x,{\bf z})
\end{align*}
for any $x\in \X$ and ${\bf z}\in \R_+^{\X}$ such that ${\bf z}_x>0$.
One also expects that uniqueness of positive normalized solution holds under irreducibility conditions.
We only illustrate the result with two simple and more explicit examples.
In one dimension $\mathcal X=\{x_1\}$, taking $\psi(z)=1/z$ is reminiscent  from the previous section for single type models.  It yields $\lambda=0$
and as $N\rightarrow \infty$, 
$(\T_{U(t)}^N(t),U^N(t))$ initiated in ${\bf x}_N$ converges in law 
to $\left(\mathcal A(t),E(t) )\right)$ as $N\rightarrow \infty$.\\
\cmv{Un cas non equilibre dim 2 ?}
Second, when the population process is at equilibrium, we can also be more explicit.
More precisely, assume that  there exists ${\bf z}_*\in\R_+^{\X}$ such that 
$${\bf z}_*A({\bf z}_*)=0.$$ 
Then ${\mathcal L}f_x ({\bf  z}_*)=0$ for any $f$ and $x\in \X$. 
The spectral problem $\mathcal G\psi=0$ simplifies since the influence  of
 the population on the spinal tree is constant. The solution of the problem is then given by $\psi(x,{\bf z})=\varphi(x)$
where $\varphi : \X\rightarrow (0,\infty) $ is solution of
$$\forall x\in  \mathcal X, \qquad \sum_{{\bf k}\in \mathcal Z} \tau_{{\bf k},*}(x) \, \langle {\bf k}-{\bf e}(x),\varphi\rangle=0,$$
and
$$ \tau_{{\bf k},*}(x)=  
  \tau_{\bf k}(x,{\bf z}_*) \frac{\langle {\bf k},\varphi(.) \rangle}{ \varphi(x)}.$$
    It means that
$$\forall x\in  \mathcal X, \qquad \sum_{y\in \mathcal X} \varphi(y) A_{y,x}({\bf z}_*)=
0.$$
Existence and uniqueness of positive $\phi$ under irreducibility assumption
is then again a consequence of Perron Frobenius theorem and we recover 
in that case  the spine construction  for
critical multitype Galton Watson process proposed in \cite{KLPP, GB}. In this vein, let us refer to \cite{CHMT}, for a more complex model in infinite dimension motivated by
 adaptation to environmental change, which uses the branching limiting structure and also describes the  backward process appearing 
 in sampling.\\

{\bf Acknowledgement.}
This work  was partially funded by the Chair "Mod\'elisation Math\'ematique et Biodiversit\'e" of VEOLIA-Ecole Polytechnique-MNHN-F.X and ANR ABIM 16-CE40-0001. The idea of this project started during IX Escuela de Probabilidad y Procesos Estoc\'asticos, in Mexico in 2018. The author is grateful to the organizers for the invitation. The author is  very grateful to Simon Harris, Andreas Kyprianou and Bastien Mallein for stimulating discussions on this topic and in particular on the link with $h$-transform. The authors also thanks
Bertrand Cloez, Pierre Gabriel, Aline Marguet, Sylvie M\'el\'eard for related discussions and motivations.

\end{document}